\documentclass{article}

\usepackage{amsmath, amsthm, amscd, amssymb, xypic}

\newcommand{\R}{\mathbf R}
\renewcommand{\H}{\mathcal H}
\newcommand{\Q}{\mathbf Q}
\newcommand{\C}{\mathbf C}
\newcommand{\Z}{\mathbf Z}
\newcommand{\B}{\mathcal B}
\newcommand{\N}{\mathfrak N}
\newcommand{\F}{\mathcal F}

\newcommand{\p}{\mathfrak p}
\renewcommand{\phi}{\varphi}

\renewcommand{\k}{\textbf{\textit k}}
\newcommand{\m}{\textbf{\textit m}}
\renewcommand{\a}{\mathfrak a}

\newcommand{\eps}{\epsilon}
\renewcommand{\O}{\mathcal O}

\newcommand{\A}{\mathbf A}
\newcommand{\bs}{\backslash}

\renewcommand{\d}{\mathfrak d}
\renewcommand{\1}{\mathbf 1}

\newcommand{\bmx}{\left( \begin{matrix}}
\newcommand{\emx}{\end{matrix} \right)}

\newcommand{\I}{\tilde I}
\renewcommand{\c}{\mathfrak c}
\newcommand{\calA}{\mathcal A}

\DeclareMathOperator{\GL}{GL}  
\DeclareMathOperator{\Gal}{Gal} \DeclareMathOperator{\tr}{tr} 
  
  \DeclareMathOperator{\vol}{vol}

 \DeclareMathOperator{\PGL}{PGL} 
  
\DeclareMathOperator{\iso}{\stackrel{\sim}{\longrightarrow}} \DeclareMathOperator{\Sym}{Sym}
 \DeclareMathOperator{\Tr}{Tr} 
 \DeclareMathOperator{\Pic}{Pic} \DeclareMathOperator{\Ram}{Ram} \DeclareMathOperator{\JL}{JL}

\newtheorem{theorem}{Theorem}[section]
\newtheorem{lemma}[theorem]{Lemma}
\newtheorem{corollary}[theorem]{Corollary}

\newtheorem{proposition}[theorem]{Proposition}

\newtheorem{fact}[theorem]{Fact}

%opening

\title{Averages of central $L$-values of Hilbert modular forms with an application to subconvexity}

\author{Brooke Feigon\footnote{Email address: \texttt{bfeigon@math.toronto.edu}}\\Institute for Advanced
Study\\Einstein Drive\\Princeton, NJ 08450 \and David Whitehouse\footnote{Email address:
\texttt{dw@math.mit.edu}}\\Institute for Advanced Study\\Einstein Drive\\Princeton, NJ 08450}

\begin{document}

\maketitle

\begin{abstract}
We use the relative trace formula to obtain exact formulas for central values of certain twisted quadratic base change $L$-functions averaged over Hilbert modular forms of a fixed weight and level. We apply these formulas to the subconvexity problem for these $L$-functions. We also establish an equidistribution result for the Hecke eigenvalues weighted by these $L$-values.
\end{abstract}

\tableofcontents

\section{Introduction}

\subsection{Statement of results}

In this paper we use the relative trace formula, together with period formulas originating in work of Waldspurger \cite{waldspurger:1985}, to study central values of $L$-functions associated to Hilbert modular forms. Let $F$ be a totally real number field and let $\Sigma_\infty$ denote the set of archimedean places of $F$. Given an ideal $\N$ of $\O_F$ and a tuple of positive integers $\k = (k_v : v\in\Sigma_\infty)$ we let $\F(\N, 2\k)$ denote the set of cuspidal automorphic representations of $\PGL(2, \A_F)$ which are of exact level $\N$ and holomorphic of weight $2\k$. We recall that each $\pi \in \F(\N, 2\k)$ may be identified with a normalized holomorphic Hilbert modular newform of level $\N$, weight $2\k$ and trivial nebentypus which is an eigenfunction for all the Hecke operators.

Let $E$ be a quadratic extension of $F$ and for each $\pi \in \F(\N, 2\k)$ let $\pi_E$ denote the base change of $\pi$ to an automorphic representation of $\PGL(2, \A_E)$. Given a unitary character $\Omega$ of the idele class group $E^\times \bs \A_E^\times$ of $E$ one may consider the completed $L$-function $L(s, \pi_E\otimes\Omega)$ which satisfies a functional equation relating $s$ to $1 - s$. We note that if $\sigma_\Omega$ denotes the induction of $\Omega$ to an automorphic representation of $\GL(2, \A_F)$ then,
\[
L(s, \pi_E\otimes\Omega) = L(s, \pi \times \sigma_\Omega).
\]
The object of study in this paper is $L(1/2, \pi_E\otimes\Omega)$, the central value of this $L$-function. In particular we prove an explicit formula for $L(1/2, \pi_E\otimes\Omega)$ as one averages over $\pi$ of a fixed weight and level. As an application of this formula we establish subconvexity as $\pi$ and $\Omega$ vary in a certain range and prove an equidistribution result for the Hecke eigenvalues of such $\pi$.

Throughout this paper we make the following assumptions on the data introduced above.
\begin{itemize}
  \item $E$ is an imaginary quadratic extension of $F$,
  \item $\N$ is squarefree, each prime $\p$ dividing $\N$ is inert and unramified in $E$ and the number of primes dividing $\N$ has the same parity as the degree of the extension $[F:\Q]$, and
  \item the character $\Omega$ is trivial when restricted to $\A_F^\times$, unramified at the places of $E$ above $\N$, and at each archimedean place $\Omega_v$ has weight $m_v < k_v$.
\end{itemize}
See Section \ref{summary} below for a discussion on the relevance and seriousness of these assumptions.

The results of this paper are all derived from an exact formula for
\[
\sum_{\pi\in\F(\N, 2\k)} \frac{L(1/2, \pi_E\otimes\Omega)}{L(1, \pi, Ad)} \hat{f}_\p(\pi_\p),
\]
obtained via the relative trace formula. Here $L(s, \pi, Ad)$ is the adjoint $L$-function of $\pi$ and $\hat{f}_\p$ denotes a Hecke operator at $\p \nmid \N$.

When $\N$ is large with respect to $E$, $c(\Omega)$ (the conductor of $\Omega$) and $f_\p$ our formula simplifies considerably. The simplest version of it is given by,

\begin{theorem}
Assume that not all $k_v = 1$ and $\N$ has absolute norm larger than $d_{E/F} c(\Omega)^{h_F}$. Then,
\begin{align*}
\frac{2^{[F:\Q]}}{|\N|} \binom{2\k - 2}{\k + \m - 1} \sum_{\pi\in\F(\N, 2\k)}
\frac{L(1/2, \pi_E\otimes\Omega)}{L(1, \pi, Ad)} = {4 {|\Delta_F|}^{\frac{3}{2}}L^{S(\Omega)}(1, \eta)}
\end{align*}
where $S(\Omega)$ denotes the set of places of $F$ above which $\Omega$ is ramified and $\eta$ is the quadratic idele class character of $F$ associated to $E$.
\label{thmintro1}
\end{theorem}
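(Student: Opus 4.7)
The plan is a relative trace formula comparison on the quaternion algebra $D/F$ that is ramified exactly at $\Sigma_\infty$ and at the primes dividing $\N$. Such a $D$ exists, and $E$ embeds into it as a maximal subfield, precisely because of the parity and inertness hypotheses imposed on $(E, \N)$. By the Waldspurger--Jacquet--Ichino central value formula, for each $\pi \in \F(\N, 2\k)$ the quotient $L(1/2, \pi_E \otimes \Omega)/L(1, \pi, \Ad)$ is proportional to $|P_\Omega(\phi)|^2/\langle\phi, \phi\rangle$, where $\phi$ is a vector in the Jacquet--Langlands transfer $\pi^D$ and $P_\Omega$ is the $\Omega$-twisted toric period along the embedded $T = E^\times/F^\times \hookrightarrow D^\times/F^\times$; moreover this ratio factors as an Eulerian product of local matrix coefficient integrals $\alpha_v(\phi_v, \phi_v)$ twisted by $\Omega_v$.

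Next I would choose a test function $f = \prod_v f_v$ on $D^\times(\A_F)/\A_F^\times$ and form its automorphic kernel $K_f(x,y)$. At $v \in \Sigma_\infty$ take $f_v$ a matrix coefficient of the weight $2k_v$ discrete series of $D_v^\times$; at each $\p \mid \N$ take $f_\p$ adapted to the level (e.g.\ the characteristic function of a suitable Iwahori-type subgroup of $\O_{D_\p}^\times$); at remaining finite places take the unit of the Hecke algebra. Pairing $K_f$ against $\Omega \otimes \overline\Omega$ over $[T] \times [T]$, the spectral expansion, combined with the Waldspurger factorization above, recovers exactly the weighted sum on the left-hand side of Theorem~\ref{thmintro1} (up to explicit global constants to be tracked). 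The geometric expansion decomposes the same integral as a sum over $T(F) \bs D^\times(F) / T(F)$, into an identity contribution plus regular orbital integrals indexed by the invariant of the non-trivial double cosets.

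The decisive step, and the one I expect to be the main obstacle, is to show that the hypothesis $|\N| > d_{E/F} c(\Omega)^{h_F}$ forces every regular orbital integral to vanish. At each $\p \mid \N$ the Iwahori support of $f_\p$ (together with $\p$ being inert in $E$ and $\Omega_\p$ unramified) restricts the $\p$-adic invariant of any contributing $\gamma$, and the global condition on $|\N|$ then makes these local restrictions jointly inconsistent with $F$-rationality; the $c(\Omega)^{h_F}$ factor accounts for the ambiguity in choosing generators of fractional ideals via the class number, and $d_{E/F}$ for the local ramification of $E$. Granting this vanishing, only the identity orbit survives, and it evaluates as a product of local integrals: at archimedean $v$ an explicit $\SU(2)$ matrix coefficient computation produces the factor $\binom{2k_v - 2}{k_v + m_v - 1}$ (non-vanishing because $m_v < k_v$); at $\p \mid \N$ one obtains an explicit factor involving $|\N|$ from the local volume; at the remaining finite places, unramified unfolding of the toric period yields $L_v(1, \eta_v)$, whose product over $v \notin S(\Omega)$ is $L^{S(\Omega)}(1, \eta)$; finally, the Tamagawa measure normalizations together with the discrete series formal degrees combine to produce the overall constant $4 |\Delta_F|^{3/2}$.
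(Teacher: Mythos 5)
Your overall strategy matches the paper's: a relative trace formula on $G = D^\times/Z$ with $D$ ramified at $\Sigma_\infty$ and the primes dividing $\N$, with a matrix-coefficient test function at infinity, the Waldspurger/Martin--Whitehouse explicit period formula to pass from periods to $L$-values on the spectral side, and a lattice-point count to show the regular orbital integrals vanish once $|\N| > d_{E/F}\,c(\Omega)^{h_F}$. You have correctly identified the decisive step. However, there are three substantive gaps that you would need to close.

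First, your spectral side is incomplete: $L^2(G(F)\bs G(\A_F))$ also contains the one-dimensional representations $\delta \circ N_D$, and your test function does \emph{not} automatically annihilate them. At the archimedean places $\delta_v \circ N_{D_v}$ is trivial, so a matrix coefficient of $\pi'_{2k_v}$ kills it only when $2k_v > 2$; at finite places $\delta_v \circ N_{D_v}$ is $R_v^\times$-fixed whenever $\delta_v$ is unramified. This residual contribution is exactly what the hypothesis ``not all $k_v = 1$'' is there to eliminate, and your proof never uses that hypothesis. Without it, the spectral expansion is not merely the weighted cuspidal sum.

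Second, on the geometric side the double coset decomposition $E^\times \bs D^\times / E^\times$ has \emph{two} irregular orbits, not one: the identity and the coset of the element $\bigl(\begin{smallmatrix}0&\eps\\1&0\end{smallmatrix}\bigr)$, whose stabilizer in $T\times T$ is all of $T$ acting twisted by the Galois involution. This ``$\infty$'' orbit is not a regular orbit and is not controlled by the same invariant bound; one must show separately that its local factor at each $v\mid\N$ vanishes because the element has odd reduced-norm valuation, hence lies outside $Z_v R_v^\times$. Your ``identity contribution plus regular orbital integrals'' phrasing skips this.

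Third, ``take the unit of the Hecke algebra at the remaining finite places'' does not pin down the data that make the period formula \emph{exact}. At places $v\in S(\Omega)$ where $\Omega$ ramifies, the maximal order $R_v$ must satisfy $R_v \cap E_v = \O_{F_v} + \varpi_v^{n(\Omega_v)}\O_{E_v}$ (the Gross--Prasad test vector condition), otherwise the local toric integral $\alpha_v$ in the Waldspurger factorization vanishes and you recover $0 = 0$ rather than the desired constant. This choice also feeds directly into the vanishing radius $d_{E/F}\,c(\Omega)^{h_F}$, since the local support constraint at $v$ finite away from $\N$ reads $v(1-\xi) \leq v(\d_{E/F}\,\c(\Omega))$ (your Lemma~\ref{lemRe}-type statement), so without the adapted order the exponent of $c(\Omega)$ in the threshold would not come out.
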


For the full formula see Section \ref{mainresults} and for any undefined notation see Section \ref{notation}.

Over $\Q$ we can express this formula more classically. We identify $\F(N, 2k)$ with the set of normalized modular newforms of level $N$ and weight $2k$ which are Hecke eigenforms. Let $E = \Q(\sqrt{-d})$ be an imaginary quadratic extension of $\Q$ of discriminant $-d$. We take $\Omega$ as before and let $g_\Omega$ denote the modular form of weight $2|m| + 1$, level $d c(\Omega)$ and nebentypus $\chi_{-d}$ associated to $\Omega$. For $f \in \F(N, 2k)$ we take the Rankin-Selberg $L$-function, $L(s, f \times g_\Omega)$ which has functional equation relating $s$ to $2k + 2|m| + 1 - s$. Using the relationship between $L(1, \pi_f, Ad)$, where $\pi_f$ denotes the automorphic representation generated by $f$, and the square of the Petersson norm of $f$,
\[
(f, f) = \int_{\Gamma_0(N) \bs \H} |f(x + iy)| y^{2k} \ \frac{dx \ dy}{y^2}
\]
we can rewrite Theorem \ref{thmintro1} in the following way: Assume $\Omega$ is not quadratic, $k>1$ and $N > d c(\Omega)$, then
\[
\frac{(2k-2)!u_{-d} \sqrt{d}L_{S(\Omega)}(1, \chi_{-d})}{2\pi (4\pi)^{2k-1}}  \sum_{f\in\F(N, 2k)}
\frac{L_{fin}(k + |m| + \frac 12, f \times g_\Omega)}{(f, f)} = h_{-d}
\]
where $\chi_{-d}$ is the quadratic Dirichlet character of discriminant $-d$, $h_{-d}$ is the class number of $E$ and $u_{-d} = \# \O_E^\times/\{\pm 1\}$. See Section \ref{classical} for further details.

We note that over $\Q$ and with $\Omega$ trivial an asymptotic version of Theorem \ref{thmintro1} has been known for a while. First by Duke \cite{duke:1995} for prime level and weight 2 and then by Iwaniec, Luo and Sarnak \cite{iwaniec:2000}, \cite{iwaniec:2000a} for squarefree level and higher weight. An exact formula has been established by Michel and Ramakrishnan in the case $F=\Q$ and $\Omega$ is a character of the ideal class group of $E$. Their work uses Gross' formula together with a geometric argument in the weight two case, and the theta correspondence in higher weight; see \cite{ramakrishnan:exact}.

By an extension of Theorem \ref{thmintro1} to include Hecke operators, we obtain a result that includes a restriction at a prime $\p \nmid \N$. Let $\pi = \otimes_v \pi_v \in \F(\N, 2\k)$. We let $\{ \alpha_\p, \alpha^{-1}_\p\}$ denote the Satake parameters of $\pi_\p$ and set $a_\p(\pi) = \alpha_\p + \alpha^{-1}_\p$. We recall that $a_\p(\pi) \in [-2, +2]$ by Ramanujan's conjecture; \cite{blasius:2006}. The distribution of the $a_\p(\pi)$ has been considered by Sarnak \cite{sarnak:1987} and Serre \cite{serre:1997}. The spherical Plancherel measure on $\PGL(2, F_\p)$ is given by
$$
\mu_\p = \frac{q_\p + 1}{(q_\p^{\frac 12} + q_\p^{-\frac 12})^2 - x^2} \frac{\sqrt{4 - x^2}}{2\pi} dx
$$
on $[-2, 2]$, here $q_\p$ denotes the order of the residue field at $\p$. Serre has proven that when $F = \Q$,
$$
\{ a_\p(\pi) : \pi \in \F(\N, 2\k) \}
$$
become equidistributed with respect to $\mu_\p$ as $\N \to \infty$. We prove a variant of this result where we include a weighting by $L^\p(1/2, \pi_E\otimes\Omega)$.

\begin{theorem}
For any $J \subset [-2, +2]$ we have
\begin{align*}
&\lim_{\N \to \infty} \frac{1}{|\N|} \sum_{\substack{\pi\in\F(\N, 2\k)\\a_\p(\pi)\in J}}
\frac{L^\p(1/2, \pi_E\otimes\Omega)}{L^\p(1, \pi, Ad)}
\\
=& 4 |\Delta_F|^{\frac 32} \frac{1}{2^{[F:\Q]}} \binom{2\k - 2}{\k + \m -1}^{-1}
L^{S(\Omega)\cup\{\p\}}(1, \eta) L(2, 1_{F_\p}) \mu_\p(J).
\end{align*}
\end{theorem}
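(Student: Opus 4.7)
The plan is to derive this weighted equidistribution statement from the Hecke-operator extension of Theorem \ref{thmintro1}, promised in Section \ref{mainresults}, combined with a Stone--Weierstrass argument in the spirit of Serre.

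First, for each integer $n \geq 0$ I would take $f_{\p,n}$ to be the characteristic function of $K_\p \diag(\varpi_\p^n, 1) K_\p$ in $\PGL(2, F_\p)$, so that its Satake transform satisfies $\hat f_{\p,n}(\pi_\p) = P_n(a_\p(\pi))$ for a fixed polynomial $P_n \in \R[x]$ of degree $n$. The family $\{P_n\}_{n \geq 0}$ spans $\R[x]$, which is dense in $C([-2, +2])$ by Weierstrass. Applying the extended form of Theorem \ref{thmintro1} with this choice of $f_\p$, and taking $\N$ large enough with respect to $n$, $E$, and $c(\Omega)$ so that only the main term of the geometric expansion survives, one obtains the exact identity
\begin{equation*}
\frac{1}{|\N|} \sum_{\pi \in \F(\N, 2\k)} \frac{L^\p(1/2, \pi_E\otimes\Omega)}{L^\p(1, \pi, \Ad)} P_n(a_\p(\pi)) = C_\p \int_{-2}^{+2} P_n(x)\, d\mu_\p(x),
\end{equation*}
where $C_\p$ denotes the constant displayed on the right-hand side of the theorem.

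Since the weights $L^\p(1/2, \pi_E \otimes \Omega) / L^\p(1, \pi, \Ad)$ are nonnegative, by Waldspurger's period formula together with the positivity of the adjoint $L$-value at $s = 1$, these numbers define a sequence of nonnegative measures on $[-2, +2]$ whose moments converge to those of $C_\p \mu_\p$. A standard sandwich argument, using absolute continuity of $\mu_\p$, then promotes this convergence to $\1_J$ for any subinterval $J$: approximate $\1_J$ from above and below by continuous functions with $\mu_\p$-integral difference less than $\epsilon$, approximate each of those uniformly by polynomials, apply the identity above, and let $\epsilon \to 0$.

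The principal technical point lies in the unramified local calculation at $\p$ needed to identify the constant $C_\p$ correctly. Passing from $L$ to $L^\p$ in Theorem \ref{thmintro1} inserts the factor $L_\p(1, \pi_\p, \Ad) / L_\p(1/2, \pi_{E,\p} \otimes \Omega_\p)$ into the spectral sum, and replacing $L^{S(\Omega)}(1, \eta)$ by $L^{S(\Omega) \cup \{\p\}}(1, \eta)$ contributes the factor $L_\p(1, \eta_\p) = q_\p/(q_\p + 1)$, since $\p$ is inert in $E$. One must check that the product of these, combined with the spherical Plancherel density at $a_\p(\pi)$ that arises from Plancherel inversion of $f_{\p,n}(1) = \int \hat f_{\p,n}\, d\mu_\p^{\mathrm{Pl}}$, reassembles to $L(2, 1_{F_\p})$ times the measure $\mu_\p$ recorded in the introduction. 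This is the step where the hypotheses $\p \nmid \N$, $\p$ inert in $E$, and $\Omega_\p$ unramified genuinely enter.
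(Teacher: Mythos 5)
Your approach is essentially the paper's: apply the exact formula of Theorem \ref{thmlargelevel}, use positivity of the weights, and conclude by a Stone--Weierstrass sandwich argument (the paper delegates this step to Serre's Proposition 2). Two points need correcting, though neither alters the conclusion.

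First, the displayed identity is not what Theorem \ref{thmlargelevel} yields. The trace formula produces $\sum_\pi \frac{L(1/2,\pi_E\otimes\Omega)}{L(1,\pi,Ad)}\hat f_\p(\pi_\p)$ on the spectral side and $\int_{-2}^2 \hat f_\p\,\mu_{\p,E,\Omega}$ on the geometric side. Splitting the complete $L$-functions as $L = L^\p L_\p$ and using the paper's relation $\mu_{\p,E,\Omega} = h(x)\,L(2,1_{F_\p})\,\mu_\p$ with
\[
h(x) = \frac{L\bigl(1/2,\rho(\alpha_x,\alpha_x^{-1})_{E_\p}\otimes\Omega_\p\bigr)}{L\bigl(1,\rho(\alpha_x,\alpha_x^{-1}),Ad\bigr)},
\]
one obtains, for $|\N|$ large,
\[
\frac{1}{|\N|}\sum_{\pi\in\F(\N,2\k)}\frac{L^\p(1/2,\pi_E\otimes\Omega)}{L^\p(1,\pi,Ad)}\,h(a_\p(\pi))\,\hat f_\p(\pi_\p) = C_\p\int_{-2}^{2} \hat f_\p(x)\,h(x)\,d\mu_\p(x);
\]
the factor $h$ appears on both sides. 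Your display with $P_n$ alone is not an identity, since $h$ is a nonconstant rational function of $x$. Nevertheless $h$ is fixed, continuous, and strictly positive on $[-2,2]$, so the products $h\,\hat f_\p$ still span a dense subspace of $C([-2,2])$, and the positivity-plus-sandwich argument applies to this family without change. Second, ``$\p$ inert in $E$'' is not a hypothesis of the theorem and should not be invoked; the relation $\mu_{\p,E,\Omega} = h\,L(2,1_{F_\p})\,\mu_\p$, and hence the final constant, hold for split, inert, or ramified $\p$, and regardless of whether $\Omega_\p$ is ramified (if it is, $\p\in S(\Omega)$ and the omitted Euler factor $L^{S(\Omega)\cup\{\p\}}$ is interpreted accordingly).
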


We note that in the case that $F = \Q$ and $\Omega$ is trivial, we recover the main result of \cite{ramakrishnan:2005}. A similar result has been obtained by Royer \cite{royer:2000} for the single $L$-value $L(1/2, \pi)$ averaged over modular forms $\pi$ of level $N$ and weight 2.

We remark that one could consider the average by normalizing by $|\F(\N, 2\k)|$ rather than $|\N|$. In order to have a finite limit with this normalization we need to add the technical restriction thatb $|\N|\prod_{\p | \N}(1-\frac{1}{|\p|})\sim |\N|$.  For $F=\Q$ this condition reduces to  $\phi(N) \sim N$ where $\phi$ is the Euler totient function.  Using the well known fact that $|\F(N, 2k)| \sim  \frac{2k-1}{12}\phi(N)$ as $N\rightarrow \infty$ we get the following statement.

\begin{corollary}
Let $F=\Q$ and $J \subset [-2, +2]$. Then
\[
\lim_{N \to \infty} \frac{1}{|\F(N, 2k)|} \sum_{\substack{\pi\in \F(N, 2k) \\ a_p(\pi)\in J}}
\frac{L^p(1/2, \pi_E\otimes\Omega)}{L^p(1, \pi, Ad)}
\]
is equal to
\[
\frac{24}{2k-1}  \binom{2k - 2}{k + m - 1}^{-1}
L^{S(\Omega)\cup\{p\}}(1, \eta) L(2, 1_{\Q_p}) \mu_{p}(J)
\]
where the limit is taken over squarefree $N$ such that $\phi(N) \sim N$ and each prime dividing $N$ is inert and unramified in $E$ and does not divide $c(\Omega)$.
\end{corollary}

Finally we apply our work to the problem of subconvexity. Using a version of Theorem \ref{thmintro1} that is also valid for smaller $\N$, combined with the non-negativity of $L(1/2, \pi\times \sigma_\Omega)$, established in \cite{jacquet:2001}, and an upper bound for $L(1, \pi, Ad)$, we get the following theorem.

\begin{theorem}
Fix a totally real number field $F$ and a CM extension $E$ of $F$. Let $\N$ be a squarefree ideal in $\O_F$ such that the number of primes dividing $\N$ has the same parity as $[F: \Q]$ and such that each prime of $F$ dividing $\N$ is inert and unramified in $E$. Let $\Omega$ be a character of $\A_F^\times E^\times \bs \A_E^\times$ which is unramified above $\N$ and has weights at the archimedean places strictly less than $\k$. Then for any $\epsilon>0$,
\[
L_{fin}(1/2, \pi \times \sigma_\Omega) \ll_{F, E, \k, \eps} |\N|^{1+\eps}c(\Omega)^\eps + |\N|^{\eps}c(\Omega)^{\frac{1}{2}+\epsilon},
\]
for all $\pi \in \F(\N, 2\k)$.

Hence for $0 \leq t < \frac 16$ and $\eps > 0$,
\[
L_{fin}(1/2, \pi \times \sigma_\Omega)\ll_{F, E, \k, \eps} (c(\Omega)|\N|)^{\frac{1}{2}-t},
\]
for $\pi\in\F(\N, 2\k)$ with $\N$ such that
\[
c(\Omega)^{\frac{2 t + \eps}{1-(2 t + \eps)}}\leq |\N|\leq c(\Omega)^{\frac{1-(2t+\eps)}{1+2 t + \eps}}.
\]
\end{theorem}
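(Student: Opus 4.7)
The plan is to derive the individual upper bound on $L_{fin}(1/2, \pi \times \sigma_\Omega)$ from a strengthening of the averaging formula of Theorem \ref{thmintro1} which holds without the hypothesis $|\N| > d_{E/F}\, c(\Omega)^{h_F}$. The idea is to exploit positivity in the spectral sum, drop all but one term, and then multiply out a standard upper bound for $L(1, \pi, \Ad)$ to isolate $L_{fin}(1/2, \pi \times \sigma_\Omega)$.

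First I would invoke the unrestricted version of the averaging formula stated in Section \ref{mainresults}. In addition to the ``main term'', which matches that of Theorem \ref{thmintro1} and is of size $|\N|\cdot c(\Omega)^\eps$ (using the convexity bound $L^{S(\Omega)}(1,\eta) \ll c(\Omega)^\eps$), the geometric side contains residual orbital-integral contributions twisted by the character $\Omega$. Evaluating these by Poisson summation and standard Gauss-sum estimates produces a bound of size $c(\Omega)^{1/2 + \eps}$, uniform in $|\N|$. Combining the two gives
\[
\sum_{\pi \in \F(\N, 2\k)} \frac{L(1/2, \pi_E \otimes \Omega)}{L(1, \pi, \Ad)}
\ll_{F, E, \k, \eps}
|\N|\, c(\Omega)^\eps + c(\Omega)^{1/2 + \eps}.
\]
Since $L(1/2, \pi \times \sigma_\Omega) = L(1/2, \pi_E \otimes \Omega) \geq 0$ by \cite{jacquet:2001} and $L(1,\pi,\Ad) > 0$, every summand is non-negative; retaining only the term for a given $\pi$ and multiplying by the standard upper bound $L(1, \pi, \Ad) \ll_{F, \k, \eps} |\N|^\eps$ produces the first inequality of the theorem, after absorbing the uniformly bounded archimedean local factors relating $L_{fin}$ to the completed $L$-function.

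For the subconvexity consequence I balance the two terms against the target $(c(\Omega)|\N|)^{1/2 - t}$. The condition $|\N|^{1+\eps} c(\Omega)^\eps \ll (|\N| c(\Omega))^{1/2 - t}$ rearranges to $|\N| \ll c(\Omega)^{(1 - (2t + \eps))/(1 + 2t + \eps)}$, while $|\N|^\eps c(\Omega)^{1/2 + \eps} \ll (|\N| c(\Omega))^{1/2 - t}$ rearranges (after renaming $\eps$) to $|\N| \gg c(\Omega)^{(2t + \eps)/(1 - (2t + \eps))}$. These match exactly the range stated in the theorem, and are mutually compatible precisely when $(1-(2t+\eps))^2 > (2t+\eps)(1 + 2t + \eps)$, i.e. when $2t + \eps < 1/3$; this forces the restriction $t < 1/6$ after taking $\eps$ small.

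The main obstacle is locating and estimating the residual geometric terms in the unrestricted averaging formula. One must identify the orbital-integral contributions not absorbed into the ``main term'' (those on which the character $\Omega$ produces a non-trivial twisted sum), apply Poisson summation in the adelic variable ranging over a torus in $\GL(2)$, and exhibit square-root cancellation in the resulting $\Omega$-twisted character sums in order to achieve the crucial $c(\Omega)^{1/2 + \eps}$ saving; any weaker estimate would collapse the subconvexity range. The remaining ingredients -- positivity, the adjoint $L$-value bound, and the exponent balancing -- are routine once this estimate is in hand.
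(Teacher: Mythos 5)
Your overall architecture is correct and matches the paper's: normalize the trace formula, split the geometric side into a main (irregular) piece and a residual (regular) piece, show the two contribute $|\N|\,c(\Omega)^\eps$ and $c(\Omega)^{1/2+\eps}$ respectively after normalization, drop all but one term by positivity, and multiply by $L(1,\pi,\Ad)\ll|\N|^\eps$. Your exponent bookkeeping at the end, including the derivation of $t<\tfrac16$, is also correct.

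The gap is in the mechanism you propose for bounding the residual (regular) orbital-integral contribution $I_{reg}(f)$. You claim the $c(\Omega)^{1/2+\eps}$ saving comes from Poisson summation in an adelic torus variable and ``square-root cancellation'' in $\Omega$-twisted Gauss sums. That is not what drives the estimate, and it is not clear such an argument would close as stated. The paper's proof of the key bound (Lemma \ref{lemQ}) takes absolute values of every local orbital integral and exploits \emph{no} oscillation or cancellation at all. The saving comes from two trivial sources: (i) a volume bound on the support of the local orbital integrals at each place $\p_i$ where $\Omega$ is ramified -- the maximal order $R_{\p_i}$ is chosen so that $R_{\p_i}\cap E_{\p_i} = \O_{F_{\p_i}} + \varpi_{\p_i}^{n_i}\O_{E_{\p_i}}$, so the domain of integration already carries a factor $q_i^{-n_i}$ before any cancellation is contemplated (see Lemma \ref{lemma:finalbound}); and (ii) a pigeonhole lattice-point count (Lemma \ref{claim:size}, in the spirit of \cite[\S V.1]{lang:1994}) controlling the number of $\xi$'s in the finite set $S(\Omega,\N,\1_\p)$. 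Summed against the local bound, these two effects produce $|I_{reg}(f)|\ll c(\Omega)^\eps/|\N|$, which after the $|\N|\sqrt{c(\Omega)}$ normalization yields $c(\Omega)^{1/2+\eps}$. In short, the $c(\Omega)^{1/2+\eps}$ term is a ``trivial bound'' in the paper, not a cancellation estimate, and you would need to supply the volume analysis of the local orbital integrals and the geometry-of-numbers count rather than the Fourier-analytic route you sketch.
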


This result clearly beats the convexity bound $L_{fin}(1/2, \pi\times \sigma_\Omega) \ll_\eps (d_{E/\Q}c(\Omega)N)^{\frac{1}{2} + \eps}$ for all $\eps>0$. Similar results have been obtained in \cite{ramakrishnan:exact}, where $\Omega$ is a character of the ideal class group of $E$ and $N$ and $E$ vary. We remark that Michel and Harcos (\cite{harcos:2006} and \cite{michel:2004}) have proven subconvexity in the level aspect for $L_{fin}(1/2, \pi_1\times\pi_2)$ where $\pi_1$ and $\pi_2$ are cusp forms on $\GL(2)/\Q$ with $\pi_2$ fixed. We also mention the work of Cogdell, Piatetski-Shapiro and Sarnak \cite{cogdell:2003} which proves subconvexity for the central value of a fixed Hilbert modular form twisted by a ray class character.

\subsection{About the proof}

Before continuing we give some background about the tools used in the proofs of the results contained in this paper.

\subsubsection{Waldspurger's result}

An important result of Waldspurger \cite{waldspurger:1985} relates $L(1/2, \pi_E\otimes\Omega)$ to period integrals of automorphic forms over the torus $E^\times$. More precisely let $X(\pi, E)$ be the set of isomorphism classes of quaternion algebras $D/F$ such that $E \hookrightarrow D$ and such that $\pi$ comes from an automorphic representation $\pi^D$ of $D^\times$ via the Jacquet-Langlands correspondence. We note that $X(\pi, E)$ is finite, since $D$ must be isomorphic to the matrix algebra at all places where $\pi$ is unramified, and non-empty, since $M(2, F)\in X(\pi, E)$.

Take $D \in X(\pi, E)$ and let $\phi$ be an element in the space of $\pi^D$. Thus, $\phi$ is a function on $D^\times \bs D^\times(\A_F)$ and we can define period integrals by
$$
P_D(\phi) = \int_{E^\times \A_F^\times \bs \A_E^\times} \phi(t) \Omega^{-1}(t) \ dt,
$$
with $\A_E^\times$ viewed as a subgroup of $D^\times(\A_F)$ via the inclusion $E \hookrightarrow D$. In \cite[Proposition 7]{waldspurger:1985}, $|P_D(\phi)|^2$ is expressed in terms of $L(1/2, \pi_E\otimes\Omega)$. For suitable choices of measure Waldspurger proves that,
$$
\frac{|P_D(\phi)|^2}{(\phi, \phi)} = \frac{L(1/2, \pi_E\otimes\Omega)}{2 L(1, \pi, Ad)} \prod_v \alpha_v(E_v, \phi_v, \Omega_v),
$$
where $\alpha_v(E_v, \phi_v, \Omega_v)$ are local period integrals which are equal to 1 for almost all $v$. Subsequent refinements of Waldspurger's result, \cite{gross:1987}, \cite{zhang:2001}, \cite{xue:2006}, \cite{popa:2006}, \cite{me:periods}, have sought to compute these local period integrals so as to provide an exact formula relating the $L$-value and the period integral. Under certain additional ramification conditions these results take the form,
\begin{equation}
\frac{|P_D(\phi_\pi)|^2}{(\phi_\pi, \phi_\pi)} = C(E, \pi, \Omega) L(1/2, \pi_E\otimes\Omega),
\label{periodl}
\end{equation}
where $D$ is a suitable element of $X(\pi, E)$ and $\phi_\pi \in \pi^D$ is a test vector defined by Gross and Prasad \cite{gross:1991}. The constant $C(E, \pi, \Omega)$ is of the form,
$$
C(E, \pi, \Omega) = \frac{1}{L(1, \pi, Ad)} \frac{\sqrt{\Delta_F}}{2 \sqrt{c(\Omega) \Delta_E}} \prod_v C_v(E, \pi, \Omega)
$$
where the product is taken over the places $v$ of $F$ which are ``bad" for $\pi$ or $\Omega$ and $C_v(E, \pi, \Omega)$ consists of certain local $L$-factors.

\subsubsection{Relative trace formula}

Having fixed the extension $E/F$ let $D/F$ be a quaternion algebra such that $E\hookrightarrow D$. We set $G$ equal to the group $PD^\times$ over $F$ and let $T$ be the torus in $G$ such that $T(F)$ is equal to the image of $E^\times$ in $G(F)$.

Let $f\in C^\infty_c(G(\A_F))$. Then we have the map
$$
R(f) : L^2(G(F)\bs G(\A_F)) \to L^2(G(F)\bs G(\A_F))
$$
given by
$$
(R(f)\phi)(x) = \int_{G(\A_F)} f(y) \phi(x y) \ dy.
$$
$R(f)$ is an integral operator with kernel
$$
K_f(x, y) = \sum_{\gamma\in G(F)} f(x^{-1}\gamma y).
$$
When $D$ is a division algebra, as a representation of $G(\A_F)$,
$$
L^2(G(F)\bs G(\A_F)) = \bigoplus_{\pi\in\mathcal A(G)} \pi
$$
with the sum taken over the set of irreducible automorphic representations $\mathcal A(G)$ of $G(\A_F)$. Since $R(f)$ preserves each of the spaces $\pi$, the kernel has a spectral expansion
$$
K_f(x, y) = \sum_{\pi\in\mathcal A(G)} \sum_{\phi \in \B(\pi)} (R(f)\phi)(x) \overline{\phi(y)},
$$
where $\B(\pi)$ denotes an orthonormal basis of the space of $\pi$.

Let $I(f)$ be the distribution defined by
$$
I(f) = \int_{T(F)\bs T(\A_F)} \int_{T(F)\bs T(\A_F)} K_f(t_1, t_2) \Omega(t_1^{-1}t_2) \ dt_1 \ dt_2.
$$
The spectral expansion for $K_f(x, y)$ gives,
$$
I(f) = \sum_{\pi\in\mathcal A(G)} \sum_{\phi \in \B(\pi)} \int_{T(F)\bs T(\A_F)} (R(f)\phi)(t_1)\Omega^{-1}(t_1) \ dt_1 \overline{\int_{T(F)\bs T(\A_F)} \phi(t_2)\Omega^{-1}(t_2) \ dt_2}.
$$
From the geometric expansion for the kernel, and after interchanging summation and integration,
\begin{equation}
I(f) = \sum_{\gamma \in T(F) \bs G(F) / T(F)} \vol(T_\gamma(F)\bs T_\gamma(\A_F)) \int_{T_\gamma(\A_F) \bs (T(\A_F)\times T(\A_F))} f(t_1^{-1} \gamma t_2) \Omega(t_1^{-1} t_2) \ dt_1 \ dt_2,
\label{geomexp}
\end{equation}
where, for $\gamma \in G(F)$,
$$
T_\gamma = \left\{ (t_1, t_2) \in T\times T : t_1^{-1}\gamma t_2 = \gamma \right\}.
$$

We now return to the setting of the previous section. Having fixed $\N$ we let $D$ be the quaternion algebra over $F$ which is ramified at the primes dividing $\N$ and all the infinite places of $F$. Since the extension $E/F$ is inert at all the places of ramification for $D$ there exists an embedding $E \hookrightarrow D$. For each $\pi \in \F(\N, 2\k)$ we let $\phi_\pi$ be an element in the space of $\pi^D$ as in \eqref{periodl} above. We can choose a test function $f = f_{\N, \k}$ in $C^\infty_c(G(\A_F))$, such that the operator $R(f)$ projects $L^2(G(F)\bs G(\A_F))$ onto the span of the set $\{ \phi_\pi \in \pi^D : \pi \in \F(\N, 2\k) \}$. (This isn't quite true if all $k_v = 1$, in this case the function $f_{\N, \k}$ may also pick out some 1-dimensional representations as well.) Hence for this function,
$$
I(f_{\N, \k}) =  \sum_{\pi\in\F(\N, 2\k)} \frac{|\int_{T(F)\bs T(\A_F)} \phi_\pi(t)\Omega^{-1}(t) \ dt|^2}{(\phi_\pi, \phi_\pi)}.
$$

Applying the identity \eqref{periodl} for each $\pi\in\F(\N, 2\k)$ we get,
$$
I(f_{\N, \k}) = C(E, \Omega, \k, \N) \sum_{\pi \in \F(\N, 2\k)} \frac{L(1/2, \pi_E\otimes\Omega)}{L(1, \pi, Ad)},
$$
with $C(E, \Omega, \k, \N)$ an explicit constant. From the geometric expansion for $I(f_{\N, \k})$, \eqref{geomexp}, we obtain a closed expression for this average in terms of orbital integrals of the function $f_{\N, k}$ over double cosets. Furthermore one can show that only finitely many $T(F)$ double cosets contribute to the sum. The results of this paper then stem from calculations of these geometric terms. When the level $\N$ is large we show that only the identity double coset contributes which gives an exact formula for the average (Theorem \ref{thmlargelevel}), when the character $\Omega$ is everywhere unramified we can compute all the necessary orbital integrals (Theorem \ref{thmunramified}) and when the character $\Omega$ ramifies we are able to bound the terms on the geometric side of the trace formula which we can use towards the subconvexity problem for these $L$-values (Theorem \ref{thmsub}).

\subsubsection{The case of modular forms of weight 2}

When $F=\Q$ the results of this paper are rephrased classically in Section \ref{classical}. Here we illustrate our methods for modular forms of weight 2 and prime level in more classical language.

We fix an imaginary quadratic extension $E = \Q(\sqrt{-d})$ and take $N$ to be a prime which is inert and unramified in $E$. We take $\Omega$ to be a character of $\Pic(E)$, the ideal class group of $E$. Let $D$ be the quaternion algebra over $\Q$ which is ramified at $N$ and $\infty$. We fix an embedding $E \hookrightarrow D$ and take $R$ to be a maximal order in $D$ such that $R\cap E = \O_E$. Let $X$ denote the finite set of left equivalence classes of right $R$-ideals; see \cite[Section 1]{gross:1987}. Given an ideal $\a$ in $E$ we may form the right $R$-ideal $\a R$ in $D$. In this way we get a well defined map $\iota : \Pic(E) \to X$. Let $F(X)$ denote the space of complex valued functions on $X$ which can be endowed with a natural inner product and an action of Hecke operators.

The Jacquet-Langlands correspondence, in this case, gives a Hecke-equivarient isomorphism $\JL : F(X) \iso M_{2}(N)$ of $F(X)$ with the space of modular forms of weight 2 and level $N$; see \cite[Section 5]{gross:1987}. We set $S(X) = \JL^{-1}(S_2(N))$ where $S_2(N)$ denotes the space of cusp forms of level $N$ and weight 2. The orthogonal complement of $S(X)$ in $F(X)$ is the space of constant functions.

We identify $\F(N, 2)$ with the set of normalized eigenforms in $S_2(N)$. For $f\in\F(N, 2)$ let $f'\in F(X)$ be such that $\JL(f') = f$. Then the period relation for the central $L$-value can be expressed as,
$$
\frac{L(1, f \times g_\Omega)}{(f, f)} = C(N, E) \frac{\left|\sum_{y\in\Pic(E)} f'(\iota(y)) \Omega(y) \right|^2}{(f', f')},
$$
where $g_\Omega$ is the $\theta$-series associated to $\Omega$ and $L(1, f \times g_\Omega)$ denotes the central value of the Rankin-Selberg $L$-function of $f$ with $g_\Omega$. Here $C(N,E)$ is an explicit constant depending only on $N$ and $E$. Thus,
$$
\sum_{f\in \F(N, 2)} \frac{L(1, f \times g_\Omega)}{(f, f)} = C(N, E) \sum_{f\in\F(N, 2)} \frac{\left|\sum_{y\in\Pic(E)} f'(\iota(y)) \Omega(y) \right|^2}{(f', f')}.
$$
Since the orthogonal complement of $S(X)$ is spanned by the constant functions we see that if $\B$ is {\em any} orthonormal basis of $F(X)$ then
$$
\sum_{f\in \F(N, 2)} \frac{L(1, f \times g_\Omega)}{(f, f)} + C(N, E) \delta_{\Omega} \frac{\# \Pic(E)^2}{(\1, \1)} = C(N, E) \sum_{h\in\B} \left|\sum_{y\in\Pic(E)} h(\iota(y)) \Omega(y) \right|^2,
$$
where $\1$ denotes the function which is identically $1$ on $X$, and
$$
\delta_{\Omega} =
\left\{
  \begin{array}{ll}
    1, & \hbox{if $\Omega$ is trivial;} \\
    0, & \hbox{otherwise.}
  \end{array}
\right.
$$
For each $x\in X$ let $\delta_x \in F(X)$ be the function defined by
$$
\delta_{x}(y) =
\left\{
  \begin{array}{ll}
    1, & \hbox{if $y = x$;} \\
    0, & \hbox{otherwise.}
  \end{array}
\right.
$$
Now suppose we take for $\B$ the set $\left\{\delta_{x}/\|\delta_x\| : x \in X\right\}$. Then,
\begin{align*}
\sum_{h\in\B} \left|\sum_{y\in\Pic(E)} h(\iota(x)) \Omega(x) \right|^2 &= \sum_{x\in X} \sum_{y_1\in\Pic(E)} \sum_{y_2\in\Pic(E)} \frac{\delta_x(\iota(y_1)) \delta_x(\iota(y_2))}{\|\delta_x\|^2} \Omega(y_1 y_2^{-1})\\
&= \sum_{y_1\in\Pic(E)} \sum_{y_2\in\Pic(E)} \left( \frac{\delta_x(\iota(y_1)) \delta_x(\iota(y_2))}{\|\delta_x\|^2} \right) \Omega(y_1 y_2^{-1}).
\end{align*}
For $y_1, y_2 \in \Pic(E)$ clearly,
$$
\sum_{x\in X} \delta_x(\iota(y_1)) \delta_x(\iota(y_2)) =
\left\{
  \begin{array}{ll}
    1, & \hbox{if $\iota(y_1) = \iota(y_2)$;} \\
    0, & \hbox{otherwise.}
  \end{array}
\right.
$$
Let $\a_y$ denote an ideal of $E$ which lies in the class of $y\in\Pic(E)$. Then $\iota(y_1) = \iota(y_2)$ if and only if there exists $\gamma$, well defined in $E^\times \bs D^\times / E^\times$, such that $\gamma \a_{y_1} R = \a_{y_2} R$. For each $\gamma\in E^\times \bs D^\times / E^\times$ let,
$$
S(\gamma) = \left\{ (y_1, y_2) \in \Pic(E) \times \Pic(E) : \gamma \a_{y_1} R = \a_{y_2} R \right\}.
$$
Hence,
$$
\sum_{h\in\B} \left|\sum_{x\in\Pic(E)} h(\iota(x)) \Omega(x) \right|^2 = \sum_{\gamma \in E^\times \bs D^\times / E^\times} \sum_{(y_1, y_2) \in S(\gamma)} \frac{\Omega(y_1 y_2^{-1})}{\|\delta_{\iota(y_1)}\|^2},
$$
and so we deduce that
$$
\sum_{f\in \F(N, 2)} \frac{L(1, f \times g_\Omega)}{(f, f)} + C(N, E) \delta_{\1, \Omega} \frac{\# \Pic(E)^2}{(\1, \1)}
$$
equals
$$
C(N, E) \left( \sum_{\gamma \in E^\times \bs D^\times / E^\times} \sum_{(y_1, y_2) \in S(\gamma)} \frac{\Omega(y_1 y_2^{-1})}{\|\delta_{\iota(y_1)}\|^2} \right).
$$
This is precisely the same identity, albeit without the adelic language, obtained via the relative trace formula. Furthermore it is clear that if $\iota : \Pic(E) \to X$ is injective then $S(\gamma)$ is empty unless $\gamma$ is the identity coset in $E^\times\bs D^\times /E^\times$. The fact that $\iota$ is injective for $N$ sufficiently large then implies the stability result (Theorem \ref{thmlargelevel}) for the sum of the $L$-values which we obtain via the relative trace formula. For the application to subconvexity (Theorem \ref{thmsub}) we need to provide bounds on the terms appearing in the geometric expansion. These we obtain from local calculations of the orbital integrals.

A similar situation has been studied in this setting by Michel and Venkatesh \cite[Section 3]{michel:2007}. In their work the form $f$ is fixed and $\Omega$ is allowed to vary over all characters of $\Pic(E)$. By averaging the period formula one obtains,
$$
\sum_{\Omega\in\widehat{\Pic(E)}} \frac{L(1, f\times g_\Omega)}{(f, f)} = C(N, E) \sum_{y\in \Pic(E)} \frac{|f'(\iota(y))|^2}{(f', f')}.
$$
Analogously to the case considered here, when the discriminant of $E$ is large relative to the level of $f$ this formula can be made exact; see \cite[Remark 3.1]{michel:2007}.

\subsection{About the paper}

\subsubsection{Summary of conditions}
\label{summary}

Throughout this paper we enforce certain conditions on the level $\N$ and weights $2 \k$ of our Hilbert modular forms as well as on the extension $E$ and the character $\Omega$. We now describe these conditions.

The ideal $\N$ of $F$ is assumed to be squarefree and such that each prime divisor of $\N$ is inert and unramified in $E$. This is, for the most part, a technical assumption so that we avoid the problem of having to deal with oldforms. This condition can most likely be removed with some extra work. We also assume that the number of prime divisors of $\N$ has the same parity as $[F:\Q]$, this is a necessary condition as it ensures that the sign of the functional equation of $L(s, \pi_E\otimes\Omega)$ is $+1$.

We place certain restrictions on the character $\Omega$ as well. We assume that $\Omega$ is unramified above $\N$. This assumption is needed in the work of Gross and Prasad \cite{gross:1991} which provides the test vector $\phi_\pi$ which appears in \eqref{periodl}. We also assume, at the archimedean places, that the weights of $\Omega$ are strictly smaller than the weights $\k$ of the Hilbert modular forms. This ensures that the quaternion algebra $D$ which appears in \eqref{periodl} is ramified at the archimedean places of $F$. Without this assumption the function $f_{\N, \k}$ would not be compactly supported and one would not obtain a {\em finite} closed formula for the average $L$-values. In this case it is likely that one could work out an asymptotic version of the formulas in this paper. This would require some additional archimedean calculations similar to \cite[Section 2]{ramakrishnan:2005}.

Finally it would also be interesting to consider the case of Maass forms. It would seem that again with some additional archimedean calculations this could be carried out.

\subsubsection{Outline of the paper}

This paper is set up as follows. We begin by introducing our notation and defining our measures. In
the following section we pick our test functions and compute their spectral expansion in the
relative trace formula. In the fourth section we explicitly compute the geometric side of the
relative trace formula for these test functions. While, a priori, there are an infinite number of
orbital integrals that appear in the geometric expansion, we show that for our test functions only
a finite number of terms are nonzero. This is what allows us to get an exact, rather than
asymptotic, formula.

In the fifth section we compute a distribution on the Hecke algebra which appears in the geometric
expansion of the relative trace formula. In the last section we combine the spectral and geometric
calculations to get our main formulas. We also establish our application to subconvexity and
rewrite our formulae over $\Q$ in classical language.

The original motivation for our work came from trying to understand \cite{ramakrishnan:2005}, which
applies the relative trace formula of \cite{jacquet:1986} to average values of base change
$L$-functions. We found that by integrating over nonsplit rather than split tori, and applying
\cite{me:periods}, we obtain more general and exact results. Furthermore, by working on quaternion
algebras we avoid having to deal with contributions from oldforms.

\subsection{Acknowledgements}

We wish to thank Peter Sarnak for his valuable suggestions and encouragement. We thank Dinakar Ramakrishnan for his useful comments, as well as furnishing us with early versions of his work with Philippe Michel \cite{ramakrishnan:exact}. We also thank Dipendra Prasad for helpful conversations and the Institute for Advanced Study for a stimulating mathematical environment. Finally, we thank the referees for their helpful comments.

The first author was supported by NSF grant DMS-0111298 and the Natural Sciences and Engineering Research Council of Canada. The second author was supported by NSF grants DMS-0111298 and DMS-0758197. Any opinions, findings and conclusions or recommendations expressed in this material
are those of the authors and do not necessarily reflect the views of the National Science
Foundation.

\section{Notation}
\label{notation}

Let $F$ be a totally real number field of discriminant $\Delta_F$ and class number $h_F$. For a
finite place $v$ of $F$, $\varpi_v$ denotes a uniformizing parameter in $F_v$ and $q_v$ denotes the
cardinality of the residue class field at $v$. The ring of integers in $F_v$ is denoted by
$\O_{F_v}$, the units by $U_{F_v}$ and for $n>0$ we set $U_{F_v}^n = 1 + \varpi_v^n \O_{F_v}$. For
an ideal $\mathfrak a$ of $\O_F$, we denote by $|\mathfrak a|$ the absolute norm of $\mathfrak a$.
We denote by $\Sigma_\infty$ the set of infinite places of $F$. We take $\N$ to be a squarefree
ideal in $\O_F$ such that the number of primes dividing $\N$ has the same parity as $[F:\Q]$. For
each place $v\in\Sigma_{\infty}$ we fix an integer $k_v\geq 1$. We denote by $\F(\N, 2\k)$ the
set of cuspidal automorphic representations of $\PGL(2, \A_F)$ of level $\N$ and weight $2\k = (2k_v)$.

We now take a CM extension $E/F$ such that each prime of $F$ dividing $\N$ is inert and unramified
in $E$. We let $\mathfrak D_{E/F}$ be the different of $E/F$, $\d_{E/F}$ be the discriminant of
$E/F$ and  $d_{E/F}=|\d_{E/F}|$. We denote by $\eta$ the quadratic character of
$F^\times\bs\A_F^\times$ associated to $E/F$ by class field theory and by $N$ the norm map from $E$ to $F$. For a
place $v$ of $F$ we denote by $E_v = E \otimes_F F_v$, by $\O_{E_v}$ the integral closure of
$\O_{F_v}$ in $E_v$ and by $U_{E_v}=\O_{E_v}^\times$. We denote the action of the non-trivial
element in $\Gal(E/F)$ on $\alpha \in E_v$ by $\bar{\alpha}$.

We take a unitary character $\Omega : E^\times\bs\A_E^\times \to \C^\times$ such that
$\Omega|_{\A_F^\times}$ is trivial. We assume that $\Omega$ is unramified above $\N$ and that at
each place $v\in\Sigma_{\infty}$, $\Omega_v$ has the form
$$
z \mapsto \left(\frac{z}{\bar{z}}\right)^{m_v}
$$
with $|m_v|$ strictly less than $k_v$. We set $\m = (m_v)$. At each finite place $v$ of $F$ we denote by $n(\Omega_v)$ the
smallest non-negative integer such that $\Omega$ is trivial on $(\O_F + \varpi_v^{n(\Omega_v)} \O_E)^\times$. We denote by $\mathfrak c(\Omega)$ the
norm of the conductor of $\Omega$ in $F$ and by $c(\Omega)$ the absolute norm of $\mathfrak
c(\Omega)$. We note that $c(\Omega)=\prod_{v< \infty}q_v^{2n(\Omega_v)}$. We use $S(\Omega)$ to denote the set of places of $F$ above which $\Omega$ is ramified.

We define
$$
\binom{2\k - 2}{\k + \m - 1} = \prod_{v\in\Sigma_\infty} \binom{2k_v - 2}{k_v + m_v - 1}.
$$

Having fixed $\N$, we let $D$ be the quaternion algebra defined over $F$ which is ramified precisely at the
infinite places of $F$ and the places dividing $\N$. We fix an embedding $E\hookrightarrow D$. We
let $Z$ denote the center of $D$ and denote by $G$ the group $Z \bs D^\times$ viewed as an algebraic
group defined over $F$. We denote by $N_D : D^\times \to F^\times$ the reduced norm. At each finite
place $v$ of $F$ we fix a maximal order $R_v$ in $D_v$ such that $R_v\cap E_v = \O_{F_v} +
\varpi_v^{n(\Omega_v)}\O_{E_v}$. We note that $R_v$ is well defined up to $E_v^\times$ conjugacy.

All $L$-functions are completed.

\subsection{Normalization of measures}

We fix an additive character $\psi$ of $F\bs\A_F$ which, for the sake of convenience, we take to be
$\psi = \psi_0 \circ \tr_{F/\Q}$ where $\psi_0$ denotes the standard character on $\Q\bs\A_\Q$. For
a place $v$ of $F$ we take the additive Haar measure $dx$ on $F_v$ which is self-dual with respect
to $\psi_v$. On $F_v^\times$ we take the measure
$$
d^\times x = L(1, 1_{F_v}) \frac{dx}{|x|_v}.
$$
We define measures on $E_v$ and $E_v^\times$ similarly with respect to the additive character $\psi
\circ \tr_{E/F}$. We note that with these choices of measures we have
$$
\prod_{v < \infty} \vol(U_{F_v}, d^\times x_v) = |\Delta_F|^{-\frac 12},
$$
and similarly for $E$. We also have
$$
\vol(F_v^\times\bs E_v^\times) = \vol(\R^\times\bs\C^\times) = 2
$$
for $v\in\Sigma_\infty$.

For the group $D$ we recall that there exists an $\eps\in F^\times$, well defined in
$F^\times/NE^\times$, such that $D$ is isomorphic to
$$
\left\{ \bmx \alpha&\eps \beta\\\bar{\beta}&\bar{\alpha} \emx : \alpha, \beta \in E \right\}.
$$
For a place $v$ of $F$ we take the Tamagawa measure $dg_v$ on $D^\times_v$ which is given by
$$
dg_v = L(1, 1_{F_v}) |\eps|_v \frac{d\alpha_v \ d\beta_v}{|\alpha_v\bar{\alpha}_v - \eps \beta_v
\bar{\beta}_v|_v^2}.
$$
We note that this measure only depends on the choice of $\eps$ modulo $NE^\times$. Furthermore with this definition
$$
\vol(R^\times_v) = L(2, 1_{F_v})^{-1}  \vol(U_{F_v}, d^\times x_v)^4
$$
for non-archimedean $v \nmid \N$ since the isomorphism of  $D_v^\times$ with $\GL(2,F_v)$ preserves Tamagawa measures. We have
$$
\vol(R^\times_v)  = \frac{L(2, 1_{F_v})^{-1}}{q_v - 1} \vol(U_{F_v}, d^\times x_v)^4
$$
for $v \mid \N$ by a straightforward calculation. We also note that
$$
\vol(G(F_v)) = 4 \pi^2
$$
for $v \in \Sigma_\infty$.

Globally we take the product of these local measures and give discrete subgroups the counting
measures. In this way we get
$$
\vol(\A_F^\times E^\times \bs \A_E^\times) = 2 L(1, \eta)
$$
and
$$
\vol(G(F) \bs G(\A_F)) = 2.
$$

\section{Spectral side of the trace formula}
\label{spectral}

Having fixed the quaternion algebra $D$ we have taken $G$ to be the algebraic group defined over $F$ with $G(F) = D^\times/F^\times$. Since $D$ is anisotropic the quotient $G(F)\bs G(\A_F)$ is compact and hence, as a representation of $G(\A_F)$,
$$
L^2(G(F)\bs G(\A_F)) = \widehat{\bigoplus}_{\pi'\in\mathcal A(G)} V_{\pi'}.
$$
Here the sum is taken over the irreducible automorphic representations $\mathcal A(G)$ of $G(\A_F)$ and for each $\pi'\in\mathcal A(G)$, $V_{\pi'}$ denotes the space of $\pi'$. The Jacquet-Langlands correspondence \cite{jacquet:1970} yields an injection $\JL : \mathcal A(G) \hookrightarrow \mathcal A(\PGL(2))$, where $\mathcal A(\PGL(2))$ denotes the set of automorphic representations of $\PGL(2, \A_F)$. The set $\calA(G)$ can be decomposed as,
$$
\calA(G) = \calA_{cusp}(G) \amalg \calA_{res}(G),
$$
where
$$
\calA_{cusp}(G) = \left\{ \pi'\in\calA(G) : \JL(\pi') \text{ is cuspidal}\right\}
$$
and
$$
\calA_{res}(G) = \left\{ \delta \circ N_D : \delta : F^\times \bs \A_F^\times \to \{\pm 1\}\right\}.
$$

The compatibility between the local and global Jacquet-Langlands correspondence yields the following.

\begin{fact}
The image of $\calA_{cusp}(G)$ under $\JL$ is equal to the set of cuspidal automorphic representations $\pi = \otimes_v \pi_v$ of $\PGL(2, \A_F)$ such that $\pi_v$ is a discrete series representation of $\PGL(2, F_v)$ at all places $v$ where $D$ ramifies. In particular $\F(\N, 2\k)$ is contained in the image of $\JL$ and for $\pi' = \otimes_v \pi'_v \in\calA_{cusp}(G)$ we have $\JL(\pi')\in\F(\N, 2\k)$ if and only if,
\begin{enumerate}
  \item for $v\mid\N$, $\pi'_v \cong \delta_v \circ N_{D_v}$ where $\delta_v : F_v^\times \to \C^\times$ is an unramified character (necessarily of order at most 2),
  \item for $v\in\Sigma_\infty$, $\pi'_v \cong \Sym^{2k_v-2} V \otimes \det^{1 - k_v}$, where $V$ denotes the irreducible 2-dimensional representation of $D^\times(F_v)$ coming from the isomorphism $D^\times(\overline{F}_v) \iso \GL(2, \C)$, and
  \item for all other $v$, $\pi'_v$ is unramified.
\end{enumerate}
We set $\F'(\N, 2\k) = \{ \pi' \in \calA_{cusp}(G) : \JL(\pi') \in \F(\N, 2\k)\}$.
\label{factJL}
\end{fact}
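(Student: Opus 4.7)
The plan is to obtain this fact by assembling three ingredients: the global Jacquet--Langlands correspondence of \cite{jacquet:1970}, the local Jacquet--Langlands correspondence at the ramified places of $D$, and the standard dictionary converting the global conditions ``level $\N$'', ``weight $2\k$'', and ``trivial central character'' into local conditions on the components $\pi_v$.

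First, I would recall that the global Jacquet--Langlands correspondence gives an injection $\JL : \calA(G) \hookrightarrow \calA(\PGL(2))$ whose image consists precisely of those automorphic representations $\pi = \otimes_v \pi_v$ of $\PGL(2, \A_F)$ such that $\pi_v$ is square-integrable (discrete series or Steinberg) at every place $v$ where $D$ ramifies. Moreover, at such a place $v$, $\JL$ identifies $\pi'_v$ with the unique discrete series component $\pi_v$ via the local JL bijection, and at all remaining places we have $\pi'_v \cong \pi_v$ tautologically since $D_v \cong M(2, F_v)$. The decomposition $\calA(G) = \calA_{cusp}(G) \amalg \calA_{res}(G)$ then follows from the fact that non-cuspidal discrete automorphic representations of $\PGL(2)$ are one-dimensional, hence of the form $\delta \circ \det$ with $\delta^2 = 1$, which under $\JL$ come from the residual representations $\delta \circ N_D$.

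Next, I would translate the membership condition $\JL(\pi') \in \F(\N, 2\k)$ place-by-place. At a finite place $v \nmid \N$ the representation $\pi_v$ is unramified by the level condition, giving (3). At an archimedean place, $\pi_v$ is the weight $2k_v$ holomorphic discrete series of $\PGL(2, \R)$ with trivial central character; the local JL correspondence at $\R$, together with the fact that $D^\times(F_v)$ is the compact group of units of the Hamilton quaternions modulo center, matches this with the unique irreducible $(2k_v - 1)$-dimensional representation of $D^\times(F_v)/F_v^\times$, which is realized as $\Sym^{2k_v - 2} V \otimes \det^{1 - k_v}$ (the determinant twist being forced by the requirement of trivial central character). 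This gives (2). Finally, at a place $v \mid \N$, since $\N$ is squarefree and $\pi$ has trivial central character and exact level $\N$, the representation $\pi_v$ must be an unramified twist of the Steinberg representation of $\PGL(2, F_v)$; under local JL the (twists of the) Steinberg correspond to (twists of the) trivial representation of the compact group $D_v^\times / F_v^\times$ by unramified characters of $F_v^\times$ via $N_{D_v}$, yielding (1).

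The only step requiring some care is the archimedean matching in (2): one must pin down the correct determinant twist so that the central character is trivial and the Harish--Chandra parameters align with weight $2k_v$ rather than $2k_v - 2$ or $2k_v + 2$. This is a routine normalization check using that $\Sym^{2k_v - 2} V$ has central character $z \mapsto z^{2k_v - 2}$ on the center of $D^\times(F_v)$ so the twist by $\det^{1-k_v}$ is needed to kill it. All other steps are direct invocations of the local--global compatibility in \cite{jacquet:1970} together with the well-known local classification of $\PGL(2, F_v)$-representations of conductor at most $\varpi_v$.
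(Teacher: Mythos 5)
Your proposal is correct and follows exactly the route the paper intends: the paper states this as a Fact with only the one-line justification ``The compatibility between the local and global Jacquet--Langlands correspondence yields the following,'' and your argument fills in precisely that compatibility --- global JL from \cite{jacquet:1970}, local JL at the ramified places, and the local dictionary converting ``exact level $\N$ squarefree'', ``weight $2\k$'', and ``trivial central character'' into the three listed local conditions, with the careful central-character check at $\Sym^{2k_v-2}V\otimes\det^{1-k_v}$ being exactly the normalization the paper implicitly relies on.
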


Let $f\in C^\infty_c(G(\A_F))$, integrating $f$ against the action of $G(\A_F)$ gives a linear map
$$
R(f) : L^2(G(F)\bs G(\A_F)) \to L^2(G(F)\bs G(\A_F))
$$
defined by
$$
(R(f)\phi)(x) = \int_{G(\A_F)} f(g) \phi(xg) \ dg.
$$
From the spectral decomposition of $L^2(G(F)\bs G(\A_F))$ one sees that $R(f)$ is an integral operator with kernel,
$$
K_f(x, y) = \sum_{\pi'\in\calA(G)} \sum_{\phi\in\B(\pi')} (R(f)\phi)(x) \overline{\phi(y)},
$$
where for each $\pi'\in\calA(G)$, $\B(\pi')$ denotes an orthonormal basis of $V_{\pi'}$.

Having fixed the embedding $E \hookrightarrow D$ we get an injection $\A_E^\times \hookrightarrow D^\times(\A_F)$. We define a distribution
$$
I(f) = \int_{\A_F^\times E^\times \bs \A_E^\times} \int_{\A_F^\times E^\times \bs \A_E^\times}
K_f(t_1, t_2) \Omega(t_1^{-1} t_2) \ dt_1 \ dt_2.
$$
The spectral expansion for the kernel $K_f(x, y)$ gives,
$$
I(f) = \sum_{\pi'\in\mathcal A(G)} \sum_{\phi\in\B(\pi')} \int_{\A_F^\times E^\times \bs \A_E^\times} (R(f)\phi)(t_1)
\Omega^{-1}(t_1) \ dt_1 \overline{\int_{\A_F^\times E^\times \bs \A_E^\times} \phi(t_2)
\Omega^{-1}(t_2) \ dt_2}.
$$
For each $\pi'\in\calA(G)$ we define
$$
I_{\pi'}(f) = \sum_{\phi\in\B(\pi')} \int_{\A_F^\times E^\times \bs \A_E^\times} (R(f)\phi)(t_1)
\Omega^{-1}(t_1) \ dt_1 \overline{\int_{\A_F^\times E^\times \bs \A_E^\times} \phi(t_2)
\Omega^{-1}(t_2) \ dt_2}.
$$
We set
$$
I_{cusp}(f) = \sum_{\pi'\in\calA_{cusp}(G)} I_{\pi'}(f)
$$
and
$$
I_{res}(f) = \sum_{\pi'\in\calA_{res}(G)} I_{\pi'}(f)
$$
so that $I(f) = I_{cusp}(f) + I_{res}(f)$. In the same we also write
$$
K_f(x, y) = K_{f, cusp}(x, y) + K_{f, res}(x, y).
$$

Our goal in this section is to choose a suitable test function $f\in C^\infty_c(G(\A_F))$ such that $R(f)$ kills all $\pi'\in\calA_{cusp}(G)$ such that $\JL(\pi') \not\in \F(\N, 2\k)$. We will then compute $I(f)$ and use a precise version of Waldspurger's formula \cite[Theorem 4.1]{me:periods} to relate $I_{cusp}(f)$ to the $L$-values being considered in this paper.

\subsection{The test function} \label{testfunction}

We fix a finite prime $\p \nmid \N$ of $F$. We will consider test functions
$$
f = \prod_v f_v \in C^\infty_c(G(\A_F))
$$
defined as follows.

At a finite place $v \neq \p$ we take $f_v=\1_v$, the characteristic function of $Z_vR_v^\times$. At $v = \p$ we allow $f_\p$ to be any
element in the Hecke algebra $\H(G(F_\p), Z_\p R^\times_\p)$.

Let $v\in\Sigma_\infty$. We fix an isomorphism $D^\times (\overline{F_v}) \cong \GL(2, \C)$ which gives an irreducible $2$-dimensional representation $V$ of $D^\times (F_v)$. We set $\pi'_{2k_v} = \Sym^{2k_v-2} V \otimes \det^{-k_v + 1}$, which descends to a well defined representation of $G(F_v)$ corresponding, via the local Jacquet-Langlands correspondence, to the weight $2k_v$ discrete series on $\PGL(2,\R)$. We note that $\dim \pi'_{2k_v} = 2k_v - 1$. Let $\langle \ , \ \rangle$ be a $G(F_v)$ invariant inner product of $\pi'_{2k_v}$. As is well known, since $|m_v| < k_v$ the subspace of $\pi'_{2k_v}$,
$$
\pi'_{2k_v}(\Omega_v) = \left\{ w \in \pi'_{2k_v} : \pi'_{2k_v}(t)w = \Omega_v(t) w \text{ for all } t\in E_v^\times \right\}
$$
is 1-dimensional. We fix a unit vector $w_v \in \pi'_{2k_v}(\Omega_v)$ and define $f_v\in C^\infty_c(G(F_v))$ by
$$
f_v(g) = \overline{\langle \pi'_{2k_v}(g) w_v, w_v\rangle}.
$$

Given $f_\p\in\H(G(F_\p), Z_\p R^\times_\p)$ we denote by $\hat{f}_\p$ the function on unramified representations $\pi_\p$ defined by
$$
\hat{f}_\p(\pi_\p) = \frac{1}{\vol(Z_\p\bs Z_\p R_\p^\times)} \Tr \pi_\p (f_\p).
$$
We also view $\hat{f}_\p$ as a function on $[-2, 2]$ in the usual way.

\subsection{Local preliminaries}
\label{localprelims}

Before continuing onto the calculation of $I(f)$ for $f$ as in Section \ref{testfunction} we record some local preliminaries. For an irreducible admissible representation $\sigma$ of $G(F_v)$ acting on the space $V_{\sigma}$ and $f_v\in C^\infty_c(G(F_v))$ we let,
$$
\sigma(f_v) : V_{\sigma} \to V_{\sigma} : w \mapsto \int_{G(F_v)} f_v(g_v) \sigma(g_v)w \ dg_v.
$$

We record the following basic results for use later.

\begin{lemma}
Let $v$ be a finite place of $F$ not dividing $\N$ and let $f_v\in C^\infty_c(G(F_v))$ be as in Section \ref{testfunction}. Let $\sigma$ be an irreducible unitarizable representation of $G(F_v)$. Then $\sigma(f_v)$ kills the orthogonal complement of $\sigma^{R_v^\times}$ in $V_\sigma$ and for $w \in \sigma^{R_v^\times}$,
$$
\sigma(f_v) w = \vol(Z_v\bs Z_v R_v^\times) \hat{f}_v(\sigma) w.
$$
Furthermore $\sigma^{R_v^\times}$ has dimension at most one.
\end{lemma}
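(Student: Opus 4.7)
The plan is to treat the two cases $v\neq\p$ (where $f_v=\1_v$ is the characteristic function of the image of $R_v^\times$ in $G(F_v)$) and $v=\p$ (where $f_\p\in\H(G(F_\p),Z_\p R_\p^\times)$) uniformly, by observing that in both situations $f_v$ is bi-invariant under the image $K_v$ of $R_v^\times$ in $G(F_v)$. The argument then breaks cleanly into three ingredients: vanishing of $\sigma(f_v)$ on $(\sigma^{K_v})^\perp$, the at-most-one-dimensionality of $\sigma^{K_v}$, and identification of the scalar action on $\sigma^{K_v}$. The first and third are purely formal; the second is the only real input, and I expect it to be the main obstacle.

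First I would prove vanishing on $(\sigma^{K_v})^\perp$. Left $K_v$-invariance of $f_v$ gives, via the standard change of variable in the integral defining $\sigma(f_v)$, the identity $\sigma(k)\sigma(f_v)=\sigma(f_v)$ for every $k\in K_v$, whence $\sigma(f_v) V_\sigma\subseteq \sigma^{K_v}$. On the other hand, $(\sigma^{K_v})^\perp$ is $K_v$-stable by unitarity of $\sigma$, and right $K_v$-invariance of $f_v$ then forces $\sigma(f_v)$ to preserve $(\sigma^{K_v})^\perp$; combining, $\sigma(f_v)$ sends it into $\sigma^{K_v}\cap(\sigma^{K_v})^\perp=0$. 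Granted the dimension bound, the scalar claim is immediate: $\sigma(f_v)$ acts by some $\lambda$ on the line $\sigma^{K_v}$ and vanishes on its complement, so $\Tr\sigma(f_v)=\lambda$, and the definition $\hat f_v(\sigma)=\Tr\sigma(f_v)/\vol(Z_v\bs Z_vR_v^\times)$ identifies $\lambda=\vol(Z_v\bs Z_vR_v^\times)\hat f_v(\sigma)$. In the case $v\neq\p$ one checks directly that $\hat f_v(\sigma)=1$ when $\sigma^{K_v}\neq 0$, consistent with the formula.

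The main obstacle is the dimension bound $\dim\sigma^{K_v}\leq 1$. Since $v\nmid\N$ the algebra $D_v$ splits and $R_v^\times$ is conjugate to $\GL_2(\O_{F_v})$, so $K_v$ is (conjugate to) the maximal compact subgroup $\PGL_2(\O_{F_v})$ of $G(F_v)\cong\PGL_2(F_v)$. The classical Gelfand pair property of $(\PGL_2(F_v),\PGL_2(\O_{F_v}))$ yields commutativity of $\H(G(F_v),K_v)$ under convolution. Since $\sigma$ is irreducible admissible with $\sigma^{K_v}\neq 0$, the space $\sigma^{K_v}$ is an irreducible module for this commutative Hecke algebra, and irreducible modules over a commutative algebra are one-dimensional. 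Once this Gelfand pair input is in hand, the rest of the lemma follows by the formal steps above.
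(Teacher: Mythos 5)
Your proof is correct in substance and supplies the standard argument that the paper declines to give: the paper's own ``proof'' is a single sentence noting that for $v \nmid \N$ one has $G(F_v) \cong \PGL(2, F_v)$ and declaring the result well known. So there is nothing to compare at the level of technique; you are filling in the citation, and the Gelfand-pair reduction you use (commutativity of $\H(\PGL_2(F_v), \PGL_2(\O_{F_v}))$, irreducible modules over a commutative algebra being lines, conjugacy of $K_v$ to the standard maximal compact since $D_v$ splits) is exactly the intended folklore.

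One step is stated imprecisely. You argue that right $K_v$-invariance of $f_v$, together with $K_v$-stability of $(\sigma^{K_v})^\perp$, ``forces $\sigma(f_v)$ to preserve $(\sigma^{K_v})^\perp$.'' As a general principle that implication is false: right $K_v$-invariance gives $\sigma(f_v)\sigma(k) = \sigma(f_v)$, which says nothing a priori about $\sigma(f_v)$ stabilizing $K_v$-subrepresentations. The correct deduction is to average that identity over $K_v$: writing $P = \vol(K_v)^{-1}\int_{K_v} \sigma(k)\, dk$ for the orthogonal projection onto $\sigma^{K_v}$, one gets $\sigma(f_v) = \sigma(f_v)\circ P$, and since $P$ annihilates $(\sigma^{K_v})^\perp$ so does $\sigma(f_v)$. (Equivalently, $\1_{K_v}$ being a right unit for $f_v$ under convolution.) The conclusion you want is exactly this, and the ingredient you invoke --- right $K_v$-invariance --- is the right one; only the intermediate ``preservation'' statement should be replaced by the projection identity. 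Everything else, including the normalization check that $\hat f_v(\sigma) = 1$ at $v\neq\p$ when $\sigma^{K_v}\neq 0$ and the admissibility needed for $\sigma^{K_v}$ to be a simple Hecke module, is fine.
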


\begin{proof}
For $v\nmid\N$, $G(F_v) \cong \PGL(2, F_v)$ and this result is well known.
\end{proof}

\begin{lemma}
Let $v$ be a finite place of $F$ dividing $\N$ and let $f_v\in C^\infty_c(G(F_v))$ be as in Section \ref{testfunction}. Let $\sigma$ be an irreducible unitarizable representation of $G(F_v)$. Then $\sigma(f_v)$ kills the orthogonal complement of $\sigma^{R_v^\times}$ in $V_\sigma$ and for $w \in \sigma^{R_v^\times}$,
$$
\sigma(f_v) w = \vol(Z_v\bs Z_v R_v^\times) w.
$$
Furthermore if $\sigma^{R_v^\times} \neq 0$ then $\sigma = \delta \circ N_{D_v}$ for an unramified character $\delta$ of $F_v^\times$.
\end{lemma}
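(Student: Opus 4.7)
The plan is to first interpret $\sigma(f_v)$ as averaging over a compact subgroup, and then exploit the rigid structure of the local quaternion division algebra $D_v$.

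Since $v \mid \N$, $D_v$ is a quaternion division algebra over $F_v$, $R_v$ is the unique maximal order, and $R_v^\times$ is the unique maximal compact subgroup of $D_v^\times$. The test function $f_v$ is the characteristic function of the compact subgroup $H := Z_v R_v^\times/Z_v$ of $G(F_v)$, so
$$
\sigma(f_v) w = \int_H \sigma(g) w \ dg.
$$
Because $\sigma$ is unitarizable, the right side equals $\vol(Z_v \bs Z_v R_v^\times)$ times the orthogonal projection of $w$ onto the subspace of $H$-fixed vectors, which is precisely $\sigma^{R_v^\times}$. This immediately yields both the vanishing on the orthogonal complement of $\sigma^{R_v^\times}$ and the stated formula on $\sigma^{R_v^\times}$.

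For the last assertion I would use that, being the unique maximal compact subgroup of $D_v^\times$, $R_v^\times$ is normal in $D_v^\times$. Hence $\sigma^{R_v^\times}$ is a $G(F_v)$-invariant subspace of $V_\sigma$; if nonzero, irreducibility of $\sigma$ forces $\sigma^{R_v^\times} = V_\sigma$, so $F_v^\times R_v^\times$ acts trivially on $V_\sigma$. Choosing a uniformizer $\varpi_{D_v}$ of $D_v$ satisfying $\varpi_{D_v}^2 \in \varpi_v R_v^\times$, one has the coset decomposition $D_v^\times = F_v^\times R_v^\times \sqcup \varpi_{D_v} F_v^\times R_v^\times$, so $\sigma$ descends to a representation of a group of order two and is therefore one-dimensional.

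To conclude, I would invoke the standard fact that every one-dimensional representation of $D_v^\times$ factors through the reduced norm $N_{D_v}:D_v^\times\to F_v^\times$, since for a local quaternion division algebra the commutator subgroup coincides with $\ker N_{D_v}$. Writing $\sigma = \delta \circ N_{D_v}$ for a character $\delta$ of $F_v^\times$, triviality on $Z_v = F_v^\times$ gives $\delta^2 = 1$, and existence of a $R_v^\times$-fixed vector forces $\delta$ trivial on $N_{D_v}(R_v^\times) = U_{F_v}$, i.e., $\delta$ unramified. No step here should be a serious obstacle: the argument is entirely structural, and the three background facts it rests on (normality of $R_v^\times$, the order-$2$ quotient $D_v^\times/(F_v^\times R_v^\times)$, and the surjection $N_{D_v}(R_v^\times)=U_{F_v}$) are classical for local quaternion division algebras; the only mild bookkeeping point is making sure the Haar measure normalization in the first paragraph matches the conventions fixed in Section \ref{notation}.
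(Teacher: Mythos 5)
Your proof is correct and follows essentially the same structural route as the paper: both deduce the last claim from the normality of $R_v^\times$ in $D_v^\times$ (so that $\sigma^{R_v^\times}$ is $G(F_v)$-invariant and, by irreducibility, the whole space) and then read off the factorization $\sigma = \delta \circ N_{D_v}$ from the structure of $D_v^\times$ modulo $R_v^\times$. The paper does this in one step by citing the exact sequence $1 \to R_v^\times \to D_v^\times \to F_v^\times/U_{F_v} \to 1$ induced by the reduced norm, whereas you pass through the index-two subgroup $F_v^\times R_v^\times$ and the fact that $[D_v^\times, D_v^\times] = \ker N_{D_v}$; the two are equivalent, and your version also spells out directly why $\delta$ is unramified.
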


\begin{proof}
The first part of the lemma is clear. It remains to prove the last statement. In this case we use the exact sequence,
$$
\xymatrix{1 \ar[r] & R_v^\times \ar[r] & D_v^\times \ar[r]^{N_{D_v}} & F^\times_v/U_{F_v} \ar[r] & 1}.
$$
From which it follows that if $\sigma^{R_v^\times} \neq 0$ then $\sigma = \delta \circ N_{D_v}$ for a character $\delta$ of $F^\times_v/U_{F_v}$.
\end{proof}

\begin{lemma}
Let $v$ be an infinite place of $F$ and let $f_v\in C^\infty_c(G(F_v))$ be as in Section \ref{testfunction}. Let $\sigma$ be an irreducible unitarizable representation of $G(F_v)$. Then $\sigma(f_v)$ kills the space of $\sigma$ unless $\sigma \cong \pi'_{2k_v}$. Furthermore for $\sigma = \pi'_{2k_v}$ the map $\pi'_{2k_v}(f_v)$ kills the orthogonal complement of $\pi'_{2k_v}(\Omega_v)$ and for $w \in \pi'_{2k_v}(\Omega_v)$,
$$
\pi'_{2k_v}(f_v) w = \frac{\vol(G(F_v))}{2k_v - 1} w.
$$
\end{lemma}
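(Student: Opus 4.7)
The plan is to recognize that since $v$ is an infinite place and $D$ is ramified at $v$, the quaternion algebra $D_v$ is the Hamilton quaternions, so $G(F_v) = D_v^\times/Z_v$ is compact. Moreover, by construction $f_v$ is (the complex conjugate of) a matrix coefficient of the irreducible finite-dimensional unitary representation $\pi'_{2k_v}$, namely $f_v(g) = \overline{\langle \pi'_{2k_v}(g) w_v, w_v\rangle}$ with $w_v$ a unit vector spanning the one-dimensional subspace $\pi'_{2k_v}(\Omega_v)$. The lemma should therefore follow from Schur orthogonality for matrix coefficients on a compact group.

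First I would recall the two Schur orthogonality relations. For non-equivalent irreducible unitary representations $\sigma$ and $\pi'_{2k_v}$ of $G(F_v)$, the integral of a matrix coefficient of $\sigma$ against the conjugate of a matrix coefficient of $\pi'_{2k_v}$ vanishes. Applied to the inner product $\langle \sigma(f_v) w, w'\rangle$, this immediately gives $\sigma(f_v) = 0$ whenever $\sigma \not\cong \pi'_{2k_v}$, which yields the first assertion of the lemma.

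For the case $\sigma = \pi'_{2k_v}$, I would compute, for any $w, w' \in V_{\pi'_{2k_v}}$,
\[
\langle \pi'_{2k_v}(f_v) w, w'\rangle = \int_{G(F_v)} \overline{\langle \pi'_{2k_v}(g) w_v, w_v\rangle} \, \langle \pi'_{2k_v}(g) w, w'\rangle \, dg,
\]
and apply the ``same representation'' Schur relation
\[
\int_{G(F_v)} \overline{\langle \pi'_{2k_v}(g) v_1, v_2\rangle} \, \langle \pi'_{2k_v}(g) v_1', v_2'\rangle \, dg = \frac{\vol(G(F_v))}{\dim \pi'_{2k_v}} \langle v_1', v_1\rangle \, \overline{\langle v_2', v_2\rangle}.
\]
Taking $v_1 = v_2 = w_v$, $v_1' = w$, $v_2' = w'$, and using $\|w_v\| = 1$ together with $\dim \pi'_{2k_v} = 2k_v - 1$, this produces
\[
\pi'_{2k_v}(f_v) w = \frac{\vol(G(F_v))}{2k_v - 1} \langle w, w_v\rangle \, w_v.
\]

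Since $w_v$ is a unit basis vector of the one-dimensional subspace $\pi'_{2k_v}(\Omega_v)$, the expression $\langle w, w_v\rangle w_v$ is the orthogonal projection of $w$ onto that line. Hence $\pi'_{2k_v}(f_v)$ kills the orthogonal complement of $\pi'_{2k_v}(\Omega_v)$, and on any $w \in \pi'_{2k_v}(\Omega_v)$ it acts as multiplication by $\vol(G(F_v))/(2k_v - 1)$, completing the proof. The only delicate point in the argument is keeping the complex conjugation conventions in Schur orthogonality consistent with the complex conjugation built into the definition of $f_v$; this is the main place where a numerical error could enter, but it is straightforward bookkeeping rather than a genuine obstacle.
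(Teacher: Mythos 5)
Your proof is correct and follows essentially the same route as the paper: both use that $f_v$ is a matrix coefficient of $\pi'_{2k_v}$ to kill $\sigma\not\cong\pi'_{2k_v}$, and then Schur orthogonality within $\pi'_{2k_v}$ to identify $\pi'_{2k_v}(f_v)$ as $\frac{\vol(G(F_v))}{2k_v-1}$ times the projection onto $\C w_v$. You simply write out the ``same representation'' Schur relation explicitly, where the paper computes $\langle \pi'_{2k_v}(f_v) w_v, w_v\rangle$ directly and argues the kernel statement separately.
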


\begin{proof}
Since $f_v$ is a matrix coefficient of $\pi'_{2k_v}$ it follows that $\sigma(f_v)$ kills the space of $\sigma$ unless $\sigma \cong \pi'_{2k_v}$. On the other hand if $0 \neq w\in\pi'_{2k_v}(\Omega_v)$ then
$$
f_v(g) = \frac{\overline{\langle \pi'_{2k_v}(g)w, w \rangle}}{\langle w, w\rangle}.
$$
Since $\pi'_{2k_v}(\Omega_v)$ is spanned by $w$ it follows that $\pi'_{2k_v}(f_v)$ kills the orthogonal complement of $\pi'_{2k_v}(\Omega_v)$ in $\pi'_{2k_v}$.
Finally,
\begin{align*}
\langle \pi'_{2k_v}(f_v) w, w\rangle &= \int_{G(F_v)} f_v(g) \langle \pi'_{2k_v}(g)w, w\rangle \ dg\\
&= \int_{G(F_v)} \frac{|\langle \pi'_{2k_v}(g)w, w\rangle|^2}{\langle w, w\rangle} \ dg\\
&= \frac{\vol(G(F_v))}{\dim \pi'_{2k_v}} \langle w, w\rangle.
\end{align*}
The last part of the Lemma now follows.
\end{proof}

As a temporary expedient we set,
\begin{align*}
\lambda(\N, \k) &= \prod_{v < \infty} \vol(Z_v \bs R_v^\times Z_v) \prod_{v\in\Sigma_\infty}
\frac{\vol(G(F_v))}{2k_v-1}\\
&= \frac{1}{{|\Delta_F|}^{\frac{3}{2}}L(2, 1_F)} \prod_{v \mid \N} \frac{1}{q_v - 1}
\prod_{v\in\Sigma_\infty} \frac{4 \pi}{2k_v-1}.
\end{align*}

\subsection{Global calculations}

Having chosen $f\in C^\infty_c(G(\A_F))$ in Section \ref{testfunction} we now set about computing
$$
I(f) = I_{cusp}(f) + I_{res}(f).
$$
We begin with the calculation of $I_{res}(f)$ which, as we shall see, is often zero. We define $X^{un}(F)$ to be the set of unramified characters $\chi : F^\times \bs \A_F^\times \to \{\pm 1\}$.

\begin{lemma}
For $f\in C^\infty_c(G(\A_F))$ as in Section \ref{testfunction}, $K_{f, res}(x,y) \equiv 0$ unless $2k_v = 2$ for all $v\in\Sigma_{\infty}$, in which case
$$
K_{f, res}(x, y) = \frac{\lambda(\N, \k)}{\vol(G(F)\bs G(\A_F))} \sum_{\delta\in X^{un}(F)} \delta(N_D(x y^{-1}))
\hat{f}_\p(\delta_\p\circ N_D).
$$
\label{kerreslem}
\end{lemma}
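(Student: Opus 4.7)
The plan is to compute $K_{f,\mathrm{res}}(x,y)$ directly from its spectral expansion, using that every $\pi' \in \calA_{res}(G)$ is one-dimensional.

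First I would write out the expansion explicitly. For each $\delta \in \widehat{F^\times\bs\A_F^\times/\R_{>0}}$ with $\delta^2 = 1$, the representation $\pi' = \delta\circ N_D$ has one-dimensional space spanned by a single orthonormal vector $\phi_\delta(g) = c\,\delta(N_D(g))$, where $|c|^2 = \vol(G(F)\bs G(\A_F))^{-1}$. Since $\pi'$ is a character,
\[
(R(f)\phi_\delta)(x) = \phi_\delta(x) \cdot \int_{G(\A_F)} f(g)\delta(N_D(g))\,dg,
\]
so using that $\delta$ is $\pm 1$-valued (hence $\overline{\delta(N_D(y))} = \delta(N_D(y^{-1}))$), the spectral expansion collapses to
\[
K_{f,\mathrm{res}}(x,y) = \frac{1}{\vol(G(F)\bs G(\A_F))}\sum_{\delta} \delta(N_D(xy^{-1}))\,\prod_v \int_{G(F_v)} f_v(g_v)\,\delta_v(N_{D_v}(g_v))\,dg_v.
\]

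Next I would evaluate the local integrals $I_v(\delta_v) := \int f_v(g_v)\delta_v(N_{D_v}(g_v))\,dg_v$ place-by-place using the three lemmas of Section \ref{localprelims} applied to the one-dimensional representation $\sigma = \delta_v\circ N_{D_v}$. At an archimedean place $v$, $D_v$ is the Hamilton quaternions, so $N_{D_v}$ lands in $\R_{>0}$ and $\sigma$ is necessarily the trivial representation. The third lemma then forces $I_v(\delta_v) = 0$ unless $\pi'_{2k_v}$ is itself trivial, i.e.\ unless $k_v = 1$; in that case $\Omega_v$ is also trivial (since $|m_v|<k_v$), and the lemma gives $I_v(\delta_v) = \vol(G(F_v))$. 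At a finite place $v\neq\p$, the first two lemmas show that $\sigma$ has a nonzero $R_v^\times$-fixed vector (equivalently $I_v(\delta_v)\neq 0$) iff $\delta_v$ is unramified, in which case $I_v(\delta_v) = \vol(Z_v\bs Z_v R_v^\times)$. At $v = \p$ the same reasoning applies, now yielding $I_\p(\delta_\p) = \vol(Z_\p\bs Z_\p R_\p^\times)\,\hat{f}_\p(\delta_\p\circ N_{D_\p})$ when $\delta_\p$ is unramified.

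Combining these, the product $\prod_v I_v(\delta_v)$ vanishes identically unless every $k_v = 1$ and $\delta$ is unramified at every finite place, i.e.\ $\delta\in X^{un}(F)$. When those conditions hold, the product equals
\[
\Bigl(\prod_{v<\infty}\vol(Z_v\bs Z_v R_v^\times)\Bigr)\Bigl(\prod_{v\in\Sigma_\infty}\vol(G(F_v))\Bigr)\hat{f}_\p(\delta_\p\circ N_D) = \lambda(\N,\k)\,\hat{f}_\p(\delta_\p\circ N_D),
\]
where the second equality uses the formula for $\lambda(\N,\k)$ specialized to $2k_v = 2$ for all $v$. Substituting into the spectral expansion gives the claimed formula.

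The only subtle point is the archimedean vanishing: one must observe that because $D$ is ramified at every infinite place of the totally real field $F$, the reduced norm takes values in $\R_{>0}$, so $\delta_v\circ N_{D_v}$ is automatically trivial, and thus the third lemma's dichotomy (kill everything, unless $\sigma\cong \pi'_{2k_v}$) forces $\pi'_{2k_v}$ itself to be trivial. This is what produces the dichotomy in $\k$ and isolates the case $2k_v = 2$ for all $v$. Everything else is bookkeeping with the measure normalizations fixed in Section \ref{notation}.
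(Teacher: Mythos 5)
Your proposal is correct and takes essentially the same approach as the paper: factor $(R(f)\phi_\delta)$ into local integrals, use surjectivity of the reduced norm onto units at finite places to force $\delta$ unramified, and use positivity of the reduced norm at real places (so $\delta_v \circ N_{D_v}$ is trivial) together with Schur orthogonality of the matrix coefficient $f_v$ to force $\pi'_{2k_v}$ trivial, i.e.\ $k_v=1$. The only cosmetic difference is that you invoke the three lemmas of Section~\ref{localprelims} wholesale, whereas the paper redoes those local calculations inline; the underlying computations and the normalization of the one-dimensional residual vector are identical.
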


\begin{proof} We recall that
$$
\calA_{res}(G) = \left\{ \delta \circ N_D : \delta : F^\times \bs \A_F^\times \to \{\pm 1\}\right\}.
$$
We fix a character $\delta : F^\times \bs \A_F^\times \to \{\pm 1\}$. By definition we have
\begin{align*}
(R(f)(\delta \circ N_D))(x) &= \int_{G(\A_F)} f(g) \delta(N_D(x g)) \ dg\\
&= \prod_v \int_{G(F_v)} f_v(g_v) \delta_v(N_{D_v}(x g_v)) \ dg_v.
\end{align*}
We now set about computing the local integrals. Suppose first that $v$ is a non-archimedean place. If $v \neq \p$ then $f_v$ is the characteristic function of $Z_v R_v^\times$. Hence,
$$
\int_{G(F_v)} f_v(g_v) \delta_v(N_{D_v}(x g_v)) \ dg_v = \delta_v(N_{D_v}(x)) \int_{Z_v\bs Z_v R_v^\times} \delta_v(N_{D_v}(g_v)) \ dg_v.
$$
Since the norm map $N_{D_v} : R_v^\times \to U_{F_v}$ is surjective,
$$
\int_{Z_v\bs Z_v R_v^\times} \delta_v(N_{D_v}(x g_v)) \ dg_v =
\left\{
  \begin{array}{ll}
    0, & \hbox{if $\delta_v$ is ramified;} \\
    \delta_v(N_{D_v}(x)) \vol(Z_v\bs Z_v R_v^\times), & \hbox{if $\delta_v$ is unramified.}
  \end{array}
\right.
$$
If $v = \p$, then
\begin{align*}
\int_{G(F_v)} f_\p(g_\p) \delta_\p(N_{D_\p}(x g_\p)) \ dg_\p &= \delta_\p(N_{D_\p}(x)) \int_{G(F_\p)} f_\p(g_\p) \delta_\p(N_{D_\p}(g_\p)) \ dg_\p\\
&= \delta_\p(N_{D_\p}(x)) \Tr(\delta_\p\circ N_{D_\p})(f_\p).
\end{align*}
Clearly, since $f_\p$ is bi-$R_\p^\times$-invariant we have $\Tr(\delta_\p\circ N_{D_\p})(f_\p) = 0$ unless $\delta_\p$ is unramified, in which case
$$
\int_{G(F_v)} f_\p(g_\p) \delta_\p(N_{D_\p}(x g_\p)) \ dg_\p = \delta_\p(N_{D_\p}(x)) \vol(Z_\p R_\p^\times \bs R_\p^\times) \hat{f_\p}(\delta_\p\circ N_{D_\p}).
$$

Finally, let $v$ be an archimedean place of $F$. In this case we have $F_v \cong \R$ and, under this isomorphism, $N_{D_v}(D_v^\times) = \R^+$. Hence since $\delta_v$ is quadratic $\delta_v(N_{D_v}(g_v)) = 1$ for all $g_v \in D_v^\times$. Thus for $v\in\Sigma_\infty$,
\begin{align*}
\int_{G(F_v)} f_v(g_v) \delta_v(N_{D_v}(x g_v)) \ dg_v = \int_{G(F_v)} \overline{\langle \pi'_{2k_v}(g) w_v, w_v\rangle} \ dg_v,
\end{align*}
by definition of $f_v$. Clearly this integral is zero unless $\pi'_{2k_v}$ is the trivial representation which is the case if and only if $2k_v = 2$. Thus for $v\in\Sigma_\infty$,
\begin{align*}
\int_{G(F_v)} f_v(g_v) \delta_v(N_{D_v}(x g_v)) \ dg_v =
\left\{
  \begin{array}{ll}
    0, & \hbox{if $2k_v > 2$;} \\
    \vol(G(F_v)), & \hbox{if $2k_v = 2$.}
  \end{array}
\right.
\end{align*}

Putting these local calculations together shows that $(R(f)(\delta \circ N_D))(x)$ is zero unless $\delta$ is everywhere unramified and $k_v = 1$ for all $v\in\Sigma_\infty$ in which case
$$
(R(f)(\delta \circ N_D))(x) = \delta(N_D(x)) \hat{f_\p}(\delta_\p\circ N_{D_\p}) \prod_{v < \infty} \vol(Z_v \bs Z_v R_v^\times) \prod_{v\in\Sigma_\infty} \vol(G(F_v)).
$$

Finally to finish the proof it remains to observe that
$$
K_{f, res}(x, y) = \sum_{\delta} \frac{(R(f)(\delta \circ N_D))(x) \overline{(\delta \circ N_D)(y)}}{\vol(G(F)\bs G(\A_F))}.
$$
\end{proof}

We can now compute $I_{res}(f)$.

\begin{lemma}
For $f\in C^\infty_c(G(\A_F))$ as in Section \ref{testfunction},
$$
I_{res}(f) = C(\k, \Omega, f_\p) \lambda(\N, \k) \frac{\vol(\A_F^\times E^\times \bs
\A_E^\times)^2}{\vol(G(F)\bs G(\A_F))},
$$
where $C(\k, \Omega, f_\p) = 0$ unless $k_v = 1$ for all $v\in\Sigma_\infty$ and $\Omega$ is of the
form $\Omega = \delta\circ N_{E/F}$ for an everywhere unramified character $\delta$ of
$F^\times\bs\A_F^\times$ of order at most 2. In this latter case we have
$$
C(\k, \Omega, f_\p) = \left\{
  \begin{array}{ll}
    \hat{f}_\p(\delta_\p\circ N_D) + \hat{f}_\p(\eta_\p\delta_\p\circ N_D), & \hbox{if $E/F$ is unramified everywhere;} \\
    \hat{f}_\p(\delta_\p\circ N_D), & \hbox{otherwise.}
  \end{array}
\right.
$$
\label{lemcont}
\end{lemma}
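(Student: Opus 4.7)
The plan is to unwind the integral defining $I_{res}(f)$ using the explicit formula for $K_{f,res}$ furnished by Lemma \ref{kerreslem}, exploit the fact that $N_D$ restricted to $\A_E^\times$ coincides with $N_{E/F}$ so that the double integral factors, and then use class field theory to enumerate the characters $\delta \in X^{un}(F)$ that contribute.

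First, Lemma \ref{kerreslem} immediately gives $K_{f,res}\equiv 0$, hence $I_{res}(f)=0$, unless $k_v=1$ for all $v\in\Sigma_\infty$, so this case is settled with $C(\k,\Omega,f_\p)=0$. Assuming from now on that all $k_v=1$, I would substitute the formula of Lemma \ref{kerreslem} into the definition of $I_{res}(f)$ and interchange the (finite) sum with the integral to obtain
\[
I_{res}(f)=\frac{\lambda(\N,\k)}{\vol(G(F)\bs G(\A_F))}\sum_{\delta\in X^{un}(F)}\hat{f}_\p(\delta_\p\circ N_D)\,\mathcal{J}(\delta),
\]
where $\mathcal{J}(\delta)$ denotes the double integral of $\delta(N_D(t_1 t_2^{-1}))\,\Omega(t_1^{-1}t_2)$ over $(\A_F^\times E^\times\bs\A_E^\times)^2$.

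Second, since the embedding $E\hookrightarrow D$ identifies $N_D|_E$ with $N_{E/F}$, and since $\delta\circ N_{E/F}$ is trivial on $\A_F^\times$ (as $\delta$ is quadratic) and on $E^\times$, the integrand splits as the product of a function of $t_1$ and a function of $t_2$, each descending to the quotient $\A_F^\times E^\times\bs\A_E^\times$. Orthogonality of characters on this compact group then gives $\mathcal{J}(\delta)=\vol(\A_F^\times E^\times\bs\A_E^\times)^2$ when $\delta\circ N_{E/F}=\Omega$ on $\A_E^\times$, and $\mathcal{J}(\delta)=0$ otherwise. This already shows that $I_{res}(f)$ vanishes unless $\Omega$ is of the form $\delta\circ N_{E/F}$ for some unramified quadratic character $\delta$ of $F^\times\bs\A_F^\times$, matching the statement.

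Third, I would enumerate the surviving $\delta$. Fixing one $\delta_0\in X^{un}(F)$ with $\delta_0\circ N_{E/F}=\Omega$, the full set of contributing $\delta$ is the coset $\delta_0\cdot\{\chi\in X^{un}(F):\chi\circ N_{E/F}=1\}$. By class field theory, the characters $\chi$ of $F^\times\bs\A_F^\times$ killed by composition with $N_{E/F}$ are exactly those factoring through $\Gal(E/F)$, namely $\{1,\eta\}$; intersecting with $X^{un}(F)$ gives $\{1,\eta\}$ when $\eta$ is everywhere unramified (i.e.\ $E/F$ is unramified) and $\{1\}$ otherwise. Assembling the sum $\sum_\delta \hat{f}_\p(\delta_\p\circ N_D)$ over these cosets yields the two cases displayed in the lemma.

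The main bookkeeping step is the class-field-theoretic count in the last paragraph together with the verification that $\delta\circ N_{E/F}$ really does descend to the quotient on which $\Omega$ lives; the rest is a direct unwinding of definitions and factoring the double integral via the factorization $\delta(N_D(t_1 t_2^{-1}))=(\delta\circ N_{E/F})(t_1)\overline{(\delta\circ N_{E/F})(t_2)}$.
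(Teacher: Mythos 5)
Your proof is correct and follows essentially the same route as the paper's: substitute Lemma \ref{kerreslem}, use $N_D|_{\A_E^\times}=N_{E/F}$ and orthogonality on the compact quotient to reduce to $\Omega=\delta\circ N_{E/F}$, then count the contributing $\delta\in X^{un}(F)$ by noting the kernel of $\chi\mapsto\chi\circ N_{E/F}$ is $\{1,\eta\}$ and intersecting with the unramified characters. The only cosmetic difference is that the paper writes the factored double integral as $\bigl|\int\delta(N_D(t))\Omega(t^{-1})\,dt\bigr|^2$, which is the same quantity you call $\mathcal{J}(\delta)$.
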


\begin{proof}
By definition
$$
I_{res}(f) = \int_{E^\times \A_F^\times \bs \A_E^\times} \int_{E^\times \A_F^\times \bs \A_E^\times} K_{f, res}(t_1, t_2) \Omega(t_1^{-1} t_2) \ dt_1 \ dt_2.
$$
From Lemma \ref{kerreslem} we see that $I_{res}(f) = 0$ unless $k_v = 1$ for all $v\in\Sigma_\infty$, in which case,
$$
I_{res}(f) = \frac{\lambda(\N, \k)}{\vol(G(F)\bs G(\A_F))} \sum_{\delta \in X^{un}(F)} \hat{f}_\p(\delta_\p \circ N_{D_\p}) \left| \int_{E^\times \A_F^\times \bs \A_E^\times} \delta(N_D(t)) \Omega(t^{-1}) \ dt\right|^2.
$$
Considering $E^\times$ as a subgroup of $D^\times$ we note that $N_D|_{E^\times} : E^\times \to F^\times$ is equal to $N_{E/F} : E^\times \to F^\times$. Hence,
$$
\left| \int_{E^\times \A_F^\times \bs \A_E^\times} \delta(N_D(t)) \Omega(t^{-1}) \ dt\right|^2 =
\left\{
  \begin{array}{ll}
    \vol(E^\times \A_F^\times \bs \A_E^\times)^2, & \hbox{if $\Omega = \delta \circ N_{E/F}$;} \\
    0, & \hbox{otherwise.}
  \end{array}
\right.
$$
Finally it suffices to note that if $\Omega = \delta \circ N_{E/F}$ with $\delta \in X^{un}(F)$ then the only other quadratic character $\chi$ such that $\Omega = \chi \circ N_{E/F}$ is $\chi = \delta \eta$, and since $\delta$ is assumed to be unramified, $\delta \eta\in X^{un}(F)$ if and only if $\eta$ is unramified.
\end{proof}

We now set about computing
$$
I_{cusp}(f) = \sum_{\pi'\in\calA_{cusp}(G)} I_{\pi'}(f).
$$
It is clear from the results of Section \ref{localprelims} and Fact \ref{factJL} that if $\pi'\in\calA_{cusp}(G)$ then $R(f)$ is zero on the space of $\pi'$ unless $\pi'\in\F'(\N,2\k)$. Hence,

\begin{lemma}
For $f\in C^\infty_c(G(\A_F))$ as in Section \ref{testfunction},
$$
I_{cusp}(f) = \sum_{\pi'\in\F'(\N, 2\k)} I_{\pi'}(f).
$$
\label{lemcuspspec}
\end{lemma}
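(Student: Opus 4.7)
The plan is to exploit the factorization $f = \prod_v f_v$ and the fact that each local operator $\pi'_v(f_v)$ has been essentially characterized in Section \ref{localprelims}. Since we have the spectral expansion
\[
I_{\pi'}(f) = \sum_{\phi \in \B(\pi')} \left( \int_{\A_F^\times E^\times \bs \A_E^\times} (R(f)\phi)(t_1)\Omega^{-1}(t_1)\,dt_1 \right) \overline{\left(\int_{\A_F^\times E^\times \bs \A_E^\times} \phi(t_2)\Omega^{-1}(t_2)\,dt_2\right)},
\]
it suffices to show that $R(f)$ annihilates the entire space $V_{\pi'}$ whenever $\pi' \in \calA_{cusp}(G)$ fails to lie in $\F'(\N, 2\k)$.

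First I would identify $R(f)|_{V_{\pi'}}$ with $\pi'(f) = \bigotimes_v \pi'_v(f_v)$ under the restricted tensor product $V_{\pi'} \cong \widehat{\bigotimes}_v V_{\pi'_v}$, so it is enough to exhibit a single place $v$ where $\pi'_v(f_v) = 0$. Then I would go through the three lemmas of Section \ref{localprelims} place by place. At each archimedean $v$, the third lemma says $\pi'_v(f_v)$ vanishes unless $\pi'_v \cong \pi'_{2k_v}$. At each $v \mid \N$, the second lemma forces $\pi'_v$ to contain a nonzero $R_v^\times$-fixed vector, which in turn forces $\pi'_v \cong \delta_v \circ N_{D_v}$ for an unramified character $\delta_v$. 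At every other finite place $v \neq \p$, and also at $v = \p$ (where $f_\p \in \H(G(F_\p), Z_\p R_\p^\times)$ is bi-$R_\p^\times$-invariant), the first lemma forces $\pi'_v$ to be unramified.

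Assembling these conditions and comparing with Fact \ref{factJL} shows that if $\pi'(f) \neq 0$ then $\JL(\pi')$ is a cuspidal automorphic representation of $\PGL(2, \A_F)$ which is holomorphic discrete series of weight $2k_v$ at each $v \in \Sigma_\infty$, special (of the form $\delta_v \circ N_{D_v}$) at each $v \mid \N$, and unramified elsewhere — in other words $\JL(\pi') \in \F(\N, 2\k)$, i.e., $\pi' \in \F'(\N, 2\k)$. Consequently $I_{\pi'}(f) = 0$ for every $\pi' \in \calA_{cusp}(G) \setminus \F'(\N, 2\k)$, and the sum defining $I_{cusp}(f)$ truncates to the asserted range.

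There is no real obstacle here; the lemma is a bookkeeping consequence of the local computations already established in Section \ref{localprelims} together with the classification in Fact \ref{factJL}. The only subtlety worth noting is that at the place $\p$ one should verify that being annihilated by every element of $\H(G(F_\p), Z_\p R_\p^\times)$ is equivalent to having no $R_\p^\times$-fixed vectors, which is immediate from the first lemma since the Hecke algebra acts by a scalar on the (at most one-dimensional) space of $R_\p^\times$-fixed vectors.
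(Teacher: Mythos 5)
Your proof is correct and follows exactly the same route as the paper, which disposes of this lemma in a single sentence by invoking the local computations of Section \ref{localprelims} together with the classification in Fact \ref{factJL}; you have simply made explicit the place-by-place case analysis that the paper leaves to the reader.
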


It remains to compute $I_{\pi'}(f)$ for $\pi'\in\F'(\N, 2\k)$.

\begin{lemma}
For $f\in C^\infty_c(G(\A_F))$ as in Section \ref{testfunction} and $\pi'\in\F'(\N, 2\k)$,
$$
I_{\pi'}(f) = \lambda(\N, \k) \frac{L(1/2, \pi_E\otimes\Omega)}{L(1, \pi, Ad)}  \frac{L(2,1_F) L_{S(\Omega)}(1, \eta)^{2}\sqrt{|\Delta_F|}}{2
\sqrt{c(\Omega) |\Delta_E|}}    \prod_{v \mid \N} (1 -
q_v^{-1})  \prod_{v\in\Sigma_\infty} \frac{ \Gamma(2k_v)}{\pi
\Gamma(k_v+m_v)\Gamma(k_v-m_v)},
$$
where $\pi = \JL(\pi')$.
\label{lemperiodLvalue}
\end{lemma}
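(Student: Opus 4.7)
The plan is to expand the spectral sum on a factorizable orthonormal basis, use the local lemmas of Section \ref{localprelims} to collapse the sum to a single surviving test vector per $\pi'$, and then convert the resulting toric period into the central $L$-value via the refined Waldspurger formula of \cite[Theorem 4.1]{me:periods}.

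First I would fix an isomorphism $\pi' \cong \otimes_v \pi'_v$ together with local inner products whose restricted tensor product is the given global inner product, and then take $\B(\pi') = \{\otimes_v w_v : w_v \in \B(\pi'_v)\}$ for orthonormal bases $\B(\pi'_v)$ of each $V_{\pi'_v}$. Because $f = \prod_v f_v$ is factorizable, the action $\pi'(f)$ decomposes as $\otimes_v \pi'_v(f_v)$, and the toric periods factorize over local measures. Applied to a pure tensor $\otimes_v w_v$, the three lemmas of Section \ref{localprelims} tell us that $\pi'(f)(\otimes_v w_v) = 0$ unless $w_v \in (\pi'_v)^{R_v^\times}$ at every finite $v$ and $w_v \in \pi'_{2k_v}(\Omega_v)$ at every archimedean $v$. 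Under our hypotheses each of these subspaces is one-dimensional, so up to a unit scalar there is a unique such pure tensor $w_{\pi'} = \otimes_v w_v$, namely the Gross--Prasad test vector. Combining the eigenvalues supplied by the three local lemmas, $\pi'(f) w_{\pi'} = \lambda(\N,\k)\,\hat{f}_\p(\pi'_\p)\, w_{\pi'}$, so every other basis vector contributes $0$ to $I_{\pi'}(f)$ and
\[
I_{\pi'}(f) = \lambda(\N,\k)\,\hat{f}_\p(\pi'_\p)\,|P_D(w_{\pi'})|^2,
\]
where $P_D(w) = \int_{\A_F^\times E^\times \bs \A_E^\times} w(t)\Omega^{-1}(t)\,dt$ and $\|w_{\pi'}\| = 1$.

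Next I would invoke the refined period identity of \cite[Theorem 4.1]{me:periods}: under exactly the ramification conditions we have imposed on $E$, $\N$, and $\Omega$ (squarefree level with inert primes, $\Omega$ unramified above $\N$, archimedean weights $|m_v| < k_v$), that theorem gives
\[
\frac{|P_D(w_{\pi'})|^2}{\langle w_{\pi'}, w_{\pi'}\rangle} = \frac{L(1/2, \pi_E\otimes\Omega)}{L(1, \pi, \Ad)} \cdot \frac{\sqrt{|\Delta_F|}}{2\sqrt{c(\Omega)|\Delta_E|}} \prod_v \alpha_v(E_v, w_v, \Omega_v),
\]
where the local factor $\alpha_v$ equals $1$ at every finite place $v$ outside $S(\Omega) \cup \{v : v\mid\N\}$, and is tabulated explicitly in \cite{me:periods} at the remaining finite places and at infinity. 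Substituting this into the formula for $I_{\pi'}(f)$ above produces the asserted expression.

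The main obstacle is the final bookkeeping: one must check that the product $\prod_v \alpha_v$ — assembled from the Gross--Prasad test vector at $v\mid\N$, from the local toric computation at ramified places of $\Omega$, and from the matrix coefficient of $\pi'_{2k_v}$ paired against the $\Omega_v$-eigenvector at archimedean $v$ — combines with our specific Tamagawa measure (which depends on $\varepsilon \in F^\times/NE^\times$), with the normalization $R_v \cap E_v = \O_{F_v} + \varpi_v^{n(\Omega_v)}\O_{E_v}$, and with the volumes recorded in Section \ref{notation} (notably $\vol(\A_F^\times E^\times \bs \A_E^\times) = 2L(1,\eta)$ and $\vol(F_v^\times\bs E_v^\times) = 2$ at archimedean $v$) to yield exactly the factors $(1 - q_v^{-1})$ at $v\mid\N$, the archimedean factor $\Gamma(2k_v)/(\pi \Gamma(k_v+m_v)\Gamma(k_v-m_v))$, and the partial zeta contributions collapsing the unramified local $L$-factors into $L(2, 1_F)\, L_{S(\Omega)}(1,\eta)^2$, with no stray constants left over. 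The archimedean computation in particular — identifying $|\langle w_v, w_v'\rangle|^2$ on $\pi'_{2k_v}(\Omega_v)$ explicitly via the $\Sym^{2k_v - 2}$-model — is the most delicate piece.
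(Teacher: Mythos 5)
Your proposal is essentially the same as the paper's proof, and it is correct. You expand the kernel on a factorizable orthonormal basis and use the three lemmas of Section \ref{localprelims} to see that $\pi'(f)$ is (up to the scalar $\lambda(\N,\k)\,\hat{f}_\p(\pi_\p)$) the rank-one orthogonal projection onto the test vector $w_{\pi'}$, then invoke \cite[Theorem 4.1]{me:periods}; the paper phrases the first step equivalently by defining the one-dimensional space $W_{\pi'}$ of vectors fixed by $\prod_{v<\infty}R_v^\times$ and transforming by $\Omega$ at infinity, noting $R(f)$ kills its orthogonal complement. One small point worth flagging: your intermediate formula correctly carries the factor $\hat{f}_\p(\pi_\p)$, which is in fact present in the paper's proof and in Proposition \ref{propspectral} but was dropped from the displayed equation in the lemma statement itself; the rest of your bookkeeping remarks (translating the local $\alpha_v$-factors into the stated $L$-factors, $\Gamma$-quotients, and the $(1-q_v^{-1})$ terms, plus the measure discrepancy contributing $L(2,1_F)$) is exactly what the paper handles by citing the explicit local evaluations from \cite{me:periods}.
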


\begin{proof}
Let $\pi'\in\F'(\N,2\k)$ and let $V_{\pi'}$ denote the space of $\pi'$. Let $W_{\pi'}$ denote the space of $w\in V_{\pi'}$ such that,
\begin{enumerate}
  \item $\pi'(k)w = w$ for all $k\in\prod_{v < \infty} R_v^\times$, and
  \item $\pi'(t)w = \Omega(t)w$ for all $t\in E_v^\times$, $v\in\Sigma_\infty$.
\end{enumerate}
By the results of Section \ref{localprelims} $W_{\pi'}$ is one dimensional. We fix a non-zero element $\phi_{\pi'} \in W_{\pi'}$. By our choice of $f$ it again follows from the results of Section \ref{localprelims} that $R(f)$ kills the orthogonal complement of $\phi_{\pi'}$ in $\pi'$ and
$$
R(f)\phi_{\pi'} = \lambda(\N, \k) \hat{f}_\p(\pi_\p) \phi_{\pi'}.
$$
Hence,
$$
I_{\pi'}(f) = \lambda(\N, \k) \hat{f}_\p(\pi_\p) \frac{\left|\int_{\A_F^\times E^\times \bs
\A_E^\times} \phi_{\pi'}(t) \Omega^{-1}(t) \ dt \right|^2}{(\phi_{\pi'}, \phi_{\pi'})}.
$$
We can now apply \cite[Theorem 4.1]{me:periods} which gives, for $\pi'\in\F'(\N, 2\k)$, an equality between
$$
\frac{\left|\int_{\A_F^\times E^\times \bs \A_E^\times} \phi_{\pi'}(t) \Omega^{-1}(t) \ dt
\right|^2}{(\phi_{\pi'}, \phi_{\pi'})}
$$
and
$$
\frac{L(1/2, \pi_E\otimes\Omega)}{L(1, \pi, Ad)}  \frac{L(2,1_F) L_{S(\Omega)}(1, \eta)^{2}\sqrt{|\Delta_F|}}{2 \sqrt{c(\Omega) |\Delta_E|}} \prod_{v \mid \N} (1 - q_v^{-1})  \prod_{v\in\Sigma_\infty} \frac{ \Gamma(2k_v)}{\pi \Gamma(k_v+m_v)\Gamma(k_v-m_v)},
$$
where $S(\Omega)$ is the set of places of $F$ above which $\Omega$ is ramified and $L(s, \pi, Ad)$ is the adjoint $L$-function of $\pi$. For the ease of the reader we note that for $\pi = \JL(\pi')\in\F(\N, 2\k)$ we have, in the notation of \cite[Theorem 4.1]{me:periods}, $S'(\pi) = \emptyset$; at the places $v\in \Ram(\pi)$, $e_v(E/F) = 1$; and for $v\in\Sigma_\infty$, $C_v(E, \pi, \Omega)$ is equal to the quotient of gamma functions appearing above. The $L(2, 1_F)$ term appears in the formula here, and not in \cite[Theorem 4.1]{me:periods}, due to a difference in the choice of measures on $G(\A_F)$.
\end{proof}

We recall that
$$
\vol(G(F)\bs G(\A_F)) = 2,
$$
and
$$
\vol(\A_F^\times E^\times \bs \A_E^\times) = 2 L(1, \eta),
$$
and that we defined
$$
\lambda(\N, \k) = \frac{1}{{|\Delta_F|}^{\frac{3}{2}}L(2, 1_F)} \prod_{v \mid \N} \frac{1}{q_v - 1}
\prod_{v\in\Sigma_\infty} \frac{4 \pi}{2k_v-1}.
$$
Now by combining our calculation of $I_{res}(f)$ from Lemma \ref{lemcont} with our calculation of $I_{cusp}(f)$ from Lemmas \ref{lemcuspspec} and \ref{lemperiodLvalue} we obtain the following.

\begin{proposition}
For $f$ as in Section \ref{testfunction}, $I(f)$ is equal to the sum of
$$
\frac{L_{S(\Omega)}(1, \eta)^{2}}{2 {|\Delta_F|}^2 \sqrt{c(\Omega) d_{E/F}}} \frac{4^{[F:\Q]}}{|\N|}
\binom{2\k - 2}{\k + \m - 1}  \sum_{\pi\in\F(\N, 2\k)}
\frac{L(1/2, \pi_E\otimes\Omega)}{L(1, \pi, Ad)} \hat{f}_\p(\pi_\p),
$$
and
$$
C(\k, \Omega, f_\p)  \frac{2 L(1, \eta)^2 }{{|\Delta_F|}^{\frac{3}{2}}L(2,1_F)}  \prod_{v
\mid \N} \frac{1}{q_v - 1}  \prod_{v\in\Sigma_\infty} \frac{4 \pi}{2k_v-1},
$$
where $C(\k, \Omega, f_\p)$ is defined  in Lemma \ref{lemcont}.
\label{propspectral}
\end{proposition}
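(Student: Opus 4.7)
My plan is to assemble $I(f) = I_{cusp}(f) + I_{res}(f)$ directly from the three preceding lemmas, after which only a bookkeeping simplification remains. By Lemma \ref{lemcuspspec} together with Fact \ref{factJL}, I can reindex $I_{cusp}(f) = \sum_{\pi'\in\F'(\N,2\k)} I_{\pi'}(f)$ as a sum over $\pi\in\F(\N,2\k)$ via the Jacquet--Langlands bijection $\JL\colon\F'(\N,2\k)\iso\F(\N,2\k)$. Substituting the formula for $I_{\pi'}(f)$ from Lemma \ref{lemperiodLvalue}, together with the Hecke eigenvalue $\hat{f}_\p(\pi_\p)$ produced when $R(f_\p)$ acts on the test vector $\phi_{\pi'}$, yields
\[
I_{cusp}(f) = \lambda(\N,\k)\,C_0 \sum_{\pi\in\F(\N,2\k)}\hat{f}_\p(\pi_\p)\,\frac{L(1/2,\pi_E\otimes\Omega)}{L(1,\pi,Ad)},
\]
where $C_0$ collects the $\pi$-independent factors supplied by \cite[Theorem 4.1]{me:periods}.

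Next I would substitute the explicit form
\[
\lambda(\N,\k) = \frac{1}{|\Delta_F|^{3/2} L(2,1_F)} \prod_{v\mid\N} \frac{1}{q_v-1} \prod_{v\in\Sigma_\infty} \frac{4\pi}{2k_v-1}
\]
and carry out three cancellations: the two copies of $L(2,1_F)$ kill each other; at each finite place $v\mid\N$ one has $(q_v-1)^{-1}(1-q_v^{-1})=q_v^{-1}$, so the product over $v\mid\N$ collapses to $|\N|^{-1}$; and the discriminant identity $|\Delta_E|=d_{E/F}|\Delta_F|^2$ turns $\sqrt{|\Delta_F|}/(|\Delta_F|^{3/2}\sqrt{|\Delta_E|})$ into $1/(|\Delta_F|^2\sqrt{d_{E/F}})$.

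At each archimedean $v$, using $\Gamma(2k_v)=(2k_v-1)!$ and $\Gamma(k_v\pm m_v)=(k_v\pm m_v-1)!$, I would rewrite
\[
\frac{4\pi}{2k_v-1}\cdot\frac{\Gamma(2k_v)}{\pi\,\Gamma(k_v+m_v)\,\Gamma(k_v-m_v)} = 4\binom{2k_v-2}{k_v+m_v-1},
\]
and the product over $\Sigma_\infty$ then produces $4^{[F:\Q]}\binom{2\k-2}{\k+\m-1}$. Combining these simplifications, $I_{cusp}(f)$ becomes precisely the first displayed expression of the statement.

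Finally, for the residual contribution I substitute the global volumes $\vol(G(F)\bs G(\A_F))=2$ and $\vol(\A_F^\times E^\times\bs\A_E^\times)=2L(1,\eta)$ into Lemma \ref{lemcont}, obtaining
\[
I_{res}(f) = C(\k,\Omega,f_\p)\,\lambda(\N,\k)\,\frac{(2L(1,\eta))^2}{2},
\]
and unpacking $\lambda(\N,\k)$ once more reproduces the second expression in the statement verbatim. The only real obstacle is tracking the many factors carefully; there is no conceptual content beyond what the three lemmas have already supplied.
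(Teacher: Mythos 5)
Your proof is correct and follows the same route as the paper: combine Lemma \ref{lemcuspspec} and Lemma \ref{lemperiodLvalue} for $I_{cusp}(f)$, and Lemma \ref{lemcont} with the stated volumes for $I_{res}(f)$, then simplify. The algebraic cleanup you carry out (cancelling $L(2,1_F)$, collapsing $\prod_{v\mid\N}(q_v-1)^{-1}(1-q_v^{-1})=|\N|^{-1}$, using $|\Delta_E|=d_{E/F}|\Delta_F|^2$, and rewriting the archimedean $\Gamma$-quotient as $4\binom{2k_v-2}{k_v+m_v-1}$) is exactly what the paper leaves implicit, and you correctly reinstate the factor $\hat f_\p(\pi_\p)$ which the displayed formula in Lemma \ref{lemperiodLvalue} omits (a typo there; it does appear in that lemma's proof).
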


\section{Geometric side of the trace formula}
\label{geometric}

Recall that
$$
I(f) = \int_{\A_F^\times E^\times \bs \A_E^\times} \int_{\A_F^\times E^\times \bs \A_E^\times}
K_f(t_1, t_2) \Omega(t_1^{-1} t_2) \ dt_1 \ dt_2.
$$
A quick calculation shows that the following matrices are a set of representatives for the double
cosets of $E^\times \backslash D^\times / E^\times$:
\[
\left \{ \begin{pmatrix} 1& 0 \\ 0&1 \end{pmatrix} , \begin{pmatrix} 0& \epsilon \\ 1& 0
\end{pmatrix}\right\} \cup \left \{ \begin{pmatrix} 1 & \epsilon x \\ \overline x &1 \end{pmatrix}
: x\in E^\times/E^1\right \}.
\]
The first two cosets are referred to as \textit{irregular cosets} and the remaining cosets are
called \textit{regular cosets}.

By the geometric expansion of the relative trace formula \cite[Section 1 (8)]{jacquet:2001},
\[
I(f)=\sum_{\xi \in \epsilon NE^\times}I(\xi,f) + \vol(\A_F^\times E^\times \backslash
\A_E^\times)\left[I\left(0, f \right)+\delta (\Omega^2)I\left( \infty,f \right)\right]
\]
where for an idele class character $\chi$, $\delta(\chi)=1$ if $\chi=1$ and $\delta(\chi)=0$
otherwise.  For $\xi=\epsilon x \overline x$,
\[
I\left( \xi, f \right):=\int_{\A_F^\times  \bs \A_E^\times} \int_{\A_F^\times  \bs \A_E^\times}
f\left(t_1 \begin{pmatrix} 1 & \epsilon x \\ \overline x &1 \end{pmatrix}
t_2\right)\Omega(t_1t_2)dt_1dt_2,
\]
\[
I\left( 0, f \right):=\int_{\A_F^\times \bs \A_E^\times} f\left(t\right)\Omega(t)dt,
\]
and
\[
I\left( \infty, f \right):= \int_{\A_F^\times  \bs \A_E^\times} f\left(t \begin{pmatrix}0 & \eps
\\ 1 & 0 \end{pmatrix}\right)\Omega(t)dt.
\]
We remark that these integrals factor as a product of local ones, which we denote with similar notation.
We note that the geometric expansion in \cite{jacquet:2001} has  $\delta(\Omega^2)$ next
to $I(0,f)$ rather than $I(\infty, f)$. This is because  $K_f(t_1, t_2)$ is integrated
against $\Omega(t_1t_2)^{-1}$ rather than against $\Omega(t_1^{-1}t_2)$.

In this section we compute the period integrals $I(g, f)$ for $f$ chosen as in Section \ref{testfunction}. First we make the necessary local calculations in Sections \ref{irregularcosets} and \ref{regularcosets} and then we bring together the local calculations to write down the geometric side of the global relative trace formula in Section \ref{globalcalculations}. We note that the choice of local function $f_v$ is intrinsic to $G(F_v)$, as is the parameterization of the double cosets. Hence for simplicity we can fix the following identifications for the remainder of this section.

For $v \notin \Sigma_\infty$,
\begin{align*}
D_v=\left\{ \begin{pmatrix}  \alpha & \epsilon_v\beta \\ \overline \beta & \overline \alpha
\end{pmatrix} \right \}, E_v= \left\{ \begin{pmatrix} \alpha & 0 \\ 0& \overline \alpha
\end{pmatrix} \right \}, \text{ where }&\epsilon_v=1\text{ if } v \nmid \N,
\\
&\epsilon_v=\varpi_v \text{ if } v \mid \N,
\end{align*}
and for $v\in \Sigma_\infty$,
\[
D_v=\left\{ \begin{pmatrix}  \alpha & -\beta \\ \overline \beta & \overline \alpha \end{pmatrix}
\right \}, E_v= \left\{ \begin{pmatrix} \alpha & 0 \\ 0& \overline \alpha \end{pmatrix} \right \}.
\]

For $v$ that splits in $E$, let $E_v=F_v\oplus F_v$. For $v \notin \Sigma_\infty$, let $\tau_v$ be
such that $\O_{E_v}=\O_{F_v}[\tau_v]$ and $\tau_v$ is a uniformizer in $E_v$ if $E_v/F_v$ is
ramified. If $v$ is inert in $E$, let $\varpi_{E_v}=\varpi_v$. If $v$ ramifies, let
$\varpi_{E_v}=\tau_v$. If $v$ splits let $\varpi_{E_v}\in\O_{E_v}$ be such that $v(N(\varpi_{E_v}))
= 1$. For a valuation $v$ of $F$ that is not split in $E$, let $v_E$ be the corresponding valuation
on $E$. Let
\[
R_v=\left \{ \begin{pmatrix} \alpha & \epsilon_v\beta \\ \overline \beta & \overline \alpha
\end{pmatrix} : \alpha \in \frac{1}{(\overline \tau_v -\tau_v)\varpi_{v}^{n(\Omega_v)}} \left(
\O_{F_v} +\varpi_v^{n(\Omega_v)} \O_{E_v}\right), \alpha + \beta \in \O_{F_v} +
\varpi_v^{n(\Omega_v)} \O_{E_v} \right\}.
\]
This is a maximal order in $D_v$ such that $R_v\cap E_v= \O_{F_v}+\varpi_v^{n(\Omega_v)}\O_{E_v}$.

For the rest of this section we will drop the $v$ from our notation and let $n=n(\Omega_v)$ when it is clear that
we are working locally.

\subsection{Irregular cosets}
\label{irregularcosets}

First we compute the orbital integrals associated to the irregular cosets.
\begin{lemma} \label{lemN0}
Let $v$ divide $\N$. Then
\[
I(0, f_v)=\vol (F_v^\times \bs F_v^\times (1+\varpi_v^{n(\Omega_v)}\O_{E_v}))^\times.
\]
\end{lemma}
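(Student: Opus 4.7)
The plan is a direct unpacking of definitions. Since $f_v = \1_{Z_v R_v^\times}$ with $Z_v = F_v^\times$, we have
\[
I(0, f_v) = \int_{F_v^\times \bs E_v^\times} \1_{F_v^\times R_v^\times}(t)\, \Omega_v(t)\, dt,
\]
so the integrand is supported on the image of $E_v^\times \cap F_v^\times R_v^\times$ in $F_v^\times \bs E_v^\times$. The first task is to identify this support, and the second is to observe that $\Omega_v$ is trivial on it.

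For the support, the key set-theoretic identity is
\[
E_v^\times \cap F_v^\times R_v^\times = F_v^\times (R_v \cap E_v)^\times.
\]
The inclusion $\supseteq$ is obvious. For $\subseteq$, write $t = \alpha r$ with $\alpha \in F_v^\times$ and $r \in R_v^\times$; then $r = \alpha^{-1} t \in E_v \cap R_v^\times$, and since $E_v$ is a field $r^{-1} \in E_v$, so $r \in (R_v \cap E_v)^\times$. Invoking the description $R_v \cap E_v = \O_{F_v} + \varpi_v^{n(\Omega_v)} \O_{E_v}$ from Section \ref{notation}, together with the elementary equality
\[
F_v^\times (\O_{F_v} + \varpi_v^{n} \O_{E_v})^\times = F_v^\times (1 + \varpi_v^{n} \O_{E_v})^\times
\]
(which holds because every unit of this order has the form $u(1+x)$ with $u \in \O_{F_v}^\times \subset F_v^\times$ and $x \in \varpi_v^{n} \O_{E_v}$), pins down the support as $F_v^\times (1 + \varpi_v^{n(\Omega_v)} \O_{E_v})^\times$.

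Next, $\Omega_v$ is trivial on this set: it vanishes on $F_v^\times$ by the global hypothesis $\Omega|_{\A_F^\times} = 1$, and it vanishes on $(\O_{F_v} + \varpi_v^{n(\Omega_v)} \O_{E_v})^\times$ by the very definition of $n(\Omega_v)$. The integrand is therefore the characteristic function of the support, and the integral equals its volume, yielding the claim. No real obstacle arises; the only care required is the field-theoretic step $E_v^\times \cap R_v^\times = (R_v \cap E_v)^\times$. One may also remark that since $v \mid \N$ forces $\Omega_v$ unramified, we in fact have $n(\Omega_v) = 0$ here, so the right-hand side reduces to $\vol(F_v^\times \bs F_v^\times \O_{E_v}^\times)$; the authors presumably state it in general form for uniformity with analogous calculations at other primes.
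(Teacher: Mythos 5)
Your proof is correct and follows the same direct-computation route as the paper, which simply writes the integral and asserts the answer in two lines; you have filled in the implicit steps (identifying $E_v^\times\cap F_v^\times R_v^\times = F_v^\times(R_v\cap E_v)^\times$, invoking $R_v\cap E_v=\O_{F_v}+\varpi_v^{n(\Omega_v)}\O_{E_v}$, and observing that $\Omega_v$ is trivial on this set by the definitions of $n(\Omega_v)$ and of $\Omega$). Your closing remark that $n(\Omega_v)=0$ for $v\mid\N$ is also correct, since $\Omega$ is assumed unramified above $\N$.
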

\begin{proof}
We have
\begin{align*}
I(0,f_v)&=\int_{F_v^\times \bs E_v^\times} f_v \left( \bmx a&0 \\0 & \overline a
\emx\right)\Omega_v(a)d^\times a
\\
&=\vol (F_v^\times \bs F_v^\times (1+\varpi_v^{n(\Omega_v)}\O_{E_v}))^\times.
\end{align*}
\end{proof}
\begin{lemma} \label{lemN}
Let $v$ divide $\N$. Then
\[
I\left( \infty, f_v\right)=0.
\]
\end{lemma}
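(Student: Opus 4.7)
The strategy is a parity-of-valuation argument on the reduced norm. Since $v \mid \N$, the algebra $D_v$ is a division algebra and, by assumption, $E_v/F_v$ is the unramified quadratic extension, so we may take $\varpi_v$ as a uniformizer of $E_v$ as well. I will show that the integrand $f_v(tw)$ is identically zero on $F_v^\times \bs E_v^\times$, where $w = \bmx 0 & \epsilon_v \\ 1 & 0 \emx$ and $\epsilon_v = \varpi_v$.

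First, I compute
\[
t w \;=\; \bmx a & 0 \\ 0 & \bar a \emx \bmx 0 & \varpi_v \\ 1 & 0 \emx \;=\; \bmx 0 & a\varpi_v \\ \bar a & 0 \emx,
\]
so its reduced norm in $F_v^\times$ equals $-\varpi_v N_{E/F}(a)$. Since $E_v/F_v$ is inert, $v(N_{E/F}(a)) = 2 v_E(a)$ is even, hence $v(N_D(tw)) = 2 v_E(a) + 1$ is \emph{odd}.

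Next, I examine the reduced norm of elements of $Z_v R_v^\times$. Any $g \in Z_v R_v^\times$ has the form $g = z r$ with $z \in F_v^\times$ and $r \in R_v^\times$, giving $N_D(g) = z^2 N_D(r)$. Since $r \in R_v^\times$ the element $N_D(r)$ is a unit in $\O_{F_v}$, so $v(N_D(g)) = 2 v(z)$ is \emph{even}. It follows that $tw \notin Z_v R_v^\times$ for every $t \in E_v^\times$, so $f_v(tw) = 0$, and therefore $I(\infty, f_v) = 0$.

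I do not anticipate a real obstacle here; the only thing to watch is that $\varpi_v$ is indeed a uniformizer in $E_v$ (which uses inertness at $v \mid \N$, guaranteed by our global hypotheses) so that the parity computation $v(N_{E/F}(a)) = 2 v_E(a)$ is valid. Compare this with Lemma \ref{lemN0}, where $w$ is replaced by the identity matrix whose reduced norm is a unit, so the analogous obstruction vanishes and one instead gets a nonzero volume.
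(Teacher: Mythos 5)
Your proof is correct and is essentially the same argument as the paper's, just with the parity computation spelled out more explicitly: the paper observes directly that $\begin{pmatrix} 0 & \varpi_v a \\ \bar a & 0 \end{pmatrix} \notin Z_v R_v^\times$ because elements of $Z_v R_v^\times$ have even valuation of reduced norm, and you supply the supporting details (odd valuation $1 + 2v_E(a)$ for $tw$, using inertness, versus even valuation $2v(z)$ for $zr \in Z_v R_v^\times$).
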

\begin{proof}
\[
I\left(\infty, f_v \right)=\int_{F_v^\times\backslash E_v^\times} f_v\left(\begin{pmatrix}
0&\varpi_v a \\ \overline a & 0\end{pmatrix}\right)\Omega_v(a)d^\times a.
\]
Clearly
\[
\begin{pmatrix}
0&\varpi_v a \\ \overline a & 0\end{pmatrix} \notin Z_vR_v^\times
\]
because $Z_vR_v^\times$ only contains matrices, $g$, where $v(\det g)$ is even.
\end{proof}

Before we compute the irregular orbital integrals for finite $v$ away from $\N$, we need the following
technical lemma.
\begin{lemma}
Let $v$ be a finite prime not dividing $\N$. Let $\alpha = a_\alpha +
b_\alpha\tau_v$ and $\beta = a_\beta + b_\beta\tau_v$. Then $F_v^\times R_v^\times \alpha
R_v^\times = F_v^\times R_v^\times \beta R_v^\times$ if and only if
$$
v(N(\alpha)) - 2 \min \{ v(a_\alpha), v(\varpi_v^{-n(\Omega_v)} b_\alpha) \} =   v(N(\beta)) - 2
\min \{ v(a_\beta), v(\varpi_v^{-n(\Omega_v)} b_\beta) \} .
$$
\label{lemdoublecoset}
\end{lemma}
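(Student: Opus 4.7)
The strategy is to reduce the double coset classification to the classical Cartan decomposition for $\GL(2)$ over a non-archimedean local field. Since $v \nmid \N$, the quaternion algebra $D_v$ is split and $R_v$ is a maximal order; as all maximal orders in $M_2(F_v)$ are $\GL(2, F_v)$-conjugate, one may identify $D_v \cong M_2(F_v)$ so that $R_v$ corresponds to $M_2(\O_{F_v})$. The Cartan decomposition then expresses every $g \in D_v^\times$ as $g = k_1 \diag(\varpi_v^a, \varpi_v^b) k_2$ with $k_i \in R_v^\times$ and $a \le b$, and the pair $(a, b)$ is a complete invariant of $R_v^\times g R_v^\times$. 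Moreover $a + b = v(N_D(g))$ and $a$ equals the content of $g$ in $R_v$, namely $\mathrm{cont}_{R_v}(g) := \max\{m \in \Z : g \in \varpi_v^m R_v\}$. Scaling by $F_v^\times$ shifts $(a, b)$ uniformly, so the class of $g$ in $F_v^\times R_v^\times \bs D_v^\times / R_v^\times$ is classified by the single integer $\mu(g) := v(N_D(g)) - 2\,\mathrm{cont}_{R_v}(g) = b - a \ge 0$.

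The proof is thus reduced to computing $\mu(\alpha)$ for $\alpha = a_\alpha + b_\alpha \tau_v \in E_v^\times$. The reduced norm restricted to $E_v$ is $N$, giving $v(N_D(\alpha)) = v(N(\alpha))$. For the content, the key observation is that an element of $E_v$ lies in $R_v$ if and only if it lies in $R_v \cap E_v = \O_{F_v} + \varpi_v^{n(\Omega_v)} \O_{E_v}$. Taking $\{1, \varpi_v^{n(\Omega_v)}\tau_v\}$ as an $\O_{F_v}$-basis of this order and writing
$$
\varpi_v^{-m}\alpha = (\varpi_v^{-m} a_\alpha)\cdot 1 + (\varpi_v^{-m-n(\Omega_v)} b_\alpha)\cdot(\varpi_v^{n(\Omega_v)}\tau_v),
$$
one sees that $\varpi_v^{-m}\alpha \in R_v$ iff $v(a_\alpha) \ge m$ and $v(b_\alpha) - n(\Omega_v) \ge m$. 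Hence $\mathrm{cont}_{R_v}(\alpha) = \min\{v(a_\alpha), v(\varpi_v^{-n(\Omega_v)} b_\alpha)\}$, and substituting into the formula for $\mu(\alpha)$ yields exactly the right-hand side of the displayed equation in the lemma.

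The only mild obstacle is verifying that the Cartan invariant $\mu = v(N_D) - 2\,\mathrm{cont}_{R_v}$ genuinely classifies the double cosets for this particular (non-standard) maximal order $R_v$, whose intersection with $E_v$ is a proper suborder when $n(\Omega_v) > 0$. This reduces to the standard fact that all maximal orders in $M_2(F_v)$ are $\GL(2, F_v)$-conjugate, after which both $N_D$ and $\mathrm{cont}_{R_v}$ become intrinsic invariants of the double coset independent of any identification. The remaining content computation is then an elementary bookkeeping of valuations using the explicit $\O_{F_v}$-basis of $R_v \cap E_v$, and the argument is insensitive to whether $v$ is inert, split, or ramified in $E$.
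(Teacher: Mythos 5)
Your proof is correct and follows essentially the same route as the paper: both reduce to the Cartan decomposition for $\GL(2)$ over $F_v$ and compute the invariant $v(\det g) - 2\min\{v(g_{ij})\}$. The paper writes down a specific embedding $a + b\tau \mapsto \bigl(\begin{smallmatrix} a+b\Tr(\tau)&b\varpi^{-n}\\-b\varpi^n N(\tau)&a \end{smallmatrix}\bigr)$ with $R_v = M_2(\O_{F_v})$ and reads off the minimum valuation of the four matrix entries directly; you instead package this as the content $\mathrm{cont}_{R_v}$ and observe that for $\alpha \in E_v$ the content is determined entirely by $R_v \cap E_v = \O_{F_v} + \varpi_v^{n(\Omega_v)}\O_{E_v}$, which is cleaner and avoids the explicit matrix formula, but the underlying computation is identical.
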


\begin{proof}
For the proof we can take $D = M(2, F)$ and embed $E$ as
$$
E \hookrightarrow M(2, F) : a + b\tau \mapsto \bmx a+b\Tr_{E/F}(\tau)&b\varpi^{-n}\\-b\varpi^n
N(\tau)&a \emx.
$$
We set $R = M(2, \O_F)$, then we have $R\cap E = \O_F + \varpi^n \O_E$. We recall that
$$
\GL(2, F) = \bigsqcup_{m\geq 0} F^\times R^\times \bmx \varpi^m&\\&1 \emx R^\times,
$$
and
$$
g = \bmx a&b\\c&d \emx \in F^\times R^\times \bmx \varpi^m&\\&1 \emx R^\times,
$$
if and only if
$$
m =   v(\det g) - 2 \min \{ v(a), v(b), v(c), v(d) \} .
$$
Thus we see that
$$
\alpha = a + b\tau \in F^\times R^\times \bmx \varpi^m&\\&1 \emx R^\times,
$$
if and only if
$$
m =  v(N(\alpha)) - 2 \min \{ v(a), v(\varpi^{-n} b) \}.
$$
\end{proof}

For $f_v\in \H(G(F_v), Z_vR_v^\times)$ we have
\begin{align*}
I(0, f_v) &= \int_{F_v^\times\bs E_v^\times} f_v(a) \Omega_v(a) \ d^\times a\\
& = \vol(F_v^\times \bs F_v^\times (\O_{F_v} + \varpi_v^{n(\Omega_v)} \O_{E_v})^\times) \I(f_v),
\end{align*}
where
\[
\I(f_v):=\sum_{\alpha\in F_v^\times(\O_{F_v} + \varpi_v^{n(\Omega_v)} \O_{E_v})^\times \bs
E_v^\times} f_v(\alpha) \Omega_v(\alpha).
\]
For finite $v$ away from $\N$,  $\begin{pmatrix} 0 & \epsilon \\ 1 & 0
\end{pmatrix}\in R_v^\times$, and hence
\begin{align*}
I(\infty, f_v)&=\int_{F_v^\times \bs E_v^\times} f_v\left( a\begin{pmatrix} 0& \epsilon_v \\ 1&0
\end{pmatrix}\right) \Omega_v(a)d^\times a
\\
&=\int_{F_v^\times \bs E_v^\times} f_v\left( a\right) \Omega_v(a)d^\times a
\\
&=I(0, f_v).
\end{align*}
In the following three lemmas, we compute $\I(f_v)$ for $v$ inert, ramified and split in $E$.
\begin{lemma} \label{lemIrUn}
Let $v$ be a finite prime not dividing $\N$, which  is
inert in $E$. For $f_v\in\H(G(F_v), Z_vR_v^\times)$,
\begin{align*}
\I( f_v) =
\begin{cases} f_v(1) & \text{ when } n(\Omega_v)=0
\\
f_v(1) - f_v(1 + \tau_v\varpi_v^{n(\Omega_v)-1})
&
\text{ when } n(\Omega_v) > 0.
\end{cases}
\end{align*}
\end{lemma}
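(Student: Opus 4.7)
The plan is to parametrize the coset sum via the double coset invariant from Lemma \ref{lemdoublecoset} and then collapse it using character orthogonality on the finite abelian group $\O_{E_v}^\times/K$, where $n := n(\Omega_v)$ and $K := (\O_{F_v}+\varpi_v^n\O_{E_v})^\times$. Since $v$ is inert and unramified in $E$, $\varpi_v$ remains a uniformizer of $E_v$, hence $E_v^\times = F_v^\times \cdot \O_{E_v}^\times$ with $F_v^\times \cap \O_{E_v}^\times = \O_{F_v}^\times \subset K$; this yields $F_v^\times K \bs E_v^\times \cong \O_{E_v}^\times/K$. When $n = 0$ we have $K = \O_{E_v}^\times$ and the sum degenerates to $f_v(1)\Omega_v(1) = f_v(1)$, handling the first case.

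Assume now $n > 0$. The first step is to introduce the descending filtration $K_j := (\O_{F_v}+\varpi_v^j\O_{E_v})^\times$ for $0 \leq j \leq n$, so $K_0 = \O_{E_v}^\times$ and $K_n = K$. Using the $\O_{F_v}$-basis $\{1, \tau_v\}$ of $\O_{E_v}$, one checks that for $\alpha = a + b\tau_v \in \O_{E_v}^\times$ one has $\alpha \in K_j$ iff $v(b) \geq j$. Plugging $v(N(\alpha)) = 0$ into Lemma \ref{lemdoublecoset}, the double coset invariant becomes $-2\min\{v(a), v(b)-n\}$, which evaluates to $2(n-j)$ on the stratum $K_j \setminus K_{j+1}$ for $0 \leq j \leq n-1$ and to $0$ on $K_n$. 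Therefore $f_v$ is constant on each layer $B_j \setminus B_{j+1}$ of $\O_{E_v}^\times/K$, where $B_j := K_j/K$, with common value $f_v(\gamma_{2(n-j)})$; natural choices are $\gamma_0 = 1$ and $\gamma_{2(n-j)} = 1 + \varpi_v^j \tau_v$ for $0 \leq j \leq n-1$, so in particular $\gamma_2 = 1 + \varpi_v^{n-1}\tau_v$.

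The second step is orthogonality. Because the conductor of $\Omega_v$ equals $\varpi_v^n$, the restriction $\Omega_v|_{K_j}$ is trivial iff $j = n$, so on the finite abelian group $B_j = K_j/K$,
\[
\Sigma_j := \sum_{[\alpha]\in B_j}\Omega_v(\alpha) = \begin{cases} 1, & j = n, \\ 0, & 0 \leq j < n. \end{cases}
\]
Expanding $\I(f_v) = \sum_{j=0}^{n-1} f_v(\gamma_{2(n-j)})(\Sigma_j - \Sigma_{j+1}) + f_v(1)\,\Sigma_n$ and substituting, every contribution vanishes except the term at $j = n-1$ (yielding $-f_v(1+\varpi_v^{n-1}\tau_v)$) and the $j = n$ term (yielding $f_v(1)$), giving the stated formula.

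The only genuine obstacle is the bookkeeping of the first step: one must cleanly verify that $\alpha \in K_j$ is equivalent to $v(b) \geq j$ (using that $E_v/F_v$ is unramified so $\tau_v$ may be chosen with $\bar\tau_v$ generating the residue extension) and that this description aligns exactly with the double coset invariant of Lemma \ref{lemdoublecoset}. Once this identification is in place, the rest is a short telescoping application of character orthogonality.
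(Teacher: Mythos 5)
Your proof is correct and follows essentially the same route as the paper: both reduce the sum to cosets of $\O_{E_v}^\times$ modulo $K=(\O_{F_v}+\varpi_v^n\O_{E_v})^\times$, use Lemma~\ref{lemdoublecoset} to see that $f_v$ depends only on the valuation of the $\tau_v$-coordinate, and kill all but the top two layers by character orthogonality. The only difference is bookkeeping: the paper writes down two explicit families of coset representatives and evaluates each character sum directly, whereas you organize the quotient into the filtration $K_j/K$ and telescope, which is a cleaner way to display the same cancellation.
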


\begin{proof}
Recall that $n=n(\Omega_v)$.
We note that a set of representatives for $F^\times (\O_F + \varpi^n \O_E)^\times \bs E^\times$ is
given by
$$
\left\{ 1 + b\tau : b\in \O_F/\p_F^n\right\} \cup \left\{ a + \tau : a\in \p_F/\p_F^n\right\}.
$$
Applying Lemma \ref{lemdoublecoset} we see that
$$
\I(f)=\sum_{k=0}^{n} f(1 + \tau \varpi^k) \sum_{b \in (\p_F^k/\p_F^n)^\times} \Omega(1 + b\tau) + f(\tau)
\sum_{a \in \p_F/\p_F^n} \Omega(a + \tau).
$$
The lemma follows by the observation that
$$
\sum_{b \in (\p_F^k/\p_F^n)^\times} \Omega(1 + b\tau) = \left\{
  \begin{array}{ll}
    0, & \hbox{if $0 \leq k \leq n - 1$;} \\
    -1, & \hbox{if $k = n - 1$;} \\
    1, & \hbox{if $k = n$,}
  \end{array}
\right.
$$
and that
$$
\sum_{a \in \p_F/\p_F^n} \Omega(a + \tau) = 0.
$$
\end{proof}

\begin{lemma} \label{lemIrRa}
Let $v$ be a finite prime which is ramified
in $E$. For $f_v\in\H(G(F_v), Z_vR_v^\times)$,
\begin{align*}
\I(f_v) =
\begin{cases}
f_v(1) + \Omega_v(\tau_v) f_v(\tau_v) & \text{ when } n(\Omega_v)=0
\\
f_v(1) - f_v(1 + \tau_v \varpi_v^{n(\Omega_v)-1}) &
\text{ when } n(\Omega_v) > 0.
\end{cases}
\end{align*}
\end{lemma}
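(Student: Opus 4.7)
The plan is to follow the same strategy as the proof of Lemma \ref{lemIrUn} (the inert case): find an explicit set of representatives for $F_v^\times(\O_{F_v}+\varpi_v^{n}\O_{E_v})^\times\bs E_v^\times$, classify each representative by its double coset in $F_v^\times R_v^\times\bs D_v^\times/R_v^\times$ using Lemma \ref{lemdoublecoset}, and then evaluate a handful of character sums in $\Omega_v$. The key structural difference from the inert case is that in the ramified case $v_E(F_v^\times)=2\Z$ while $v_E(E_v^\times)=\Z$, so $E_v^\times/F_v^\times \O_n^\times$ has a $\Z/2\Z$ grading by parity of $v_E$.

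For $n(\Omega_v)=0$ we have $\O_n^\times=U_{E_v}$, so $F_v^\times U_{E_v}\bs E_v^\times$ has exactly the two representatives $\{1,\tau_v\}$, giving $\I(f_v)=f_v(1)+\Omega_v(\tau_v)f_v(\tau_v)$ immediately.

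For $n(\Omega_v)=n\geq 1$ I would show that a complete set of representatives is
\[
\{1+c\tau_v : c\in\O_{F_v}/\varpi_v^{n}\O_{F_v}\}\ \cup\ \{\tau_v(1+c\tau_v) : c\in\O_{F_v}/\varpi_v^{n}\O_{F_v}\},
\]
by writing $\O_n^\times=U_{F_v}\cdot(1+\varpi_v^{n}\tau_v\O_{F_v})$ and tracking the $\Z/2\Z$-grading above. Next, I would apply Lemma \ref{lemdoublecoset} to each representative: for $\alpha=1+c\tau_v$ with $v_F(c)=k\leq n$ the double coset invariant is $2(n-k)$ (so $c=0$ gives the identity double coset, and smaller $k$ gives larger invariants), while for every $\alpha=\tau_v(1+c\tau_v)$ the invariant is the odd integer $2n+1$, since $v_F(N(\alpha))=1$ and the normalizing minimum is forced to equal $-n$. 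Grouping terms by double coset, the sum becomes
\[
\I(f_v)=f_v(1)+\sum_{k=0}^{n-1}f_v(1+\varpi_v^{k}\tau_v)\sum_{u\in U_{F_v}/U_{F_v}^{n-k}}\Omega_v(1+u\varpi_v^{k}\tau_v)+f_v(\tau_v)\sum_{c\in\O_{F_v}/\varpi_v^{n}\O_{F_v}}\Omega_v(\tau_v(1+c\tau_v)).
\]

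The remaining task, and the main technical point, is the evaluation of the character sums. By exactly the argument used in Lemma \ref{lemIrUn}, the inner sum $\sum_{u}\Omega_v(1+u\varpi_v^{k}\tau_v)$ vanishes for $0\leq k\leq n-2$ and equals $-1$ for $k=n-1$, since the definition of $n(\Omega_v)$ means that $\Omega_v$ is trivial on $1+\varpi_v^{n}\tau_v\O_{F_v}$ but nontrivial on $1+\varpi_v^{n-1}\tau_v\O_{F_v}$, yielding the usual orthogonality cancellation at each filtration step. The delicate new piece is the final sum over the $\tau_v$-type representatives: I would factor $\Omega_v(\tau_v)$ out and then show that $\sum_{c\in\O_{F_v}/\varpi_v^{n}\O_{F_v}}\Omega_v(1+c\tau_v)=0$ by the same orthogonality argument, partitioning the sum according to $v_F(c)$; the contribution at each valuation level is a nontrivial character sum (or telescopes), so the total vanishes. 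Combining everything gives $\I(f_v)=f_v(1)-f_v(1+\varpi_v^{n-1}\tau_v)$. The main obstacle I anticipate is justifying that the multiplicative ``non-subgroup'' issue, caused by $\tau_v^{2}$ not being a unit in the ramified setting, does not break the inductive character-sum argument used in Lemma \ref{lemIrUn}; this should be handled by working modulo $1+\varpi_v^{n}\tau_v\O_{F_v}\subseteq\ker\Omega_v$ at each step, since higher-order cross terms are absorbed into this kernel.
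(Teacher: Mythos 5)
Your proposal is correct and follows essentially the same route as the paper: the paper writes the odd-valuation representatives as $a\varpi_v+\tau_v$ with $a\in\O_{F_v}/\p_{F_v}^n$, which coincides with your $\tau_v(1+c\tau_v)$ up to a unit (since $\tau_v^2\in\varpi_v U_{F_v}$), and then applies Lemma \ref{lemdoublecoset} and the same two character-sum evaluations. The ``non-subgroup'' obstacle you flag at the end is in fact harmless: the cross term $b_1b_2\tau_v^2$ lies in $\O_{F_v}\subset F_v^\times$, so modulo $F_v^\times$ the map $b\mapsto 1+b\tau_v$ is a genuine homomorphism at every filtration step and the orthogonality argument applies verbatim.
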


\begin{proof}
We note that a set of representatives for $ F^\times (\O_F + \varpi^n \O_E)^\times\bs E^\times$ is
given by
$$
\left\{ 1 + b\tau : b\in \O_F/\p_F^n\right\} \cup \left\{ a\varpi + \tau : a\in
\O_F/\p_F^n\right\}.
$$
Applying Lemma \ref{lemdoublecoset} we see that
$$
\I(f)=\sum_{k=0}^{n} f(1 + \tau \varpi^k) \sum_{b \in (\p_F^k/\p_F^n)^\times} \Omega(1 + b\tau) +
f(\varpi + \tau) \sum_{a \in \O_F/\p_F^n} \Omega(a \varpi + \tau).
$$
The lemma follows by the observation that
$$
\sum_{b \in (\p_F^k/\p_F^n)^\times} \Omega(1 + b\tau) = \left\{
  \begin{array}{ll}
    0, & \hbox{if $0 \leq k \leq n - 1$;} \\
    -1, & \hbox{if $k = n - 1$;} \\
    1, & \hbox{if $k = n$.}
  \end{array}
\right.
$$
And that
$$
\sum_{a \in \O_F/\p_F^n} \Omega(a \varpi + \tau) = \left\{
  \begin{array}{ll}
    \Omega(\tau), & \hbox{if $n = 0$;} \\
    0, & \hbox{if $n > 0$.}
  \end{array}
\right.
$$
\end{proof}

\begin{lemma}\label{lemIrSp}
Let $v$ be a finite prime which is split in $E$.
Let $f_v\in\H(G(F_v), R_v^\times)$. When $n(\Omega_v)=0$ we have
$$
\I( f_v) =  \sum_{\alpha\in E_v^\times/F_v^\times \O_{E_v}^\times} \Omega_v(\alpha) f_v(\alpha),
$$
and when $n(\Omega_v) > 0$ we have
$$
\I( f_v) =  f_v(1) - f_v(1 + \tau_v\varpi_v^{n(\Omega_v)-1}) .
$$
\end{lemma}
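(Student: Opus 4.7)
The plan is to follow the template of the proofs of Lemmas \ref{lemIrUn} and \ref{lemIrRa}, adapting it to the split setting, where the extra feature is that $E_v^\times / F_v^\times \O_{E_v}^\times$ is infinite (isomorphic to $\Z$) rather than finite. The two key tools remain the double-coset parametrization from Lemma \ref{lemdoublecoset} and character orthogonality applied to the restricted characters coming from $\Omega_v$.

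For the case $n(\Omega_v)=0$: the group $(\O_{F_v}+\O_{E_v})^\times$ is simply $\O_{E_v}^\times$, so the definition of $\I(f_v)$ gives
\[
\I(f_v)=\sum_{\alpha\in F_v^\times \O_{E_v}^\times\backslash E_v^\times} f_v(\alpha)\Omega_v(\alpha),
\]
which coincides with the claim after using that $F_v^\times \O_{E_v}^\times$ is a normal subgroup of the abelian group $E_v^\times$, so left and right cosets agree.

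For the case $n(\Omega_v)>0$: first, identify $F_v^\times(\O_{F_v}+\varpi_v^n\O_{E_v})^\times\backslash E_v^\times \cong F_v^\times/U_{F_v}^n$ via $(a,b)\mapsto b/a$, and choose representatives $\alpha_{k,u}$ indexed by $(k,u)\in\Z\times(\O_{F_v}^\times/U_{F_v}^n)$, where $\alpha_{0,u}=1+(u-1)\tau_v$ and $\alpha_{k,u}$ for $k\neq 0$ is a lift of $\varpi_v^k u$. Split the sum in $\I(f_v)$ along this decomposition. For the $k\neq 0$ contributions, the point is that $\Omega_v$, viewed via the isomorphism $E_v^\times/F_v^\times\cong F_v^\times$, corresponds to a character $\chi$ of $F_v^\times$ with conductor exactly $\varpi_v^n$; in particular, since $n\geq 1$, $\chi$ is nontrivial on $\O_{F_v}^\times$, and hence the inner sum $\sum_{u\in\O_{F_v}^\times/U_{F_v}^n}\chi(u)=0$. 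Thus all $k\neq 0$ terms cancel.

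The remaining $k=0$ contribution is handled exactly as in Lemma \ref{lemIrUn}. Applying Lemma \ref{lemdoublecoset} to $\alpha_{0,u}=1+(u-1)\tau_v$ shows that this representative lies in the same $Z_v R_v^\times$-double coset as $1+\tau_v\varpi_v^{\min(v(u-1),n)}$, so the sum reorganizes as
\[
\sum_{k=0}^{n} f_v(1+\tau_v\varpi_v^k) \sum_{u\in (U_{F_v}^k/U_{F_v}^n)\setminus (U_{F_v}^{k+1}/U_{F_v}^n)} \chi(u).
\]
The same orthogonality computation as before yields inner sums equal to $0$ for $k\leq n-2$, $-1$ for $k=n-1$, and $1$ for $k=n$, giving the stated answer $f_v(1)-f_v(1+\tau_v\varpi_v^{n-1})$ (using that $1+\tau_v\varpi_v^n$ lies in the trivial double coset).

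The main obstacle is the $k\neq 0$ contribution, which has no analogue in the inert or ramified cases (where $E_v^\times/F_v^\times\O_{E_v}^\times$ is finite of order at most $2$); its vanishing depends crucially on the ramification assumption $n(\Omega_v)>0$, which forces the induced character $\chi$ to be nontrivial on $\O_{F_v}^\times$. Once that is verified, the rest of the argument is a direct transcription of the inert case.
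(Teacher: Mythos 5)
Your proof is correct and follows essentially the same route as the paper's: identify $F_v^\times(\O_{F_v}+\varpi_v^n\O_{E_v})^\times\bs E_v^\times$ with $U_{F_v}^n\bs F_v^\times$ via $(a,b)\mapsto a/b$, discard the contributions at valuation $k\neq0$ by orthogonality of the induced character $\chi$ on $U_{F_v}$, and run the Lemma \ref{lemIrUn}-style character-sum computation on the $k=0$ piece. In fact you are slightly more explicit than the paper, which simply asserts that ``only $\alpha\in U_F$ contribute''; to make the factoring-out of $\sum_u\chi(u)$ fully airtight you should also record the (easy) consequence of Lemma \ref{lemdoublecoset} that for $k\neq 0$ the representative $(\varpi_v^k u,1)$ lies in the double coset indexed by $m=|k|+2n(\Omega_v)$ independently of $u\in U_{F_v}$, so that $f_v$ is indeed constant on the inner sum.
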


\begin{proof}
In this case we take $R = M(2, \O_F)$ and embed $E$  as
$$
(a, b) \mapsto \bmx a&\varpi^{-n}(a-b)\\0&b \emx.
$$
By Lemma \ref{lemdoublecoset} we have
$$
(a, b) \in F^\times R^\times \bmx \varpi^m&0\\0&1 \emx R^\times
$$
if and only if $m = v(ab) - 2 \min\{v(a), v(\varpi^{-n}(a-b))\}$. We recall
\begin{align*}
\I(f) &= \sum_{\alpha\in F^\times(\O_{F} + \varpi^{n(\Omega)} \O_{E})^\times \bs
E^\times} f(\alpha) \Omega(\alpha)
\\
&= \sum_{\alpha \in U_F^n \bs F^\times} \Omega(\alpha, 1) f(\alpha, 1).
\end{align*}
Thus we see that if $n = 0$ then
$$
\I(f) =  f(1) + \sum_{m=1}^\infty (\Omega(\varpi^m,
1) + \Omega(\varpi^{-m}, 1)) f(\varpi^m,1) .
$$
On the other hand if $n > 0$, then only $\alpha \in U_F$ contribute to the sum, thus
\begin{align*}
\I(f) &=  \sum_{a\in U_F^n \bs U_F}
\Omega(a,1) f(a, 1)\\
&=  f(1,1) - f(1+\varpi^{n-1}, 1).
\end{align*}
\end{proof}

We now compute the irregular orbital integrals at the archimedean places.
\begin{lemma}
Let $v \in \Sigma_\infty$. Then
$
I\left(0,f_v \right)=  \vol(F_v^\times\backslash E_v^\times).
$
\label{lemI0}
\end{lemma}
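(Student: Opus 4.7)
The plan is to unwind the definitions and exploit the fact that the test vector $w_v$ used to define $f_v$ is an $\Omega_v$-eigenvector for the action of $E_v^\times$. This reduces the integral to an integral of the constant function $1$ over $F_v^\times\bs E_v^\times$, which gives the stated volume.

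More precisely, I would first rewrite the integral as
\[
I(0,f_v)=\int_{F_v^\times\bs E_v^\times} f_v(t)\,\Omega_v(t)\,d^\times t,
\]
which is the archimedean analogue of the factor appearing in Lemma \ref{lemN0}. Next I would substitute the definition of $f_v$ from Section \ref{testfunction}, namely $f_v(g)=\overline{\langle\pi'_{2k_v}(g)w_v,w_v\rangle}$, where $w_v\in\pi'_{2k_v}(\Omega_v)$ is the unit vector fixed there. The defining property $\pi'_{2k_v}(t)w_v=\Omega_v(t)w_v$ for $t\in E_v^\times$, together with $\langle w_v,w_v\rangle=1$, gives
\[
f_v(t)=\overline{\Omega_v(t)\langle w_v,w_v\rangle}=\overline{\Omega_v(t)}.
\]

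Since $\Omega_v$ is unitary, $f_v(t)\Omega_v(t)=|\Omega_v(t)|^2=1$ on $E_v^\times$, so the integrand is identically $1$ and the integral equals $\vol(F_v^\times\bs E_v^\times)$, as claimed. There is no real obstacle here; the only thing to check is that the $\Omega_v$-eigenvector character on $E_v^\times$ matches the character appearing in $I(0,f_v)$ with the correct sign, which it does because $w_v$ was chosen to lie in $\pi'_{2k_v}(\Omega_v)$ (not its contragredient) and $f_v$ is defined using the complex conjugate of the matrix coefficient.
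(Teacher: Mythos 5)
Your proof is correct and follows essentially the same route as the paper's, which simply writes $I(0,f_v)=\int_{F_v^\times\bs E_v^\times} f_v(\diag(a,\bar a))\,\Omega_v(a)\,d^\times a=\vol(F_v^\times\bs E_v^\times)$; you have just made explicit the intermediate step that $f_v(t)=\overline{\Omega_v(t)}$ (since $w_v\in\pi'_{2k_v}(\Omega_v)$ is a unit vector and $f_v$ is the conjugate matrix coefficient) so that $f_v(t)\Omega_v(t)\equiv 1$ on $E_v^\times$ by unitarity.
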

\begin{proof}
We have
\begin{align*}
I\left( 0,f_v \right) &=  \int_{F_v^\times\backslash E_v^\times} f_v\left( \begin{pmatrix} a&0 \\
0& \overline a \end{pmatrix} \right)\Omega_v (a)d^\times a\\
&=  \vol(F_v^\times\backslash E_v^\times).
\end{align*}
\end{proof}

\begin{lemma}\label{lemarchfn}
Let $v\in\Sigma_\infty$. When $\alpha \neq 0$ we have
\begin{align*}
&f_v \bmx \alpha&-\beta\\\bar{\beta}&\bar{\alpha} \emx
\\
=
 &\frac{1}{(\alpha\bar{\alpha} +
\beta\bar{\beta})^{k_v-1}} \sum_{i=0}^{k_v - |m_v| - 1} (-1)^i \binom{k_v + m_v- 1}{i} \binom{k_v
- m_v - 1}{i} (\beta\bar{\beta})^{i} (\alpha\overline\alpha)^{k_v - 1 - i}
\left(\frac{\alpha}{\bar{\alpha}}\right)^{-m_v},
\end{align*}
 when $m_v=0$ we have
\begin{align*}
f_v \bmx 0&-\beta\\\bar{\beta}&0 \emx = (-1)^{k_v-1}
\end{align*}
and when  $m_v\neq 0$ we have
\[
f_v \bmx 0&-\beta\\\bar{\beta}&0 \emx =
 0.
\]
\end{lemma}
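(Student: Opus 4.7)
My plan is to compute $\langle \pi'_{2k_v}(g) w_v, w_v\rangle$ directly in the monomial basis of $\Sym^{2k_v-2} V \otimes \det^{-k_v+1}$ and then take its complex conjugate to obtain $f_v(g)$. First I would fix the isomorphism $D_v^\times(\overline{F_v}) \cong \GL(2,\C)$ so that $E_v^\times$ lands in the diagonal torus as $\alpha \mapsto \diag(\alpha, \bar\alpha)$, compatible with the presentation given in the paper, and let $e_1, e_2$ be the standard basis of $V = \C^2$. The diagonal torus acts on the monomial $e_1^{k_v-1+m_v} e_2^{k_v-1-m_v}$ by $\alpha^{k_v-1+m_v}\bar\alpha^{k_v-1-m_v}$, and the $\det^{-k_v+1}$ twist collapses this to $(\alpha/\bar\alpha)^{m_v} = \Omega_v(\alpha)$. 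Hence $w_v$ is a unit-length scalar multiple of this monomial, and the assumption $|m_v| < k_v$ from Section \ref{testfunction} is precisely what forces both exponents to be non-negative.

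Second, I would equip $\pi'_{2k_v}$ with any $\SU(2)$-invariant Hermitian inner product. Under such an inner product the monomials $e_1^i e_2^{2k_v-2-i}$ are pairwise orthogonal, so $\langle \pi'_{2k_v}(g) w_v, w_v\rangle$ equals the $e_1^{k_v-1+m_v} e_2^{k_v-1-m_v}$-coefficient of $\pi'_{2k_v}(g)$ applied to this monomial, with the normalization constant of $w_v$ cancelling out. For $g = \bmx \alpha&-\beta\\\bar\beta&\bar\alpha\emx$ with $\alpha \neq 0$, this reduces to picking out the coefficient of $e_1^{k_v-1+m_v} e_2^{k_v-1-m_v}$ in
\[
(\alpha\bar\alpha+\beta\bar\beta)^{-k_v+1}(\alpha e_1 + \bar\beta e_2)^{k_v-1+m_v}(-\beta e_1 + \bar\alpha e_2)^{k_v-1-m_v}.
\]
Expanding both factors by the binomial theorem and matching the $e_2$-degree to $k_v-1-m_v$ produces the desired sum, with the index $i$ bounded automatically by $0 \le i \le k_v-1-|m_v|$: the binomial $\binom{k_v+m_v-1}{i}$ vanishes for $i > k_v+m_v-1$ while $\binom{k_v-m_v-1}{i}$ vanishes for $i > k_v-m_v-1$, so the smaller of the two upper bounds survives regardless of the sign of $m_v$.

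Third, I would take the complex conjugate required by $f_v(g) = \overline{\langle \pi'_{2k_v}(g) w_v, w_v\rangle}$. Every factor in the sum other than $(\alpha/\bar\alpha)^{m_v}$ is manifestly real, so the only effect is to replace $(\alpha/\bar\alpha)^{m_v}$ by $(\alpha/\bar\alpha)^{-m_v}$, which matches the first formula exactly. For the degenerate case $\alpha = 0$, the matrix $g$ swaps the two coordinate lines up to scalars, so $\pi'_{2k_v}(g)\,e_1^{k_v-1+m_v} e_2^{k_v-1-m_v}$ is a scalar multiple of $e_1^{k_v-1-m_v} e_2^{k_v-1+m_v}$; by orthogonality this pairs with $w_v$ to zero unless $m_v = 0$, and when $m_v = 0$ a direct evaluation produces the scalar $\bar\beta^{k_v-1}(-\beta)^{k_v-1}(\beta\bar\beta)^{-k_v+1} = (-1)^{k_v-1}$, which is its own conjugate. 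The only real delicacy in this argument is the bookkeeping of signs, of the $\det^{-k_v+1}$ twist, and of the truncation of the sum at $k_v-|m_v|-1$; no substantive obstacle arises beyond that.
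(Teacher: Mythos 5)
Your argument is essentially the paper's own proof, presented in slightly different notation: the paper realizes $\pi'_{2k_v}$ on homogeneous polynomials in $x,y$ with $v_i = x^i y^{2k_v-2-i}$ and takes $w_v = v_{m_v+k_v-1}$, while you use the monomial basis $e_1^i e_2^{2k_v-2-i}$ and $w_v \propto e_1^{k_v-1+m_v}e_2^{k_v-1-m_v}$; both then expand $(\alpha e_1 + \bar\beta e_2)^{k_v-1+m_v}(-\beta e_1 + \bar\alpha e_2)^{k_v-1-m_v}$ by the binomial theorem, extract the diagonal coefficient (using orthogonality of monomials under the compact-invariant inner product), divide by $(\alpha\bar\alpha+\beta\bar\beta)^{k_v-1}$ for the determinant twist, and conjugate to flip the sign of $m_v$. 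The handling of $\alpha=0$ and the truncation of the sum at $k_v-|m_v|-1$ are also the same. No gap.
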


\begin{proof}
Let $\pi$ be the discrete series representation of $\PGL(2, \R)$ of weight $2k$. Recall that $\Omega$
is the character of $\C^\times$ given by
$$
\Omega : z\mapsto \left(\frac{z}{\bar{z}}\right)^m
$$
where $m$ is an integer with $|m| < k$. We have
$$
D^\times(\R) = \left\{ \begin{pmatrix}\alpha&-\beta\\\bar{\beta}&\bar{\alpha}\end{pmatrix} \in \GL(2,\C)
\right\}.
$$
Viewing $D^\times (\R) \subset \GL(2, \C)$ gives an irreducible 2-dimensional representation $V$ of $D^\times(\R)$.  We take $\pi'$ to be the representation of $G(\R)$ which corresponds to $\pi$ via the Jacquet-Langlands correspondence. Thus
$$
\pi' = \Sym^{2k-2}V \otimes \text{det}^{-(k-1)}.
$$
We realize $\pi'$ on the space of homogeneous polynomials in $x$ and $y$ of degree $2k-2$, with $g$ acting by
$$
\pi'(g) : x \mapsto \alpha x + \bar{\beta} y, y \mapsto -\beta x + \bar{\alpha} y.
$$
We set, for $0\leq i\leq 2k-2$,
$$
v_i = x^{i} y^{2k-2-i},
$$
so that
$$
\pi' \begin{pmatrix}\alpha&0\\0&\bar{\alpha}\end{pmatrix} v_i =
\left(\frac{\alpha}{\bar{\alpha}}\right)^{i - (k-1)} v_i.
$$
Hence we have
$$
\pi' \begin{pmatrix}\alpha&0\\0&\bar{\alpha}\end{pmatrix} v_{m + k - 1} = \Omega(\alpha) v_{m + k -
1}.
$$

We have, for $g\in G(\R)$,
$$
f(g) = \frac{\overline{\langle \pi'(g) v_{m + k - 1}, v_{m + k - 1} \rangle}}{\langle v_{m + k - 1}, v_{m + k
- 1} \rangle}.
$$
We compute that
\begin{align*}
& f \begin{pmatrix}\alpha&-\beta\\\bar{\beta}&\bar{\alpha}\end{pmatrix}
\\
 =
&\frac{1}{(\alpha\bar{\alpha} + \beta\bar{\beta})^{k-1}} \sum_{i=0}^{k-|m|-1} (-1)^i
\binom{k+m-1}{i} \binom{k-m-1}{i} (\beta\bar{\beta})^{i} \bar\alpha^{m + k - 1 - i} {\alpha}^{k - m
- 1 - i}
\end{align*}
and the lemma now follows.
\end{proof}

From the definition of $I(\infty, f_v)$ we have,
\begin{corollary} \label{corIrIn}
For $v\in\Sigma_\infty$,
$$
I(\infty, f_v) = \left\{
  \begin{array}{ll}
    \vol(F_v^\times\bs E_v^\times) (-1)^{k_v - 1}, & \hbox{if $m_v = 0$;} \\
    0, & \hbox{otherwise.}
  \end{array}
\right.
$$
\end{corollary}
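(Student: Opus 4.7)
The plan is to directly evaluate the defining integral
\[
I(\infty, f_v) = \int_{F_v^\times \bs E_v^\times} f_v\!\left( a \bmx 0 & \eps \\ 1 & 0 \emx \right) \Omega_v(a) \, d^\times a
\]
by substituting the archimedean formula from Lemma \ref{lemarchfn}. First I would read off from the identifications fixed at the start of Section \ref{geometric} that at an archimedean $v$ the element $\eps$ may be taken to be $-1$ (so that $D_v = \{ \bmx \alpha & -\beta \\ \bar\beta & \bar\alpha \emx\}$), and that $E_v^\times$ embeds as diagonal matrices $\bmx a & 0 \\ 0 & \bar a\emx$. A direct multiplication then yields
\[
a \bmx 0 & -1 \\ 1 & 0 \emx = \bmx 0 & -a \\ \bar a & 0 \emx,
\]
which is exactly the $\alpha = 0$, $\beta = a$ case of Lemma \ref{lemarchfn}.

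Next I would invoke the two cases of Lemma \ref{lemarchfn} for $\alpha = 0$. When $m_v \ne 0$ the lemma gives $f_v\bmx 0 & -a \\ \bar a & 0 \emx = 0$ pointwise, so $I(\infty, f_v) = 0$ without any further work. When $m_v = 0$ the lemma gives $f_v\bmx 0 & -a \\ \bar a & 0 \emx = (-1)^{k_v - 1}$, a constant independent of $a$, and the character $\Omega_v(a) = (a/\bar a)^{m_v}$ reduces to the trivial character. Therefore the integrand is the constant $(-1)^{k_v - 1}$, and integrating over $F_v^\times \bs E_v^\times$ produces
\[
I(\infty, f_v) = (-1)^{k_v - 1} \vol(F_v^\times \bs E_v^\times),
\]
matching the stated formula.

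There is no real obstacle here: the corollary is essentially a direct restatement of the $\alpha = 0$ specialization of Lemma \ref{lemarchfn}, once the matrix $a \bmx 0 & \eps \\ 1 & 0 \emx$ has been written out and one recognizes that for $m_v \ne 0$ the integrand vanishes identically (so no subtle cancellation via an orthogonality argument on the unitary group $F_v^\times \bs E_v^\times$ is needed). The only point that warrants care is the sign convention $\eps = -1$ at infinity, which is fixed by the embedding $D_v \hookrightarrow M_2(\C)$ chosen in Section \ref{geometric}.
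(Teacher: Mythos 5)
Your proof is correct and follows exactly the paper's route: the corollary is just the $\alpha = 0$ specialization of Lemma \ref{lemarchfn} inserted into the defining integral, noting $\eps_v = -1$ at archimedean places and that $\Omega_v$ becomes trivial when $m_v = 0$. The paper itself simply remarks "From the definition of $I(\infty, f_v)$ we have," leaving the one-line computation implicit, and your write-up supplies that computation without introducing anything new.
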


\subsection{Regular cosets}
\label{regularcosets}

Recall that
\[
I(\xi, f_v)= \int_{(F_v^\times \bs E_v^\times)^2}f_v\left( \bmx ab& a \bar b x\epsilon_v \\ \bar a
b \bar x & \overline{ab} \emx \right) \Omega_v(ab) d^\times b d^\times a.
\]
By a change of variables this equals
\[
\int_{F_v^\times \bs E_v^\times}\int_{E_v^1} f_v \left( \bmx a & 0 \\ 0& \bar a \emx
\gamma_{xz}\right) \Omega_v(a) d^\times z d^\times a
\]
where $\gamma_{xz}= \bmx 1& xz \epsilon_v \\ \overline{xz} & 1 \emx$.
For $x\in E^\times_v$, we let
\[
I(x, E_v^1):= \left\{ z \in E_v^1: \exists a \in E_v^\times \text{ such that } a \gamma_{xz} \in
R_v^\times\right\}.
\]
In this section we compute $I(\xi, f_v)$ under certain simplifying assumptions, and  provide a
bound on $|I(\xi, f_v)|$ in all cases.

\subsubsection{Exact calculations}
We begin by calculating $I(\xi, f_v)$ when $v$ divides $\N$.
\begin{lemma} \label{lemReN}
Let $v$ divide $\N$. Then
\[
I(\xi, f_v) =
\begin{cases}
0 & \text{if } v(\xi)\leq 0
\\
\vol (F_v^\times \backslash E_v^\times)^2 & \text{if } v(\xi)\geq 1 .
\end{cases}
\]
\end{lemma}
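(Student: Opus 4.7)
The plan is to exploit the fact that $D_v$ is a local division algebra at primes $v\mid \N$, so that membership in $Z_v R_v^\times$ is determined purely by the parity of the valuation of the reduced norm.

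The first step is to compute the reduced norm of the matrix $g = \bmx ab & a\bar b x \varpi_v \\ \bar a b\bar x & \overline{ab} \emx$. Using the local description of $D_v$ for $v\mid\N$ (with $\epsilon_v = \varpi_v$), one finds $N_D(g) = N(ab) - \varpi_v N(a\bar b x) = N(ab)(1-\xi)$, where $\xi = \varpi_v N(x)$. I would then invoke the standard fact that for a local division quaternion algebra $D_v$ with unique maximal order $R_v$, the quotient $D_v^\times/Z_v R_v^\times$ is cyclic of order two, with $g \in Z_v R_v^\times$ if and only if $v(N_D(g))$ is even. Since $v$ is inert in $E$, $v(N(ab)) = 2 v_E(ab)$ is automatically even, so the condition reduces to $v(1-\xi)$ being even.

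The second step is a simple parity check on $\xi$. Inertness gives $v(\xi) = 1 + 2 v_E(x)$, which is always odd. Hence the case $v(\xi) \leq 0$ forces $v(\xi) \leq -1$; then $v(1-\xi) = v(\xi)$ is odd and $f_v(g) = 0$ identically on the domain of integration, yielding $I(\xi, f_v) = 0$. In the case $v(\xi) \geq 1$ we have $v(1-\xi) = 0$, so $f_v(g) = 1$ for every choice of $(a, b)$.

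To finish the second case, I would verify that $\Omega_v$ is trivial on all of $E_v^\times$: since $v$ is inert and unramified in $E$ we have $\varpi_{E_v} = \varpi_v$ and hence $E_v^\times = F_v^\times \cdot U_{E_v}$, while by hypothesis $\Omega_v$ is trivial on both $F_v^\times$ and $U_{E_v}$. Therefore the integrand on $(F_v^\times \bs E_v^\times)^2$ is the constant $1$, and the double integral evaluates to $\vol(F_v^\times \bs E_v^\times)^2$, as claimed. The only mildly delicate point is the parity observation that $v(\xi)$ is always odd, which crucially uses $\epsilon_v = \varpi_v$ for $v \mid \N$; every other ingredient is a standard local fact about division quaternion algebras.
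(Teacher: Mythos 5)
Your proof is correct and takes essentially the same approach as the paper's (very terse) argument: both boil down to the observation that for $v\mid\N$ the division algebra $D_v$ has $Z_v R_v^\times = \{g : v(N_{D_v}(g))\ \text{even}\}$, together with the parity fact that $v(\xi) = 1 + 2v_E(x)$ is always odd because $\epsilon_v = \varpi_v$ and $v$ is inert. The paper phrases this as "$a\gamma_{xz}\in F^\times R^\times$ exactly when $v(x)\ge 0$" and leaves the rest implicit, whereas you spell out the reduced-norm computation $N_D(g) = N(ab)(1-\xi)$, the even-valuation criterion, and the triviality of $\Omega_v$ on $E_v^\times = F_v^\times U_{E_v}$; all of these checks are correct.
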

\begin{proof}
It is easy to see that
$
a \gamma_{xz}
\in F^\times R^\times$ exactly when $v(x)\geq 0$. Hence $v(\xi)$ must be greater than or equal
to one.
\end{proof}

We now establish a vanishing result for $I(\xi, f_v)$ for finite $v$ away from $\N$.
\begin{lemma}\label{lemRe}
Let $v$ be a finite prime not dividing $\N$. Suppose that $f_v$ is the characteristic function of
the double coset  $F_v^\times R_v^\times \gamma R_v^\times$ for $\gamma \in R_v$. If
$v(1-\xi)>v(\d_{E/F} \mathfrak{c}(\Omega)\det(\gamma))$, then $I(\xi, f_v)=0$.
\end{lemma}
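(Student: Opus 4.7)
The plan is to show that the integrand $f_v\left(\bmx a & 0 \\ 0 & \bar a\emx \gamma_{xz}\right)$ vanishes identically in $(a, z)$ under the hypothesis $v(1-\xi) > v(\d_{E/F}\,\c(\Omega)\,\det\gamma)$, so that $I(\xi, f_v) = 0$.

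Suppose the integrand is nonzero at some $(a, z)$. Then $g := \bmx a & 0 \\ 0 & \bar a\emx \gamma_{xz}$ lies in $F_v^\times R_v^\times \gamma R_v^\times$, so write $g = c\, r_1 \gamma\, r_2$ with $c \in F_v^\times$ and $r_1, r_2 \in R_v^\times$. Taking reduced norms and using $N_D(R_v^\times) \subseteq U_{F_v}$ (valid since $v \nmid \N$, so $R_v$ is a maximal order in the split algebra $D_v$), one obtains
\[
v(a\bar a) + v(1-\xi) \;=\; 2 v(c) + v(\det\gamma),
\]
since $z\bar z = 1$ yields $N_D(g) = a\bar a (1-\xi)$. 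Call this identity $(\star)$.

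Next I extract integrality from $c^{-1} g = r_1 \gamma\, r_2 \in R_v$. In the Hermitian presentation (with $\epsilon_v = 1$ since $v \nmid \N$) one has $g = \bmx a & axz \\ \bar a\, \overline{xz} & \bar a \emx$, and the explicit description of $R_v$ recorded at the start of Section \ref{geometric} forces
\[
c^{-1} a \;\in\; \frac{1}{(\bar\tau_v - \tau_v)\varpi_v^{n(\Omega_v)}}\bigl(\O_{F_v} + \varpi_v^{n(\Omega_v)}\O_{E_v}\bigr).
\]
When $v$ is non-split in $E$, this reduces to the single inequality $v_E(c^{-1}a) \geq -v_E(\mathfrak D_{E_v/F_v}) - n(\Omega_v)\,e(E_v/F_v)$. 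Multiplying by $2/e(E_v/F_v)$ and invoking the quadratic-extension identities $v(N_{E/F}(\alpha)) = 2 v_E(\alpha)/e(E_v/F_v)$ and $v(\d_{E/F}) = 2 v_E(\mathfrak D_{E_v/F_v})/e(E_v/F_v)$ (which hold because $\d_{E/F} = N_{E/F}(\mathfrak D_{E/F})$ and $[E:F]=2$) gives $2 v(c) \leq v(a\bar a) + v(\d_{E/F}) + 2 n(\Omega_v)$. Substituting into $(\star)$ yields $v(1-\xi) \leq v(\d_{E/F}\,\c(\Omega)\,\det\gamma)$, contradicting the hypothesis.

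For $v$ split in $E$ one has $E_v = F_v \oplus F_v$ and the containment on $c^{-1}a$ above is no longer a single valuation inequality; here the argument must be supplemented by the companion condition $c^{-1} a(1+xz) \in \O_{F_v} + \varpi_v^{n(\Omega_v)}\O_{E_v}$. Writing $a = (a_1, a_2)$, $x = (x_1, x_2)$, and using $a\bar a = a_1 a_2$ together with $1 - \xi = 1 - x_1 x_2$, a parallel accounting of valuations produces the bound $v(1-\xi) \leq 2 n(\Omega_v) + v(\det\gamma)$, which equals $v(\d_{E/F}\,\c(\Omega)\,\det\gamma)$ because $v(\d_{E/F}) = 0$ at a split prime. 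The main obstacle is this split case: one must combine the two conditions defining $R_v$ rather than deduce the bound from a single valuation inequality as in the non-split cases.
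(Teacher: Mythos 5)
Your argument is essentially the same as the paper's: take reduced norms to get the determinant identity $(\star)$, and use the constraint that the top-left entry $c^{-1}a$ of $c^{-1}g\in R_v$ must lie in $\frac{1}{(\bar\tau_v-\tau_v)\varpi_v^{n(\Omega_v)}}(\O_{F_v}+\varpi_v^{n(\Omega_v)}\O_{E_v})$, which is precisely what the paper does (phrased with $\lambda$ in place of $c^{-1}$). The non-split computation is correct.

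However, your claim that the split case is "the main obstacle" and requires invoking the companion condition $c^{-1}a(1+xz)\in\O_{F_v}+\varpi_v^{n(\Omega_v)}\O_{E_v}$ is mistaken. The same single containment $c^{-1}a\in\frac{1}{(\bar\tau_v-\tau_v)\varpi_v^{n(\Omega_v)}}\O_{E_v}$ suffices: for $v$ split, $\bar\tau_v-\tau_v$ is a unit, so with $c^{-1}a=(c^{-1}a_1,c^{-1}a_2)$ one gets $v(c^{-1}a_1)\geq -n(\Omega_v)$ and $v(c^{-1}a_2)\geq -n(\Omega_v)$, hence
\[
-2v(c)+v(a\bar a)=v(c^{-1}a_1)+v(c^{-1}a_2)\geq -2n(\Omega_v)=-v(\c(\Omega)),
\]
which substituted into $(\star)$ immediately gives $v(1-\xi)\leq v(\c(\Omega)\det\gamma)=v(\d_{E/F}\c(\Omega)\det\gamma)$ (since $v(\d_{E/F})=0$ at split $v$). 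There is no need for a case distinction at all: in every case the containment in the fractional $\O_{E_v}$-ideal yields $v(N_{E_v/F_v}(c^{-1}a))\geq -v(\d_{E/F})-v(\c(\Omega))$, and the conclusion follows uniformly. This is in fact exactly what the paper does in a single sentence.
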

\begin{proof}
Clearly $a \gamma_{xz} \in F_v^\times R_v^\times \gamma R_v^\times$ if and only if there exists a $\lambda \in
F_v^\times$ such that $ \lambda a  \gamma_{xz} \in R_v^\times \gamma R_v^\times$. If
this matrix lies in $R_v^\times \gamma R_v^\times$, then $v(\lambda^2 a\overline a(1-\xi))=v(\det
(\gamma))$ and $\lambda a \in \frac{1}{(\overline \tau -\tau)\varpi_v^n}\O_E$. Thus the orbital
integral is zero unless  $v(1-\xi)\leq v(\d_{E/F} \mathfrak{c}(\Omega) \det (\gamma))$.
\end{proof}

In the next two lemmas we compute $I(\xi, \1_v)$ when $\Omega_v$ is unramified.
\begin{lemma}
Let $v$ be a finite prime not dividing $\N$, not split in $E$ and such that $n(\Omega_v)=0$. If $v$
ramifies in $E$, assume that the characteristic of the residue field of $F_v$ is  odd.  Then
$I(\xi, \1_v)$ equals
\begin{multline*}
\vol(F_v^\times \backslash F_v^\times U_{E_v}) \vol(F_v^\times \bs E_v^\times)
\Omega_v(\varpi_{E_v}^{\frac{v_E(1-\xi)}{2}}) \times
\begin{cases}
0 & \text{if $v(1-\xi)> v(\d_{E/F}  )$}
\\
1  & \text{if  $v(1-\xi)\leq 0$}
\\
\frac{1}{2} & \text{if $v(1-\xi)=v(\d_{E/F})>0$}.
\end{cases}
\end{multline*}
\label{lemRe1}
\end{lemma}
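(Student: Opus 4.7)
The vanishing in the case $v(1-\xi)>v(\d_{E/F})$ follows directly from Lemma \ref{lemRe} applied to $\gamma=1$, since $n(\Omega_v)=0$.  For the remaining cases I translate the condition $\bmx a&ay\\ \bar a\bar y & \bar a\emx\in Z_v R_v^\times$, with $y=xz$, into valuation inequalities on $(a,y)$: setting $A=\lambda^{-1}a$ for an undetermined $\lambda\in F_v^\times$, the defining relations of $R_v^\times$ (with $n(\Omega_v)=0$ and $\epsilon_v=1$) amount to (a) $A\bar A(1-\xi)\in U_{F_v}$, (b) $v_E(A)\geq -v_E(\bar\tau_v-\tau_v)$, and (c) $v_E(A(1+y))\geq 0$.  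Condition (a) pins down $v_F(\lambda)$ up to units via $2v_F(\lambda)=\tfrac{2}{e}v_E(a)+v_F(1-\xi)$, where $e\in\{1,2\}$ is the ramification index of $E_v/F_v$; the existence of such $\lambda$ imposes the parity $v_E(a)\equiv v_F(1-\xi)\pmod 2$ in the ramified case, and is automatic in the inert case since $\xi\in N_{E/F}E_v^\times$ forces $v_F(1-\xi)$ to be even when negative.  With this choice $v_E(A)=-v_E(1-\xi)/2$, so (b) becomes $v(1-\xi)\leq v(\d_{E/F})$ and (c) reduces to $v_E(1+y)\geq v_E(1-\xi)/2$.

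The heart of the argument is the case analysis of (c).  When $v(1-\xi)\leq 0$, either $y\in U_{E_v}$ and $v_E(1+y)\geq 0=v_E(1-\xi)/2$ is trivial, or $v_E(y)=v_E(1-\xi)/2<0$ and $v_E(1+y)=v_E(y)$ with equality; in both sub-cases (c) holds for every $z\in E_v^1$.  In the boundary case $v(1-\xi)=v(\d_{E/F})=1$ (only possible when $v$ ramifies), $y$ is a unit and (c) becomes $y\equiv -1\pmod{\tau_v}$.  Because the residue characteristic is odd and $\bar\tau_v\equiv\tau_v\pmod{\tau_v}$, the relation $y\bar y=\xi\equiv 1\pmod{\varpi_v}$ forces $y\equiv\pm 1\pmod{\tau_v}$; since $z\mapsto z\bmod\tau_v$ defines a surjective homomorphism $E_v^1\to\{\pm 1\}$, exactly a subgroup of index $2$ in $E_v^1$ satisfies (c), producing the factor $\tfrac12$.

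It then remains to integrate $\Omega_v(a) d^\times a$ against the parity constraint and multiply by the volume of valid $z$'s.  In the inert case the parity is vacuous and $\Omega_v$ is trivial on $F_v^\times\bs E_v^\times=U_{F_v}\bs U_{E_v}$, so the $a$-integral equals $\vol(F_v^\times\bs F_v^\times U_{E_v})=\vol(F_v^\times\bs E_v^\times)$, and the character value $\Omega_v(\varpi_{E_v}^{v_E(1-\xi)/2})=1$ since $\varpi_{E_v}=\varpi_v\in F_v^\times$.  In the ramified case only the coset with $v_E(a)\equiv v_F(1-\xi)\pmod 2$ contributes, giving $\vol(F_v^\times\bs F_v^\times U_{E_v})\cdot\Omega_v(\varpi_{E_v}^{v_E(1-\xi)/2})$, using $v_F(1-\xi)=v_E(1-\xi)/2$ in this case.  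Multiplying by the $z$-volume---the full $\vol(E_v^1)=\vol(F_v^\times\bs E_v^\times)$ when $v(1-\xi)\leq 0$, or half of it in the boundary case---reproduces the three values in the statement.  The only substantive step is the boundary ramified case, where identifying $y\bmod\tau_v$ with an element of $\{\pm 1\}$ and invoking the index-$2$ subgroup of $E_v^1$ gives the $\tfrac12$; the rest is bookkeeping of measures (via the Hilbert $90$ identification $E_v^1\cong E_v^\times/F_v^\times$) and of the parity constraint on $\lambda$.
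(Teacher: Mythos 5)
Your proof is correct and follows essentially the same route as the paper: translate the $F_v^\times R_v^\times$-membership into the valuation conditions (a)--(c), observe that (b) forces the cutoff $v(1-\xi)\le v(\d_{E/F})$, show (c) is vacuous for $v(1-\xi)\le 0$, and handle the ramified boundary case by showing that exactly half of $E_v^1$ satisfies (c). The only (cosmetic) difference is in that boundary case: the paper normalizes $x\in U_{E_v}^1$, writes $z=\alpha/\bar\alpha$ via Hilbert 90, and computes $1+xz$ in explicit coordinates $\alpha=\alpha_1+\alpha_2\sqrt{\varpi_v}$ to find that (c) holds iff $v_E(\alpha)$ is odd, whereas you argue directly through the reduction homomorphism $E_v^1\to\{\pm 1\}$ (which is the same index-$2$ dichotomy); a minor wording fix is that the solution set for $z$ is in general a \emph{coset} of the index-$2$ kernel rather than a subgroup, but the volume count is of course the same.
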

\begin{proof}
We look at our orbital integral,
\[
I(\xi, \1)=\int_{F^\times\backslash E^\times} \int_{E^1} \1 \left( a \gamma_{xz} \right) \Omega(a) d^\times z
d^\times a.
\]
By the definition of $R^\times$, $a \gamma_{xz}$ lies in $F^\times R^\times$ if and only if there is a
$\lambda \in F^\times$ such that
\begin{enumerate}
\item $v(\lambda^2a\overline a)=-v(1-\xi)$
\item $\lambda a \in \frac{1}{(\overline \tau -\tau)} \O_E$
\item $\lambda a(1+zx) \in \O_E$.
\end{enumerate}
If $v(1-\xi)\leq v(\d_{E/F})$, then
\begin{align*}
I(\xi, \1) &=\int_{\{z \in E^1: 1+xz \in \varpi_E^{\frac{v_E(1-\xi)}{2}}\O_E\}} \int_{
F^\times\backslash F^\times \varpi_E^{\frac{-v_E(1-\xi)}{2}}U_E}  \Omega(a) d^\times a d^\times z
\\
&=\vol\left(z \in E^1: 1+xz \in \varpi_E^{\frac{v_E(1-\xi)}{2}}\O_E \right)\vol(F^\times \bs
F^\times U_E) \Omega(\varpi_{E}^{\frac{v_E(1-\xi)}{2}}).
\end{align*}
The lemma now easily follows if $v(1-\xi)\leq 0$.

It now remains to calculate $\vol( \{ z\in E^1 : v_E(1 + xz) \geq 1 \} )$ when $v$ ramifies in $E$
and $v(1-\xi)=v(\d_{E/F})$. In this case we can take $\tau=\sqrt \varpi$. Since the volume only
depends on $x\bar{x}$ we can assume that $x \in U^1_E$. Hence we can write $x = x_1 + x_2
\sqrt{\varpi}$ with $x_1 \in U_F^1$ and $x_2 \in \O_F$. We write $z = \alpha \bar{\alpha}^{-1}$
with $\alpha = \alpha_1 + \alpha_2 \sqrt{\varpi}$. Then
\begin{align*}
1 + xz &= \bar{\alpha}^{-1} \left( \bar{\alpha} + x \alpha\right)\\
&= \bar{\alpha}^{-1} \left( \alpha_1 - \alpha_2 \sqrt{\varpi} + (x_1 + x_2 \sqrt{\varpi})(\alpha_1
+ \alpha_2 \sqrt{\varpi})\right)\\
&= \bar{\alpha}^{-1} \left( \alpha_1 (1 + x_1) + \alpha_2 x_2 \varpi + (\alpha_1 x_2 + \alpha_2
(x_1 - 1)) \sqrt{\varpi} \right).
\end{align*}
Hence we have $v_E(1 + xz) \geq 1$ if and only if $v_E(\alpha)$ is odd, from which we deduce that
$$
\vol( \{ z\in E^1 : v_E(1 + xz) \geq 1 \} ) = \frac 12 \vol(E^1).
$$
\end{proof}

\begin{lemma}
Let $v$ be a finite prime  which is  split in $E$ and such that $n(\Omega_v)=0$. Then
\begin{align*}
I(\xi, \1_v) = \left(\vol\left(U_{F_v} \right)\right)^2 \times
\begin{cases}
0 & \text{ if } v(1-\xi)>0
\\
(v(\xi)+1)
& \text{ if } v(1-\xi) = 0
\\
\Omega_v(\xi, 1)\sum_{l=0}^{|v(\xi)|} \Omega_v(\varpi_v^{2l}, 1) & \text{ if } v(1-\xi)<0.
\end{cases}
\end{align*}
\label{lemRe2}
\end{lemma}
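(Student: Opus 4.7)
The plan is to explicitly parameterize the orbital integral at a split place and reduce it to counting lattice points.

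\textbf{Setup.} Since $v$ splits in $E$, identify $E_v = F_v \oplus F_v$ with $\overline{(a_1,a_2)} = (a_2,a_1)$. The map
$$\bmx \alpha & \beta\\ \bar\beta & \bar\alpha \emx \mapsto \bmx \alpha_1 & \beta_1\\ \beta_2 & \alpha_2 \emx$$
gives an isomorphism $D_v \cong M(2,F_v)$ under which the embedded $E_v$ becomes the diagonal torus, and (since $n(\Omega_v)=0$) $R_v = M(2,\O_{F_v})$. Writing $a=(u,1)$ as a representative of $F_v^\times\backslash E_v^\times$, $x=(x_1,x_2)$, and $z=(z_1,z_1^{-1})\in E_v^1$, one has $\xi = x_1 x_2$ and
$$a\gamma_{xz} = \bmx u & u x_1 z_1\\ x_2 z_1^{-1} & 1 \emx.$$
So $a\gamma_{xz}\in F_v^\times R_v^\times$ iff there exists $\lambda\in F_v^\times$ with all four entries of $\lambda\cdot a\gamma_{xz}$ in $\O_{F_v}$ and $v(\lambda^2 u(1-\xi))=0$. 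Under $u \in F_v^\times$, $\Omega_v(u,1)$ is an unramified character $\chi(u)$, with $\Omega_v(\xi,1)=\chi(\xi)$, and the measures on $F_v^\times\backslash E_v^\times$ and $E_v^1$ both pull back to $d^\times$ on $F_v^\times$ with $\vol(U_{F_v})$ on units.

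\textbf{Case $v(1-\xi)>0$.} Here $\xi\in U_{F_v}$. The product of the two off-diagonal entries of $\lambda\cdot a\gamma_{xz}$ equals $\lambda^2 u\xi$, whose valuation is $-v(1-\xi)+v(\xi)=-v(1-\xi)<0$, contradicting that both entries lie in $\O_{F_v}$. Hence $I(\xi,\1_v)=0$.

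\textbf{Case $v(1-\xi)=0$.} Set $m=v(u)$. Then $2v(\lambda)=-m$ forces $m$ even and $v(\lambda)=-m/2$; the diagonal conditions $v(\lambda)\geq 0$ and $v(\lambda u)\geq 0$ then force $m=0$, so $u\in U_{F_v}$ and $\lambda\in U_{F_v}$. The off-diagonal conditions become $-v(x_1)\leq v(z_1)\leq v(x_2)$, an interval of length $v(x_1)+v(x_2)+1 = v(\xi)+1$. The integrand $\Omega_v(u,1)=\chi(u)=1$ on $U_{F_v}$, and each valuation stratum contributes $\vol(U_{F_v})$ to the $z$-integral, yielding $\vol(U_{F_v})^2 (v(\xi)+1)$.

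\textbf{Case $v(1-\xi)<0$.} Now $v(\xi)=v(1-\xi)=-|v(\xi)|$. With $m=v(u)$ and $l=v(z_1)$, the determinant condition gives $v(\lambda)=(|v(\xi)|-m)/2$, which requires $m\equiv |v(\xi)|\pmod 2$ and (from $v(\lambda)\geq 0$, $v(\lambda u)\geq 0$) $m\in[-|v(\xi)|,|v(\xi)|]$. Using $v(x_1)+v(x_2)=v(\xi)=-|v(\xi)|$, both off-diagonal conditions collapse to the single equality $l = -(|v(\xi)|+m)/2 - v(x_1)$, so $l$ is uniquely determined by $m$. Writing $m = -|v(\xi)|+2k$ for $k=0,1,\ldots,|v(\xi)|$ and using $\Omega_v(\varpi_v^{-|v(\xi)|},1)=\Omega_v(\xi,1)$ (by unramifiedness), each valid $m$ contributes $\vol(U_{F_v})^2 \Omega_v(\varpi_v^m,1)$, giving
$$\vol(U_{F_v})^2 \Omega_v(\xi,1)\sum_{k=0}^{|v(\xi)|}\Omega_v(\varpi_v^{2k},1).$$

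\textbf{Main obstacle.} The bookkeeping in the third case is the crux: one must verify that the two off-diagonal constraints are not just compatible but force $l$ to a single value, so the ``$z$-integration'' collapses to a point-sum rather than an interval. Getting the measure normalization of $E_v^1$ correct relative to $F_v^\times$ under $z_1\mapsto(z_1,z_1^{-1})$ is the other easy-to-fumble ingredient.
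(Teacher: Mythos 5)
Your proof is correct and follows essentially the same route as the paper's: identify $D_v\cong M(2,F_v)$ at the split place, translate membership of $a\gamma_{xz}$ in $F_v^\times R_v^\times$ into valuation constraints on $\lambda$, $a$, $z_1$, and count the valuation strata. The only superficial differences are that you rule out the case $v(1-\xi)>0$ via the off-diagonal entries where the paper uses the diagonal ones, and your indexing variable (the valuation $m=v(u)$ of the torus element) corresponds to the paper's $l$ after a harmless change of variables $m = k+2l$; the lattice-point counting and the conclusion are identical.
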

\begin{proof}
Let $x=(x_1, x_2)$ and $k=v(1-\xi)$.
We look at our orbital integral,
\[
I(\xi, \1)=\int_{F^\times\backslash E^\times} \int_{E^1} \1 \left( \begin{pmatrix} a & a z x \\
\overline a \overline z \overline x & \overline a \end{pmatrix} \right) \Omega(a)
d^\times z d^\times a.
\]
A matrix in the integrand of this integral lies in $F^\times R^\times$ if and only if there is a
$\lambda \in F^\times$ such that
\begin{enumerate}
\item $v(\lambda^2a\overline a)=-v(1-\xi)$
\item $\lambda a \in \O_E$
\item $\lambda a(1+zx) \in \O_E$.
\end{enumerate}
It is easy to see from these conditions that if $v(1-\xi)\leq 0$,
\begin{align*}
I(\xi, \1)=\sum_{l=0}^{-k}\int_{ F^\times\backslash F^\times(\varpi^lU_F\times \varpi^{-k-l}U_F)}
\Omega(a) d^\times a \sum_{m=-v(x_1)-l}^{-k-l+v(x_2)} \int_{(\varpi^m, \varpi^{-m}) U_F}  d^\times
b.
\end{align*}
If $k=0$,
\[
I(\xi,\1)=(v(\xi)+1)\vol\left(U_F \right)^2 .
\]
If $k<0$, then $v(x_1x_2)=k$ and
\[
I(\xi, \1)=\sum_{l=0}^{-k} \Omega(\varpi^{k+2l},1) \vol\left(U_F \right)^2.
\]
\end{proof}

Finally, we compute the regular orbital integral for $v$ archimedean.
\begin{lemma}
For $v\in\Sigma_\infty$ and $\xi\in F_v$ with $\xi < 0$ we have
$$
I(\xi, f_v) = \frac{\vol(F_v^\times\bs E_v^\times)^2}{(1-\xi)^{k_v - 1}} \sum_{i=0}^{k_v - |m_v| -
1} \binom{k_v + m_v - 1}{i} \binom{k_v - m_v - 1}{i} (-\xi)^i.
$$
\label{lemRe3}
\end{lemma}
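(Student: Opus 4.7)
The plan is to start from the change-of-variables formula
\[
I(\xi, f_v) = \int_{F_v^\times\bs E_v^\times}\int_{E_v^1} f_v\!\left(\bmx a & 0\\ 0 & \bar a\emx\gamma_{xz}\right)\Omega_v(a)\,d^\times z\,d^\times a
\]
recorded at the opening of Section \ref{regularcosets}, insert the explicit matrix coefficient from Lemma \ref{lemarchfn}, and exploit the fact that both the $a$-variable and the $z$-variable drop out of the integrand so that the answer reduces to a product of volumes times a scalar polynomial in $-\xi$ and $(1-\xi)^{-1}$.

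Concretely, computing the matrix product gives
\[
\bmx a & 0\\ 0 & \bar a\emx\gamma_{xz} = \bmx a & -axz\\ \bar a\overline{xz} & \bar a\emx,
\]
which matches the form $\bmx\alpha & -\beta\\ \bar\beta & \bar\alpha\emx$ of Lemma \ref{lemarchfn} with $\alpha = a$ and $\beta = axz$. Using $z\bar z = 1$ one has $\beta\bar\beta = a\bar a\cdot x\bar x$, and since $\xi = -x\bar x$ at the archimedean place, $\alpha\bar\alpha + \beta\bar\beta = a\bar a(1-\xi)$. Substituting into Lemma \ref{lemarchfn}, the $(a\bar a)^{k_v-1}$ factors cancel between numerator and denominator, leaving behind only the character factor $(a/\bar a)^{-m_v}$ and a polynomial in $x\bar x = -\xi$ with coefficients given by the binomial products $\binom{k_v+m_v-1}{i}\binom{k_v-m_v-1}{i}$.

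The key observation is then that multiplying by $\Omega_v(a) = (a/\bar a)^{m_v}$ cancels the factor $(a/\bar a)^{-m_v}$ exactly, and $z$ never appears outside of $z\bar z = 1$. Thus the integrand is a constant in both variables. The outer integral over $(F_v^\times\bs E_v^\times)\times E_v^1$ therefore contributes a pure volume factor, which under the measure normalizations of Section \ref{notation} equals $\vol(F_v^\times\bs E_v^\times)^2$ via the natural isomorphism $F_v^\times\bs E_v^\times\cong E_v^1/\{\pm 1\}$.

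The argument is essentially routine once Lemma \ref{lemarchfn} is in hand; the only mild obstacle is keeping the signs straight when one collects the contributions of $(-1)^i$ from Lemma \ref{lemarchfn} together with $(\beta\bar\beta)^i = (a\bar a)^i(-\xi)^i$ so that the final polynomial has the claimed form in powers of $-\xi$.
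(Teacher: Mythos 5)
Your approach is essentially the paper's: the published proof invokes the $\Omega_v$-equivariance $f_v(t_1\gamma t_2)=\Omega_v^{-1}(t_1t_2)f_v(\gamma)$ to render the integrand constant, then reads off $f_v(\gamma_x)$ from Lemma~\ref{lemarchfn}. You reach the same reduction by substituting the explicit matrix product $\bmx a & -axz\\ \bar a\overline{xz} & \bar a\emx$ into Lemma~\ref{lemarchfn} and watching the $(a\bar a)^{k_v-1}$ and $(a/\bar a)^{\mp m_v}$ factors cancel and the $z$-dependence disappear through $z\bar z=1$; that is a correct and equivalent route, and the volume bookkeeping is fine.

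The one step you declined to carry out, collecting the signs, is however precisely where your conclusion goes wrong. With $\alpha=a$, $\beta=axz$ one has $\beta\bar\beta=a\bar a\,x\bar x$, and at an archimedean place $\eps_v=-1$ gives $x\bar x=-\xi$; so the $i$-th summand of Lemma~\ref{lemarchfn} produces $(-1)^i(\beta\bar\beta)^i(\alpha\bar\alpha)^{k_v-1-i}=(a\bar a)^{k_v-1}(-1)^i(-\xi)^i=(a\bar a)^{k_v-1}\xi^i$, not $(a\bar a)^{k_v-1}(-\xi)^i$. A direct check with $k_v=2$, $m_v=0$, $x\bar x=1$ (so $\xi=-1$) confirms this: $f_v\bmx 1&-x\\ \bar x&1\emx=(1-x\bar x)/(1+x\bar x)=0$, matching $\tfrac12(1+\xi)=0$ and contradicting the stated $\tfrac12(1-\xi)=1$. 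Carrying the computation through honestly yields
\begin{equation*}
I(\xi, f_v)=\frac{\vol(F_v^\times\bs E_v^\times)^2}{(1-\xi)^{k_v-1}}\sum_{i=0}^{k_v-|m_v|-1}\binom{k_v+m_v-1}{i}\binom{k_v-m_v-1}{i}\xi^i,
\end{equation*}
which differs from the lemma as printed by $\xi^i$ in place of $(-\xi)^i$. Lemma~\ref{lemarchfn} itself checks out independently, so this appears to be a sign slip in the statement of the present lemma (and correspondingly in the definition of $P_{k,m}$); in any case you cannot assert that the $(-1)^i$ and $(-\xi)^i$ "collect to the claimed form" without doing the multiplication, because they in fact collapse to $\xi^i$.
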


\begin{proof}
By definition of $f_v$ we have
$$
f_v \left( \bmx a&0\\0&\bar{a} \emx \gamma \bmx b&0\\0&\bar{b} \emx \right) = \Omega_v^{-1}(a b)
f_v(\gamma).
$$
Applying Lemma \ref{lemarchfn} gives the result.
\end{proof}

\subsubsection{Bounds on $I(\xi, \1_v)$} \label{bounds}

We now bound the regular orbital integral in the remaining cases. For a place $v$ of $F$, we fix $x
\in E_v$ such that $\xi=\eps_v x\bar x$.

\begin{lemma}\label{lemReBd}
Let $v$ be a finite prime.   Then
\begin{align*}
|I(\xi, \1_v)|
\leq \vol \left(F_v^\times \bs (1+\varpi_v^{n(\Omega_v)}\O_{E_v})^\times F_v^\times \right)\vol\left(I(x, E_v^1) \right).
\end{align*}
\end{lemma}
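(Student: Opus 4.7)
The plan is to obtain the bound in the most direct way: apply the triangle inequality, use that $|\Omega_v| \equiv 1$, and then for each fixed $z$ compute exactly which $a \in F_v^\times \bs E_v^\times$ actually lie in the support of the integrand. Concretely, the first step gives
\[
|I(\xi, \1_v)| \leq \int_{F_v^\times \bs E_v^\times} \int_{E_v^1} \1_v(a\gamma_{xz}) \, d^\times z \, d^\times a,
\]
and by the very definition of $I(x, E_v^1)$ the inner integrand vanishes unless $z \in I(x, E_v^1)$.

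The crux of the argument is therefore to evaluate, for a fixed $z \in I(x, E_v^1)$, the measure of the slice
\[
S_z := \{ a \in F_v^\times \bs E_v^\times : a \gamma_{xz} \in F_v^\times R_v^\times \}.
\]
Pick any $a_0 \in E_v^\times$ with $a_0 \gamma_{xz} \in R_v^\times$, which exists by the hypothesis $z \in I(x, E_v^1)$. Then $a \gamma_{xz} \in F_v^\times R_v^\times$ if and only if $a_0^{-1} a \in F_v^\times R_v^\times \cap E_v^\times$, so $S_z$ is the single coset of the group $H := F_v^\times R_v^\times \cap E_v^\times$ (modulo $F_v^\times$) represented by $a_0$. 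Its volume is therefore independent of $z$ and equals $\vol(F_v^\times \bs H)$.

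The remaining task, which is the main piece of algebra, is to identify $H$ as $F_v^\times (1 + \varpi_v^{n(\Omega_v)} \O_{E_v})^\times$. If $\alpha = \mu r$ with $\mu \in F_v^\times$, $r \in R_v^\times$, and $\alpha \in E_v^\times$, then $r = \mu^{-1}\alpha \in R_v^\times \cap E_v^\times = (\O_{F_v} + \varpi_v^{n(\Omega_v)} \O_{E_v})^\times$, using that $R_v \cap E_v = \O_{F_v} + \varpi_v^{n(\Omega_v)} \O_{E_v}$ by construction of $R_v$. The converse inclusion is obvious. Finally, writing any unit of $\O_{F_v} + \varpi_v^{n(\Omega_v)} \O_{E_v}$ as an element of $U_{F_v}$ times an element of $1 + \varpi_v^{n(\Omega_v)} \O_{E_v}$ and absorbing $U_{F_v}$ into $F_v^\times$ yields $H = F_v^\times (1+\varpi_v^{n(\Omega_v)}\O_{E_v})^\times$.

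Putting these ingredients together and integrating over $z \in I(x, E_v^1)$ gives the claimed bound. The only nontrivial point is the intersection computation $F_v^\times R_v^\times \cap E_v^\times$; everything else is formal, so I do not expect a serious obstacle beyond keeping the definition of $R_v$ at hand.
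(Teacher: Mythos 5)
Your proposal is correct and follows essentially the same route as the paper: restrict the $z$-integration to $I(x, E_v^1)$, bound $|\Omega_v|$ by $1$, change variables by $a_{xz}$ (which reduces the $a$-integral to $\vol(F_v^\times\backslash(F_v^\times R_v^\times\cap E_v^\times))$ using right $R_v^\times$-invariance of $\1_v$), and identify this intersection via $R_v\cap E_v=\O_{F_v}+\varpi_v^{n(\Omega_v)}\O_{E_v}$. Your write-up is more detailed than the paper's three-line proof, but the ideas coincide.
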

\begin{proof}
By definition,
\[
I(\xi, \1) = \int_{I(x, E^1)} \int_{F^\times \bs E^\times} \1(a \gamma_{xz})\Omega(a) d^\times a d^\times z.
\]
For each fixed $z \in I(x, E^1)$ we know that there exists an $a_{xz}\in E^\times$ such that
$a_{xz} \gamma_{xz} \in R^\times$. By  a change of variables with  $a_{xz}$,
\begin{align*}
|I(\xi, \1)| \leq \int_{I(x, E^1)} \int_{F^\times \bs E^\times} \1(a) d^\times a d^\times z.
\end{align*}
The lemma now follows by the fact that $R\cap E = \O_F+\varpi^n \O_{E}$.
\end{proof}

\begin{corollary} \label{corReBd}
Let $v$ be a finite prime not dividing $\N$ which is not split in $E$.   Then
\begin{align*}
&|I(\xi, \1_v)|
\\
\leq &\vol \left(F_v^\times \bs (1+\varpi_v^{n(\Omega_v)}\O_{E_v})^\times F_v^\times
\right)\vol\left(\left\{ z\in E_v^1: v_E(1+xz)\geq \frac{v_E(1-\xi)}{2}\right\} \right).
\end{align*}
\end{corollary}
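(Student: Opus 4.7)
The plan is to deduce the corollary from Lemma \ref{lemReBd} by establishing the set inclusion
\[
I(x, E_v^1) \subseteq \left\{ z \in E_v^1 : v_E(1+xz) \geq \tfrac{v_E(1-\xi)}{2} \right\},
\]
which gives the desired volume inequality for the second factor. The first factor is already provided directly by Lemma \ref{lemReBd}, so it suffices to analyze the condition defining $I(x, E_v^1)$.

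Fix $z \in I(x, E_v^1)$, so there exists $a = a_{xz} \in E_v^\times$ (viewed as $\bigl(\begin{smallmatrix} a & 0 \\ 0 & \bar a \end{smallmatrix}\bigr)$) with $a\gamma_{xz} \in R_v^\times$. Since $v \nmid \N$ we have $\eps_v = 1$, and an explicit multiplication gives
\[
a \gamma_{xz} = \bmx a & a x z \\ \bar a \bar x \bar z & \bar a \emx.
\]
Membership of this matrix in $R_v^\times$ forces two conditions. First, its determinant $a\bar a(1 - x\bar x z \bar z) = a\bar a(1 - \xi)$ must lie in $U_{F_v}$ (using $z \bar z = 1$). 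Second, the definition of $R_v$ requires $\alpha + \beta = a + axz = a(1 + xz)$ to lie in $\O_{F_v} + \varpi_v^{n(\Omega_v)} \O_{E_v} \subseteq \O_{E_v}$.

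From the determinant condition we obtain $v(a\bar a) = -v(1-\xi)$, and hence in terms of the valuation $v_E$ on $E_v$,
\[
v_E(a) + v_E(\bar a) = -v_E(1-\xi).
\]
This is where the hypothesis that $v$ is not split in $E$ enters: $E_v/F_v$ is then a field extension, and the Galois conjugation preserves the valuation, giving $v_E(\bar a) = v_E(a)$. Thus
\[
v_E(a) = -\tfrac{v_E(1 - \xi)}{2}.
\]
Combining with $v_E\bigl(a(1 + xz)\bigr) \geq 0$ coming from the second condition yields
\[
v_E(1 + xz) \geq -v_E(a) = \tfrac{v_E(1-\xi)}{2},
\]
which proves the claimed inclusion.

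There is no real obstacle here; the argument is a direct bookkeeping of the defining conditions of $R_v$ together with the crucial input that conjugation is an isometry for $v_E$ when $E_v/F_v$ is a field (so equally handles the inert and ramified cases, including subtleties of ramification indices since the inequality $v_E(1+xz) \geq v_E(1-\xi)/2$ is written entirely in $v_E$).
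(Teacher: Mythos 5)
Your proof is correct and takes essentially the same route as the paper's: you unwind the two defining conditions of $R_v^\times$ (determinant a unit, and $\alpha + \beta = a(1+xz) \in \O_{F_v} + \varpi_v^{n(\Omega_v)}\O_{E_v}$) to get $v_E(a) = -v_E(1-\xi)/2$ via Galois-invariance of $v_E$ (available precisely because $v$ is non-split) and then the stated inclusion, which combined with Lemma \ref{lemReBd} gives the bound. The paper phrases the membership condition as $a\gamma_{xz} \in F_v^\times R_v^\times$ with an auxiliary $\lambda \in F_v^\times$, but this is equivalent to your direct use of the definition of $I(x, E_v^1)$ after replacing $a$ by $\lambda a$.
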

\begin{proof}
If $a \gamma_{xz} \in F^\times R^\times$ then  there must be a
$\lambda \in F^\times$ such that
 $v(\lambda^2a\overline a)=-v(1-\xi)$ and
 $\lambda a(1+zx) \in \O_F + \varpi^n \O_E$.
Thus
\[
I(x,E^1)\subseteq \left\{ z\in E^1: v_E(1+xz)\geq \frac{v_E(1-\xi)}{2}\right\} .
\]
\end{proof}

\begin{lemma}
Let $v$ be a finite prime which  is split in $E$.

If $v(1-\xi)<0$, then
\begin{align*}
|I(\xi, \1_v)| \leq (|v(\xi)|+1)\vol \left(F_v^\times \bs (1+\varpi_v^{n(\Omega_v)}\O_{E_v})^\times
F_v^\times \right)\vol\left(U_{F_v}\right).
\end{align*}
If $v(\mathfrak{c}(\Omega))\geq v(1-\xi)\geq 0$, then
\begin{align*}
|I(\xi, \1_v)| \leq \left\{
  \begin{array}{ll}
    \vol \left(F_v^\times \bs (1+\varpi_v^{n(\Omega_v)}\O_{E_v})^\times F_v^\times
\right)\vol\left(E^1\cap x^{-1}(1+ \varpi^{\frac{v(1-\xi)}{2}}\O_{E_v})\right), & \hbox{if $v(1-\xi)$ is even;} \\
    0, & \hbox{if $v(1-\xi)$ is odd.}
  \end{array}
\right.
\end{align*}
\label{lemRe4}
\end{lemma}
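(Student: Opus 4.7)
The plan is to invoke Lemma~\ref{lemReBd}, which reduces bounding $|I(\xi, \1_v)|$ to bounding $\vol(I(x, E_v^1))$. I would parametrize $E_v^1$ by $b \mapsto z_b := (b, b^{-1})$ for $b \in F_v^\times$ and write $a = (a_1, a_2)$, $u_i = \lambda^{-1}a_i$, where $\lambda \in F_v^\times$ is the scalar realizing $\lambda^{-1}a\gamma_{xz_b} \in R_v^\times$. Using the explicit description of $R_v$, this condition unpacks into three groups: (i) $v(u_1) + v(u_2) = -v(1-\xi)$ (determinant being a unit); (ii) $v(u_i) \geq -n$ for $i=1,2$ together with $v(u_1 + u_2) \geq 0$ (coming from $\lambda^{-1}a \in \frac{1}{(\bar\tau-\tau)\varpi_v^n}(\O_{F_v} + \varpi_v^n\O_{E_v})$); and (iii) $v(u_i(1+x_ib^{\pm 1})) \geq 0$ together with a congruence mod $\varpi_v^n$ (from $\lambda^{-1}a(1+xz_b) \in \O_{F_v} + \varpi_v^n\O_{E_v}$).

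For Case~1 ($v(1-\xi) < 0$), the sum condition in (ii) combined with (i) forces $v(u_i) \geq 0$: if instead $v(u_1) < 0 \leq v(u_2)$, then $v(u_1+u_2) = v(u_1) < 0$, contradicting (ii). Thus $v(u_1) \in \{0, 1, \ldots, |v(1-\xi)|\}$, which gives $|v(1-\xi)|+1 = |v(\xi)|+1$ values. For each such value, an elementary analysis of (iii), in the spirit of the proof of Lemma~\ref{lemRe2}, pins down $v(b)$ so that the admissible $b$ lie in a single coset of $U_{F_v}$. Consequently $\vol(I(x, E_v^1)) \leq (|v(\xi)|+1)\vol(U_{F_v})$, and Lemma~\ref{lemReBd} yields the first bound.

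For Case~2 ($0 \leq v(1-\xi) \leq 2n$), we now have $v(u_1) + v(u_2) \leq 0$. If the valuations differ, the minimum must still be $\geq 0$ by (ii), forcing the sum to be $\geq 0$, so $v(1-\xi) = 0$ and both $v(u_i) = 0$; otherwise $v(u_1) = v(u_2) = -v(1-\xi)/2$. This yields the parity obstruction: when $v(1-\xi)$ is odd there is no admissible $(u_1, u_2)$ and $I(x, E_v^1) = \emptyset$. When $v(1-\xi)$ is even, condition (iii) sharpens to $v(1+x_1b), v(1+x_2 b^{-1}) \geq v(1-\xi)/2$, i.e.\ $1 + xz_b \in \varpi_v^{v(1-\xi)/2}\O_{E_v}$, so that $-z_b \in x^{-1}(1+\varpi_v^{v(1-\xi)/2}\O_{E_v}) \cap E_v^1$. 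Since Haar measure on $E_v^1$ is invariant under negation, $\vol(I(x, E_v^1)) \leq \vol(E_v^1 \cap x^{-1}(1+\varpi_v^{v(1-\xi)/2}\O_{E_v}))$, and Lemma~\ref{lemReBd} finishes the bound.

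The principal technical obstacle is the parity analysis in Case~2: both the vanishing for odd $v(1-\xi)$ and the sharp bound $v(1+x_ib^{\pm 1}) \geq v(1-\xi)/2$ hinge on the sum condition $v(u_1+u_2) \geq 0$, which comes from the non-maximality of $R_v \cap E_v = \O_{F_v} + \varpi_v^{n(\Omega_v)}\O_{E_v}$ inside $\O_{E_v}$. Without exploiting this feature of the order, the valuations $v(u_1)$ and $v(u_2)$ could be unbalanced in ways that destroy both the parity obstruction and the individual lower bound on $v(1+x_ib^{\pm 1})$.
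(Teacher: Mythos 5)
Your proposal follows essentially the same route as the paper's own proof: both set up the conditions for $a\gamma_{xz}\in F_v^\times R_v^\times$ in split coordinates (your (i)--(iii) are a rearrangement of the paper's conditions 1--4), use the ``sum'' constraint coming from $R_v\cap E_v=\O_{F_v}+\varpi_v^n\O_{E_v}$ to balance the valuations of $u_1,u_2$, and then split into $k<0$ (count $|v(\xi)|+1$ possible $v(u_1)$, with $v(b)$ pinned using $v(x_1)+v(x_2)=k$) and $k\geq 0$ (force $v(u_1)=v(u_2)=-k/2$, giving the parity obstruction and the containment in a translate of $x^{-1}(1+\varpi^{k/2}\O_{E_v})$), and finally invoke Lemma~\ref{lemReBd}. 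One small point where you are actually more careful than the paper: you note that the inclusion really lands in $-x^{-1}(1+\varpi^{k/2}\O_{E_v})$ and invoke negation-invariance of the measure on $E_v^1$, a sign the paper silently suppresses.
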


\begin{proof}
Let $a=(a_1, a_2), x=(x_1, x_2), z=(b, b^{-1})$ and $k=v(1-\xi)$. We take $\tau = (1,0)$. We look
at our orbital integral,
\[
I(\xi, \1)=\int_{F^\times\backslash E^\times} \int_{E^1} \1 \left( \begin{pmatrix} a & a z x \\
\overline a \overline z \overline x & \overline a \end{pmatrix} \right) \Omega(a) d^\times a
d^\times z.
\]
It is easy to see that $a \gamma_{xz} \in R^\times$ if and only if the following conditions are satisfied:
\begin{enumerate}
\item $a_2 \in \frac{1}{\varpi^n}\O_F$
\item $a_1 \in (-a_2 + \O_F )\cap \varpi^{-k-v(a_2)}U_F$
\item $a_2(1+x_2b^{-1})\in \O_F$
\item $a_1(1+x_1b) \in a_2(1+x_2 b^{-1}) +\varpi^n \O_F$.
\end{enumerate}
For the intersection in condition (2) to be nonempty, we must have either
$v(a_2)=v(a_1)=\frac{-k}{2}$ or $-k \geq v(a_2) \geq 0$ and $v(a_1)=-k-v(a_2)$.

Thus if $k \geq 0$, then we must have $v(a_2)=v(a_1)=\frac{-k}{2}$ and hence,
\begin{align*}
I(x, E^1)\subseteq E^1\cap \frac{1}{x}(1+ \varpi^{\frac{v(1-\xi)}{2}}\O_{E_v}).
\end{align*}

If $k< 0$, then $-k \geq v(a_2) \geq 0$ and $v(a_1)=-k-v(a_2)$. In this case we must have $v(a_2
x_2 b^{-1}) \geq 0$ and $v(a_1 x_1 b) \geq 0$. Thus $v(a_2 x_2) \geq v(b)\geq -v(a_1 x_1)$.  Thus
\begin{align*}
\vol\left(I(x, E^1)\right) \leq \sum_{l=0}^{-k}\sum_{m=k+l-v(x_1)}^{l+v(x_2)}\vol(U_F) = (|v(\xi)|+1)\vol(U_F).
\end{align*}
For the last equality we are using the fact that if $k<0$, then $k=v(x_1x_2)$.

We complete the proof  by applying Lemma \ref{lemReBd}.
\end{proof}

We now bound the volume term that appears in the results above.

\begin{lemma}\label{lemvolume}
Assume that $v$ is unramified in $E$. Then,
$$
\vol(E_v^1\cap x^{-1}(1+\varpi_v^k\O_{E_v})) \leq \left\{
  \begin{array}{ll}
    \vol(E_v^1\cap U_{E_v})(1 + |v(x\bar{x})|), & \hbox{if $k = 0$ and $v$  split;} \\
    \vol(E_v^1), & \hbox{if $k = 0$ and $v$ not split;} \\
    \vol(E_v^1) q_v^{-k}(1 + q_v^{-1})^{-1}, & \hbox{if $k > 0$ and $v$ inert;} \\
    \vol(E_v^1\cap U_{E_v}) q_v^{-k}(1 - q_v^{-1})^{-1}, & \hbox{if $k > 0$ and $v$ split.}
  \end{array}
\right.
$$
If $v$ is ramified in $E$ there exists a constant $C(E_v, F_v)$ such that,
$$
\vol(E_v^1\cap x^{-1}(1+\varpi_v^k\O_{E_v})) \leq C(E_v, F_v) q_v^{-k}
$$
for all $k\geq 0$.
\end{lemma}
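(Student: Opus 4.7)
The plan is to reduce the problem to computing the volume of a fixed compact subgroup of $E_v^1$, and then to carry out that computation via the filtration of $U_{E_v}$ by principal congruence subgroups $U_{E_v}^j = 1 + \varpi_v^j \mathcal{O}_{E_v}$. The key observation is that for $k \geq 1$ the set $1 + \varpi_v^k \mathcal{O}_{E_v}$ is a multiplicative subgroup of $E_v^\times$, so $H_k := E_v^1 \cap (1 + \varpi_v^k \mathcal{O}_{E_v})$ is a subgroup of $E_v^1$. If $E_v^1 \cap x^{-1}(1 + \varpi_v^k \mathcal{O}_{E_v})$ is nonempty, containing some $z_0$, then $xz_0 \in 1 + \varpi_v^k \mathcal{O}_{E_v}$ forces $(xz_0)^{-1}(1 + \varpi_v^k \mathcal{O}_{E_v}) = 1 + \varpi_v^k \mathcal{O}_{E_v}$, and the substitution $z \mapsto z_0 z$ identifies $E_v^1 \cap x^{-1}(1 + \varpi_v^k \mathcal{O}_{E_v}) = z_0 \cdot H_k$, of volume $\vol(H_k)$. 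So for $k \geq 1$ the problem reduces to bounding $\vol(H_k)$ independently of $x$.

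For $v$ unramified I would compute $\vol(H_k)$ by evaluating the successive indices $[H_j : H_{j+1}]$. In the inert case, $U_{E_v}^j / U_{E_v}^{j+1}$ is additively isomorphic to the residue field of $E_v$ for $j \geq 1$, and on these graded quotients the multiplicative norm reduces to the additive trace $\mathbb{F}_{q_v^2} \to \mathbb{F}_{q_v}$, which is surjective with kernel of size $q_v$, so $[H_j : H_{j+1}] = q_v$. For $j = 0$, the multiplicative norm $\mathbb{F}_{q_v^2}^\times \to \mathbb{F}_{q_v}^\times$ has kernel of size $q_v + 1$, giving $[E_v^1 : H_1] = q_v + 1$. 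Combining yields $\vol(H_k) = \vol(E_v^1) q_v^{-k} (1 + q_v^{-1})^{-1}$. In the split case, using the parameterization $E_v^1 = \{(b, b^{-1}) : b \in F_v^\times\}$, one checks that $(b, b^{-1}) \in 1 + \varpi_v^k \mathcal{O}_{E_v}$ reduces for $k \geq 1$ to $b \in 1 + \varpi_v^k \mathcal{O}_{F_v}$ (the condition on $b^{-1}$ being automatic), giving $\vol(H_k) = \vol(U_{F_v}) q_v^{-k} (1 - q_v^{-1})^{-1}$.

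The two $k = 0$ cases are handled directly. For $v$ not split, $N(z) = 1$ forces $v_E(z) = 0$, so $E_v^1 \subseteq U_{E_v} \subseteq x^{-1} \mathcal{O}_{E_v}$ for every $x \in E_v^\times$, and the intersection is all of $E_v^1$. For $v$ split, parameterizing $E_v^1$ as above and writing $x = (x_1, x_2)$, the condition $xz \in 1 + \mathcal{O}_{E_v} = \mathcal{O}_{E_v}$ becomes $-v(x_1) \leq v(b) \leq v(x_2)$, admitting at most $1 + v(x\bar x)$ allowed values of $v(b)$, each contributing $\vol(U_{F_v}) = \vol(E_v^1 \cap U_{E_v})$, from which the bound follows.

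The main obstacle is the ramified case. The translation step goes through without modification, so it again suffices to bound $\vol(H_k)$. However, the filtration analysis is more delicate: the norm map on $U_{E_v}^j / U_{E_v}^{j+1} \cong \mathbb{F}_{q_v}^+$ is nonzero on some graded pieces and vanishes on others, with the pattern depending on the parity of $j$ and interacting with wild ramification when the residue characteristic of $F_v$ is $2$. Nevertheless, a case-by-case analysis of the leading terms of $N(1 + \varpi_E^j a)$ shows that, uniformly in $k \geq 0$, $\vol(H_k) \ll q_v^{-k}$ with implied constant depending only on $E_v/F_v$; absorbing these bounded factors into $C(E_v, F_v)$ produces the asserted inequality.
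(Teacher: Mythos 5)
Your proof is correct and follows essentially the same approach as the paper: after the translation step reducing the volume to $\vol(H_k)$, one computes the index $[E_v^1 : H_k]$ as the size of the kernel of the norm map on units, and your filtration-by-$U_{E_v}^j$ computation is the paper's count of $[U_{E_v}:1+\varpi_v^k\O_{E_v}]$ divided by $[U_{F_v}:N(1+\varpi_v^k\O_{E_v})]$ (using surjectivity of the norm in the unramified case), unwound one graded piece at a time. For the ramified case the paper makes your asserted leading-term analysis precise by citing \cite[Corollaire 1, pg 93]{serre:1962} for the explicit form of $N(1+\varpi_v^k\O_{E_v})$.
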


\begin{proof}
First we assume that $k = 0$. We note that when $E/F$ is not split we have
$$
\vol(E^1\cap x^{-1} \O_E) =
\left\{
  \begin{array}{ll}
    \vol(E^1), & \hbox{if $x\in\O_E$;} \\
    0, & \hbox{otherwise.}
  \end{array}
\right.
$$
Next we assume $E/F$ is split. We write $E = F\oplus F$ and $x = (x_1, x_2)$. Then we have
$$
x^{-1} \O_E = \left\{ (\alpha_1, \alpha_2) : v(\alpha_1) \geq -v(x_1), v(\alpha_2) \geq -v(x_2)
\right\}.
$$
Let $(y_1, y_2) \in E^1$. Then we have $v(y_1y_2) = 0$ and hence $y \in \frac 1x \O_E$ implies
that
$$
- v(x_1) \leq v(y_1) \leq  v(x_2).
$$
Hence the intersection is empty if $v(x\bar{x}) > 0$, and if $v(x\bar{x}) \leq 0$ then
$$
\vol(E^1\cap x^{-1} \O_E) = (1 + |v(x\bar{x})|) \vol(E^1\cap U_E).
$$

We now assume $k > 0$. If we assume that $E^1\cap x^{-1}(1+\varpi^k\O_E) \neq \emptyset$
then we clearly have
$$
\vol(E^1\cap x^{-1}(1+\varpi^k\O_E)) = \vol(E^1\cap (1+\varpi^k\O_E)).
$$
So we need to compute the order of
$$
(E^1\cap U_E) / E^1\cap (1+\varpi^k\O_E) \iso (E^1\cap U_E) (1 + \varpi^k\O_E) / (1 + \varpi^k\O_E).
$$
Now we have
$$
(E^1\cap U_E) (1 + \varpi^k\O_E) = \left\{ x \in U_E : N(x) \in N (1 + \varpi^k\O_E) \right\}.
$$
Hence we are looking to compute the size of the kernel of the map
$$
N : U_E/(1 + \varpi^k\O_E) \to U_F / N (1 + \varpi^k\O_E).
$$
When $E/F$ is unramified quadratic this map is surjective and we have
$$
\# U_E/(1 + \varpi^k\O_E) = q^{2k}(1 - q^{-2})
$$
and
$$
\# U_F / N (1 + \varpi^k\O_E)=\# U_F/(1 + \varpi^k\O_F) = q^{k}(1 - q^{-1})
$$
and hence the kernel has order
$
q^{k}(1 + q^{-1}).
$
Next we assume that $E = F\oplus F$, then the norm map is surjective and its kernel has order
$q^k(1 - q^{-1})$.

Finally we assume that $E/F$ is ramified. We write the discriminant as $\d_{E/F} = \p^{t+1}$. From \cite[Corollaire 1, pg 93]{serre:1962} we deduce that when $t$ is even and $k > \frac t2$,
$$
N(1 + \varpi^k\O_E) = 1 + \varpi^{k + \frac t2}\O_F,
$$
and when $t$ is odd $k\geq \frac t2$,
$$
N(1 + \varpi^k\O_E) = 1 + \varpi^{k + \frac{t+1}{2}}\O_F.
$$
Since $\# U_E/(1 + \varpi^k\O_E) = q^{2k}(1 - q^{-1})$ we see that when $k > \frac t2$, the order of the kernel of $N : U_E/(1 + \varpi^k\O_E) \to U_F / N (1 + \varpi^k\O_E)$ is at least $q^{k - \frac{t+1}{2}}$. Thus there exists some constant $C'(E)$ such that for all $k\geq 0$ the order of this kernel is at least $C'(E) q^{k}$.
\end{proof}

The following lemma is a direct consequence of Corollary \ref{corReBd} and Lemmas  \ref{lemRe4} and \ref{lemvolume}, and the fact that
$$
\vol(F_v^\times\bs F_v^\times(1+\varpi_v^{n(\Omega_v)}\O_{E_v})^\times) = \vol(U_{F_v}\bs U_{E_v}) q_v^{-n(\Omega_v)} L(1, \eta_v).
$$

\begin{lemma} \label{lemma:finalbound}
Assume $n(\Omega_v) > 0$, and let $k = \frac{v_F(1-\xi)}{2}$. There exists a constant $C(E_v, F_v)$ that is equal to one for all $v$ unramified in $E$ and such that
\begin{align*}
|I(\xi, \1_v)|
\leq & q_v^{-n(\Omega_v)} L(1, \eta_v) \vol(U_{F_v}\bs U_{E_v}) \vol(E_v^1\cap U_{E_v}) C(E_v, F_v)
\\
\times &
\begin{cases}
q_v^{-k}L(1, \eta_v) & \text{ when $k>0$}\\
1 & \text{ when $k\leq 0$ and $v$ is not split}\\
1 + |v_F(\xi)| & \text{ when $k\leq 0$ and $v$ is split.}
\end{cases}
\end{align*}
\end{lemma}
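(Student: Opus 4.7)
The plan is to treat the split and non-split cases separately, apply the appropriate ``pre-bound'' from Corollary \ref{corReBd} or Lemma \ref{lemRe4}, then substitute the volume estimates of Lemma \ref{lemvolume} and clean up the resulting constants. Since $n(\Omega_v) > 0$, the formula
\[
\vol(F_v^\times\bs F_v^\times(1+\varpi_v^{n(\Omega_v)}\O_{E_v})^\times) = \vol(U_{F_v}\bs U_{E_v})\, q_v^{-n(\Omega_v)}\, L(1,\eta_v)
\]
supplies the first four factors on the right-hand side in every case, so the task reduces to bounding the remaining volume factor.

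I would first dispense with the split case. When $v$ is split in $E$, Lemma \ref{lemRe4} gives exactly $(|v_F(\xi)|+1)\,\vol(U_{F_v})$ when $k\leq 0$, and when $k>0$ the bound is $\vol(E_v^1\cap x^{-1}(1+\varpi_v^k\O_{E_v}))$; the split part of Lemma \ref{lemvolume} turns the latter into $\vol(E_v^1\cap U_{E_v})\,q_v^{-k}(1-q_v^{-1})^{-1}$, which is $\vol(E_v^1\cap U_{E_v})\,q_v^{-k}L(1,\eta_v)$ since $\eta_v$ is trivial at a split place. The constants $\vol(U_{F_v})$ for $k\leq 0$ are bounded by $\vol(E_v^1\cap U_{E_v})$ times something depending only on $E_v/F_v$, which I absorb into $C(E_v,F_v)$.

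For the non-split case, I apply Corollary \ref{corReBd}. The key point is that the cutoff $\tfrac{v_E(1-\xi)}{2}$ appearing there equals $k$ in the inert case (where $v_E|_{F_v}=v_F$ and $\varpi_{E_v}=\varpi_v$) and equals $2k$ in the ramified case (where $v_E|_{F_v}=2v_F$ and $\varpi_v\O_{E_v}=\varpi_{E_v}^{2}\O_{E_v}$); either way the ideal $\varpi_{E_v}^{\lfloor v_E(1-\xi)/2\rfloor}\O_{E_v}$ equals $\varpi_v^{k}\O_{E_v}$, so a translation in $E_v^1$ matches the set in Corollary \ref{corReBd} up to volume with $E_v^1\cap x^{-1}(1+\varpi_v^k\O_{E_v})$. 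When $k\leq 0$ the condition is vacuous, giving at most $\vol(E_v^1)=\vol(E_v^1\cap U_{E_v})$ since $E_v^1\subseteq U_{E_v}$ at non-split places. When $k>0$, Lemma \ref{lemvolume} produces $\vol(E_v^1)\,q_v^{-k}(1+q_v^{-1})^{-1}=\vol(E_v^1\cap U_{E_v})\,q_v^{-k}L(1,\eta_v)$ in the inert case and at most $C'(E_v,F_v)\,q_v^{-k}$ in the ramified case.

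The only place where anything is not completely mechanical is the ramified subcase: here $L(1,\eta_v)=1$ and Lemma \ref{lemvolume} only gives an existence statement for a constant, so I must set $C(E_v,F_v)$ to absorb that constant together with the discrepancy $\vol(E_v^1\cap U_{E_v})^{-1}$ needed to match the stated shape, and verify $C(E_v,F_v)=1$ at all $v$ unramified in $E$. That bookkeeping step is the main (minor) obstacle; everything else is direct substitution followed by separating the three displayed cases according to the sign of $k$ and the splitting behavior of $v$.
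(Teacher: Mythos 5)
Your proposal matches the paper's proof, which is exactly the one-line observation that the lemma is a direct consequence of Corollary \ref{corReBd}, Lemma \ref{lemRe4}, Lemma \ref{lemvolume}, and the identity $\vol(F_v^\times\bs F_v^\times(1+\varpi_v^{n(\Omega_v)}\O_{E_v})^\times) = \vol(U_{F_v}\bs U_{E_v})\, q_v^{-n(\Omega_v)}\, L(1,\eta_v)$; you have simply made the intermediate bookkeeping (the inert/ramified valuation comparison, the $z\mapsto -z$ translation, the identification of $(1\pm q_v^{-1})^{-1}$ with $L(1,\eta_v)$, and the absorption of the ramified volume constant) explicit.
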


\subsection{Global calculations}
\label{globalcalculations}

We recall that
$$
I(f)=\sum_{\xi \in \epsilon NE^\times}I(\xi,f) + \vol(\A_F^\times E^\times \backslash
\A_E^\times)\left[I\left(0, f \right)+\delta (\Omega^2)I\left( \infty,f \right)\right].
$$
We now apply the local calculations of the previous sections to this global distribution. We denote
by
$$
I_{reg}(f) = \sum_{\xi \in \epsilon NE^\times}I(\xi,f),
$$
and
$$
I_{irreg}(f) = \vol(\A_F^\times E^\times \backslash \A_E^\times)\left[I\left(0, f \right)+\delta
(\Omega^2)I\left( \infty,f \right)\right].
$$
Thus
\begin{align}\label{equation:global}
I(f)=I_{reg}(f)+I_{irreg}(f).
\end{align}
We begin by computing $I_{irreg}(f)$.

\begin{proposition}
For $f$ as defined in Section \ref{testfunction},
$$
I_{irreg}(f) = \frac{2^{[F:\Q]+1} L(1, \eta) L_{S(\Omega)}(1, \eta)
\sqrt{|\Delta_F|}}{\sqrt{c(\Omega)} \sqrt{|\Delta_E|}}
\left(1+\delta (\Omega^2) \delta(\N)\prod_{v \in \Sigma_\infty}(-1)^{k_v-1}\right) \I\left(f_{\p} \right) ,
$$
where $\delta(\N) = 0$ if $\N \neq \O_F$ and $\delta(\O_F) = 1$. \label{propirreg}
\end{proposition}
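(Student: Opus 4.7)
Since $I(0,f)$ and $I(\infty,f)$ both factor as products over places of the local integrals computed in Section~\ref{irregularcosets}, the plan is to evaluate each local factor, handle $I(\infty,f)$ first (where it is easy to see that most terms vanish), and then assemble the product for $I(0,f)$.

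First I would dispose of $I(\infty,f)$. By Lemma~\ref{lemN} the factor at every $v\mid\N$ vanishes, so $I(\infty,f)=0$ whenever $\N\neq\O_F$. When $\N=\O_F$, at each finite place the fact that $\bmx 0&\epsilon_v\\1&0\emx\in R_v^\times$ gives $I(\infty,f_v)=I(0,f_v)$, while at each archimedean place Corollary~\ref{corIrIn} produces $(-1)^{k_v-1}I(0,f_v)$ when $m_v=0$ and zero otherwise. The prefactor $\delta(\Omega^2)$ forces $m_v=0$ for all $v\in\Sigma_\infty$, so
\[
\delta(\Omega^2)\, I(\infty,f) \;=\; \delta(\Omega^2)\,\delta(\N)\,\Bigl(\prod_{v\in\Sigma_\infty}(-1)^{k_v-1}\Bigr)\, I(0,f).
\]

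Next I would attack $I(0,f)=\prod_v I(0,f_v)$. Lemma~\ref{lemI0} gives the archimedean factor $2$, contributing $2^{[F:\Q]}$ in total. Lemma~\ref{lemN0} handles each $v\mid\N$, where $\Omega_v$ is unramified by hypothesis, giving $\vol(F_v^\times\bs F_v^\times U_{E_v})$. For finite $v\nmid\N$ with $v\neq\p$, the identity $I(0,f_v)=\vol(F_v^\times\bs F_v^\times(\O_{F_v}+\varpi_v^{n(\Omega_v)}\O_{E_v})^\times)\,\I(\1_v)$ reduces the problem to computing $\I(\1_v)$. The key claim I aim to establish by a case check is that $\I(\1_v)=1$ at every such place: when $n(\Omega_v)=0$, Lemmas~\ref{lemIrUn}, \ref{lemIrRa} and \ref{lemIrSp} combined with the observation that nontrivial coset representatives of $E_v^\times/F_v^\times(\O_{F_v}+\varpi_v^{n(\Omega_v)}\O_{E_v})^\times$ lie outside $Z_vR_v^\times$ leave only the trivial coset; and when $n(\Omega_v)>0$, the element $1+\tau_v\varpi_v^{n(\Omega_v)-1}$ has positive Cartan invariant by Lemma~\ref{lemdoublecoset}, so $\1_v(1+\tau_v\varpi_v^{n(\Omega_v)-1})=0$ and only $\1_v(1)=1$ survives. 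At $v=\p$ I would simply retain the factor $\I(f_\p)$ together with its local volume.

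The final step is the global assembly. Using the normalizations of Section~\ref{notation}, the product of $\vol(F_v^\times\bs F_v^\times U_{E_v})$ over the places $v\notin S(\Omega)$ combines with the global formulas $\prod_v\vol(U_{F_v})=|\Delta_F|^{-1/2}$ and $\prod_v\vol(U_{E_v})=|\Delta_E|^{-1/2}$ to produce $\sqrt{|\Delta_F|/|\Delta_E|}$ together with the unramified local $L$-factors that make up $L_{S(\Omega)}(1,\eta)/L(1,\eta)$. At $v\in S(\Omega)$ the index of $(\O_{F_v}+\varpi_v^{n(\Omega_v)}\O_{E_v})^\times$ in $\O_{E_v}^\times$ introduces the factor $q_v^{-n(\Omega_v)}$ responsible for the $\sqrt{c(\Omega)}^{-1}$ piece. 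Multiplying by $\vol(\A_F^\times E^\times\bs\A_E^\times)=2L(1,\eta)$ and combining with the $I(\infty,f)$ relation above yields the stated formula. The main obstacle I anticipate is the careful bookkeeping of local volumes and local $L$-factors at the ramified places of $\Omega$ and of $E/F$, so that the compact form $\tfrac{L_{S(\Omega)}(1,\eta)\sqrt{|\Delta_F|}}{\sqrt{c(\Omega)|\Delta_E|}}$ emerges cleanly.
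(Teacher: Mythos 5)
Your plan matches the paper's proof essentially exactly: the paper cites Lemmas \ref{lemN0}, \ref{lemN}, \ref{lemIrUn}, \ref{lemIrRa}, \ref{lemIrSp}, \ref{lemI0} and Corollary \ref{corIrIn} to reduce $I(0,f)$ and $I(\infty,f)$ to the product of local volumes (times $\I(f_\p)$), notes $\vol(F_v^\times\bs E_v^\times)=2$ for archimedean $v$ and the formula for $\vol(F_v^\times\bs F_v^\times(1+\varpi_v^{n(\Omega_v)}\O_{E_v})^\times)$ at finite $v$, and then declares the proposition follows. Your observations that $\I(\1_v)=1$ at every finite $v\ne\p$ (via Lemma \ref{lemdoublecoset}), that $\delta(\Omega^2)=1$ forces $m_v=0$ at archimedean places, and that the global assembly uses $\prod_{v<\infty}\vol(U_{F_v})=|\Delta_F|^{-1/2}$ (and similarly for $E$) together with $\vol(\A_F^\times E^\times\bs\A_E^\times)=2L(1,\eta)$, are precisely the details the paper leaves implicit, so your proof is a correct expansion of the same argument.
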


\begin{proof}
By Lemmas \ref{lemN0}, \ref{lemN}, \ref{lemIrUn}, \ref{lemIrRa}, \ref{lemIrSp} and   \ref{lemI0},
and Corollary \ref{corIrIn},
$$
\vol(\A_F^\times E^\times \backslash \A_E^\times)\left[I\left(0, f \right)+\delta (\Omega^2)I\left(
\infty,f \right)\right]
$$
is equal to
$$
2 L(1,\eta)  \I\left(f_{\p} \right) \prod_{v< \infty} \vol (F_v^\times\backslash F_v^\times
(1+\varpi_v^{n(\Omega_v)}\O_{E_v})^\times)  \prod_{v \in
\Sigma_\infty}\vol(F_v^\times\backslash E_v^\times)  \left(1+\delta (\Omega^2) \delta(\N)
(-1)^{k_v-1}\right).
$$
Recall that for $v\in\Sigma_\infty$,
$$
\vol(F_v^\times\backslash E_v^\times) = \vol(\R^\times\bs\C^\times) = 2,
$$
and for $v$ finite we have
$$
\vol (F_v^\times\backslash F_v^\times (1+\varpi_v^{n(\Omega_v)}\O_{E_v})^\times) = \left\{
  \begin{array}{ll}
    \vol(U_{F_v}\bs U_{E_v}), & \hbox{if $v\not\in S(\Omega)$;} \\
    \vol(U_{F_v}\bs U_{E_v}) q_v^{-n(\Omega_v)} L(1, \eta_v), & \hbox{if $v\in S(\Omega)$.}
  \end{array}
\right.
$$
The proposition now follows.
\end{proof}

We now consider the regular terms in the trace formula. For each $v\in\Sigma_\infty$ we let
$\iota_v$ denote the corresponding embedding $F\hookrightarrow \R$. We note that $\xi \in \eps
NE^\times$ if and only if
\begin{enumerate}
  \item $\iota_v(\xi) < 0$ for all $v\in\Sigma_\infty$,
  \item $v(\xi)$ is odd for all $v \mid \N$, and
  \item $\eta_v(\xi) = 1$ for all finite $v \nmid \N$.
\end{enumerate}

We define
$$
X_\p = \left\{ \gamma \in R_\p : \varpi_\p^{-1} \gamma \not\in R \right\}.
$$
For $f_\p\in\H(G(f_\p), R^\times_\p)$ we define
$$
n(f_\p) = \max \left\{ v_\p(\det \gamma) : \gamma\in X_\p, f_\p(\gamma) \neq 0 \right\}
$$
and set $\mathfrak I(f_\p) = \p^{n(f_\p)}$.

We denote by $S(\Omega, \N, f_\p)$ the set of $\xi\in\eps NE^\times$ such that
\begin{enumerate}
  \item $v(\xi) \geq 1$ for all $v\mid\N$, and
  \item $(1 - \xi)^{-1} \in (\mathfrak c(\Omega) \d_{E/F} \mathfrak I(f_\p))^{-1}$.
\end{enumerate}

\begin{lemma}
We have
$$
I_{reg}(f) = \sum_{\xi\in S(\Omega, \N, f_\p)} I(\xi, f).
$$
Furthermore, the set $S(\Omega, \N, f_\p)$ is finite, and empty when $|\N| \geq d_{E/F}
(c(\Omega)|  \mathfrak I(f_\p)|)^{h_F}$.
\label{lemglobalreg}
\end{lemma}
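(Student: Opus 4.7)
My plan splits into three parts. First, to identify which $\xi$'s contribute, I would use the local factorization $I(\xi, f) = \prod_v I(\xi, f_v)$ together with the vanishing results already established. At $v \mid \N$, Lemma \ref{lemReN} forces $v(\xi) \geq 1$ for non-vanishing. At finite $v \nmid \N$ with $v \neq \p$, Lemma \ref{lemRe} with $\gamma = 1$ forces $v(1-\xi) \leq v(\mathfrak d_{E/F}\mathfrak c(\Omega))$. At $v = \p$, I expand $f_\p$ as a linear combination of characteristic functions of double cosets $F_\p^\times R_\p^\times \gamma R_\p^\times$ with $\gamma \in X_\p$ and apply Lemma \ref{lemRe} termwise; the definition of $n(f_\p)$ shows that $I(\xi, f_\p) = 0$ unless $v_\p(1-\xi) \leq v_\p(\mathfrak d_{E/F}\mathfrak c(\Omega)\mathfrak I(f_\p))$. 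These local conditions combine to give exactly the definition of $S(\Omega, \N, f_\p)$.

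For the finiteness, I would set $\alpha = 1 - \xi$ and $\mathfrak d = \mathfrak d_{E/F}\mathfrak c(\Omega)\mathfrak I(f_\p)$. Since $\xi$ is totally negative, $\alpha_v > 1$ at each archimedean place. The constraint $v(\alpha) \leq v(\mathfrak d)$ at every finite $v$, combined with the product formula, yields $|N_{F/\Q}(\alpha)| = \prod_v q_v^{v(\alpha)} \leq |\mathfrak d|$, hence $\alpha_v < |\mathfrak d|$ at each archimedean place. Decomposing $(\alpha) = \mathfrak a^+(\mathfrak a^-)^{-1}$ into coprime integral ideals, the valuation bound forces $\mathfrak a^+ \mid \mathfrak d$ (finitely many divisors), and $|N(\alpha)| \geq 1$ forces $N(\mathfrak a^-) \leq N(\mathfrak a^+) \leq |\mathfrak d|$ (finitely many integral ideals of bounded norm). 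For each such pair with $\mathfrak a^+(\mathfrak a^-)^{-1}$ principal, generators of $(\alpha)$ form a coset of $\O_F^\times$; via the logarithmic embedding, the bounded region $\{y \in F_\infty^\times : 1 < y_v < |\mathfrak d|\}$ intersects this coset in finitely many points by Dirichlet's unit theorem.

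For emptiness, I would write $(\xi) = \mathfrak c^+(\mathfrak c^-)^{-1}$ analogously. The condition $v(\xi) \geq 1$ at $v \mid \N$ gives $\N \mid \mathfrak c^+$, hence $N(\mathfrak c^+) \geq |\N|$. The ultrametric property forces $v(1-\xi) = v(\xi)$ whenever $v(\xi) < 0$, so $\mathfrak c^- = \mathfrak a^-$. Since $|1 - \xi_v| = 1 + |\xi_v| > |\xi_v|$ at each archimedean place, $|N(\xi)| < |N(\alpha)|$. Combining these gives
\[
\frac{|\N|}{N(\mathfrak c^-)} \;\leq\; |N(\xi)| \;<\; |N(\alpha)| \;\leq\; \frac{|\mathfrak d|}{N(\mathfrak c^-)},
\]
so $|\N| < |\mathfrak d| = d_{E/F}\, c(\Omega)\,|\mathfrak I(f_\p)|$. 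Since $(c(\Omega)|\mathfrak I(f_\p)|)^{h_F} \geq c(\Omega)|\mathfrak I(f_\p)|$ when $h_F \geq 1$, this is sharper than the claimed emptiness bound and in particular implies it.

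The main obstacle I anticipate is the finiteness argument: one must carefully handle the possibility that $\xi$ has denominators at finite places outside $\N \cup \{\p\}$, via the positive/negative decomposition of the principal fractional ideal $(\alpha)$ combined with a Dirichlet-unit-theorem count in the logarithmic embedding.
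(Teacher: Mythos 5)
Your proof is correct, and the route you take for finiteness and emptiness is genuinely different from the paper's.

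For the support statement the two approaches coincide: both invoke Lemmas \ref{lemReN} and \ref{lemRe} (with the termwise expansion of $f_\p$ over double cosets and the definition of $\mathfrak I(f_\p)$), so there is nothing to compare there.

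For finiteness the paper argues more economically. It fixes a single nonzero $x\in\mathfrak c(\Omega)\d_{E/F}\mathfrak I(f_\p)$; each $\xi\in S(\Omega,\N,f_\p)$ then determines $m=(1-\xi)^{-1}x\in\O_F$, and total negativity of $\xi$ forces $|\iota_v(m-x)|<|\iota_v(x)|$ at every archimedean place, so $m-x$ (and hence $\xi$) lies in a finite set of algebraic integers in a bounded box. Your argument decomposes the principal fractional ideal $(1-\xi)$ into coprime numerator and denominator ideals $\mathfrak a^\pm$, bounds $N(\mathfrak a^+)$ via $\mathfrak a^+\mid\mathfrak d$ and $N(\mathfrak a^-)$ via $|N_{F/\Q}(1-\xi)|>1$, then counts generators via the logarithmic embedding and Dirichlet's unit theorem. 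Both are correct; yours has more moving parts, but it buys you something at the next step.

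For emptiness the paper again uses the fixed $x$: it notes $m-x\in\N\setminus\{0\}$ and deduces emptiness once the bounded box contains no nonzero element of $\N$, invoking an $h_F$-th power (so that a generator $x$ with controlled norm is available) to produce the stated threshold $|\N|\geq d_{E/F}(c(\Omega)|\mathfrak I(f_\p)|)^{h_F}$. Your computation works directly with the ideal $(1-\xi)$, never needs to choose a generator, and arrives at the cleaner threshold $|\N|<d_{E/F}c(\Omega)|\mathfrak I(f_\p)|$ for nonemptiness — no $h_F$-exponent at all. Since $d_{E/F}c(\Omega)|\mathfrak I(f_\p)|\leq d_{E/F}(c(\Omega)|\mathfrak I(f_\p)|)^{h_F}$, this strictly implies the paper's claim (and makes the paper's ``which is clearly the case'' step unnecessary). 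The only thing to watch in your writeup is the implicit appeal to $\mathfrak c^-=\mathfrak a^-$ holding at \emph{every} finite place: it requires noting that $v(\alpha)<0$ and $v(\xi)<0$ are equivalent conditions (both follow from the ultrametric inequality applied in each direction), but that is a one-line fix.
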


\begin{proof}
The fact that $I_{reg}(f)$ is supported on $S(\Omega, \N, f_\p)$ follows from Lemmas \ref{lemReN}
and \ref{lemRe}. Suppose now that $\xi\in S(\Omega, \N, f_\p)$. We fix $0 \neq x\in \mathfrak
c(\Omega) \d_{E/F} \mathfrak I(f_\p)$, then
$$
(1 - \xi)^{-1} x = m\in \O_F
$$
and hence
$$
\xi = \frac{m-x}{m}.
$$
We now take $v\in\Sigma_\infty$ and consider $\iota_v : F \hookrightarrow \R$, then since
$\iota_v(\xi) < 0$ it follows that
$$
|\iota_v(m-x)| < |\iota_v(x)|.
$$
The finiteness of $S(\Omega, \N, f_\p)$ now follows from the finiteness of
$$
\left\{ y\in \O_F : |\iota_v(y)| < |\iota_v(x)| \text{ for all } v\in\Sigma_\infty \text{ and }
x\in \mathfrak c(\Omega) \d_{E/F} \mathfrak I(f_\p) \right\}.
$$
We note that since $v(\xi) \geq 1$ for all $v \mid \N$ we also require $m - x \in \N$. Hence
$S(\Omega, \N, f_\p)$ is empty whenever
$$
\left\{ y\in \O_F : |\iota_v(y)| < |\iota_v(x)| \text{ for all } v\in\Sigma_\infty \text{ and }
x\in \mathfrak c(\Omega) \d_{E/F} \mathfrak I(f_\p) \right\} \cap \N = \{ 0\},
$$
which is clearly the case when $|\N| \geq d_{E/F}|\mathfrak c(\Omega)  \mathfrak I(f_\p)|^{h_F}$.
\end{proof}

\begin{corollary}
For $\N$ sufficiently large, e.g. for $\N$ with absolute norm at least $d_{E/F}
\left(c(\Omega)|\mathfrak I(f_\p)|\right)^{h_F}$, we have
\begin{align*}
I(f) = \frac{2^{[F:\Q]+1} L(1, \eta) L_{S(\Omega)}(1, \eta) \sqrt{|\Delta_F|}}{\sqrt{c(\Omega)}
\sqrt{|\Delta_E|}}  \left(1+\delta (\Omega^2)
\delta(\N)\prod_{v \in \Sigma_\infty}(-1)^{k_v-1}\right) \I\left(f_{\p} \right).
\end{align*}
\label{corgeomNlarge}
\end{corollary}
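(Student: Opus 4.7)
The plan is very short: this corollary is essentially an immediate combination of the results already established in Section \ref{geometric}, so the proof should just assemble them in the right order.

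First, I would start from the decomposition \eqref{equation:global}, namely $I(f) = I_{reg}(f) + I_{irreg}(f)$. The irregular piece has already been computed in closed form in Proposition \ref{propirreg}, where it is shown to equal precisely the right-hand side of the corollary statement (with no restriction on $\N$). So the only thing left to check is that $I_{reg}(f) = 0$ under the stated size hypothesis on $\N$.

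For this vanishing, I would invoke Lemma \ref{lemglobalreg}, which expresses $I_{reg}(f)$ as a finite sum over $S(\Omega, \N, f_\p)$ of the regular orbital integrals $I(\xi, f)$, and which moreover asserts that $S(\Omega, \N, f_\p)$ is empty once $|\N| \geq d_{E/F}(c(\Omega)|\mathfrak{I}(f_\p)|)^{h_F}$. Under the hypothesis of the corollary, this set is therefore empty and the regular contribution vanishes term by term.

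Putting these two ingredients together gives $I(f) = I_{irreg}(f)$, and substituting the explicit formula from Proposition \ref{propirreg} yields the claimed identity. There is no real obstacle here, since all the analytic and combinatorial content, most notably the archimedean matrix-coefficient calculation, the ramified-place computation, and the arithmetic argument showing that the diophantine set of regular $\xi$'s is eventually empty, has been absorbed into the preceding lemmas and proposition. The only mild care needed is bookkeeping: making sure the two factors of $\vol(F_v^\times \bs F_v^\times(1+\varpi_v^{n(\Omega_v)}\O_{E_v})^\times)$ assembled in Proposition \ref{propirreg} indeed combine with $\vol(\A_F^\times E^\times \bs \A_E^\times) = 2L(1,\eta)$ and the archimedean factors of $2$ into the prefactor displayed in the corollary, which is exactly what Proposition \ref{propirreg} already records.
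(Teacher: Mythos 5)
Your proof is correct and follows exactly the route the paper intends: the corollary is stated immediately after Lemma \ref{lemglobalreg} without further proof because it is precisely the combination of $I(f) = I_{reg}(f) + I_{irreg}(f)$, the vanishing of $I_{reg}(f)$ when $|\N| \geq d_{E/F}(c(\Omega)|\mathfrak I(f_\p)|)^{h_F}$ from Lemma \ref{lemglobalreg}, and the closed-form evaluation of $I_{irreg}(f)$ from Proposition \ref{propirreg}. Nothing is missing.
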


We note that under certain simplifying assumptions we can explicitly compute the regular terms in
the geometric side of the trace formula.

We define, for   integers $k$ and $m$ with $|m|<k$, polynomials
$$
P_{k,m}(x) = \frac{1}{(1 - x)^{k - 1}} \sum_{i=0}^{k - |m|-1} \binom{k+m - 1}{i}\binom{k-m-1}{i} (-x)^i.
$$
For an ideal $\mathfrak a \subset \O_F$ we define
$$
R_E(\mathfrak a) = \left\{ \mathfrak b \subset \O_E : N_{E/F}(\mathfrak b) = \mathfrak a \right\}.
$$
For non-zero ideals $\mathfrak a, \mathfrak b \in \O_F$ we define
$$
\sigma(\mathfrak a ,\mathfrak b) = \# \left\{ \mathfrak c \subset \O_F : \mathfrak c | \mathfrak a
+ \mathfrak b\right\}.
$$

When $\Omega$ is unramified we regard its finite part as a character on the group
of fractional ideals of $F$.

\begin{proposition}
Assume that $\Omega$ is unramified, $f_\p=\1_\p$ and $E/F$
is unramified at the even places of $F$. Let $d\in\O_F$ be a generator of $\d_{E/F}$. Then we have
$I(f)$ equal to the sum of
$$
\frac{2^{[F:\Q]+1} L(1, \eta) \sqrt{|\Delta_F|}}{\sqrt{c(\Omega)} \sqrt{|\Delta_E|}} \left(1+\delta
(\Omega^2) \delta(\N)\prod_{v \in \Sigma_\infty}(-1)^{k_v-1}\right)
$$
and
$$
4^{[F:\Q]} \frac{|\Delta_F|}{|\Delta_E|} \sum_{n} |R_E(n\N^{-1})| \sigma(\d_{E/F}, (n+d))
\sum_{\mathfrak a\in R_E((n+d))} \Omega(\mathfrak D_{E/F}^{-1}\mathfrak a)  \prod_{v\in\Sigma_\infty}
P_{k_v, m_v}\left(\iota_v\left(\frac{n}{n+d}\right)\right),
$$
with the outer sum taken over the finite set of $n\in \N$ such that
\begin{enumerate}
  \item $\eta_v\left(1 + \frac dn\right) = 1$ for all $v \mid \d_{E/F}$, and
  \item $\iota_v(n)$ lies between $-\iota_v(d)$ and $0$ for all $v\in\Sigma_\infty$.
\end{enumerate}
\label{propglobalunram}
\end{proposition}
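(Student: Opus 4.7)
The strategy is to start from the decomposition $I(f) = I_{reg}(f) + I_{irreg}(f)$ of \eqref{equation:global}. Under our hypotheses ($\Omega$ unramified and $f_\p = \1_\p$) we have $\mathfrak{c}(\Omega) = \mathfrak I(f_\p) = \O_F$, so Proposition \ref{propirreg} immediately yields the first summand in the statement, since $\I(\1_\p) = 1$ by Lemma \ref{lemIrUn}. The whole content of the proposition is therefore the evaluation of $I_{reg}(f)$ as the second displayed quantity.

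For $I_{reg}(f)$, Lemma \ref{lemglobalreg} reduces the sum to the finite set $S(\Omega,\N,\1_\p)$ of $\xi \in \eps NE^\times$ with $v(\xi)\geq 1$ for all $v\mid\N$ and $(1-\xi)\mid d$ in $\O_F$. I would parametrize this set by setting $n = \xi d/(1-\xi) \in \O_F$, equivalently $\xi = n/(n+d)$. The divisibility $(1-\xi)\mid d$ becomes automatic, the condition $v(\xi)\geq 1$ for $v\mid\N$ translates to $n\in\N$ (using that $\N$ is coprime to $\d_{E/F}$, so $v(d)=0$ for $v\mid\N$), the archimedean condition $\iota_v(\xi)<0$ becomes the stated interval condition on $\iota_v(n)$, and $\xi\in\eps NE^\times$ at finite inert places becomes $\eta_v(1+d/n)=1$ at $v\mid\d_{E/F}$ (the remaining places being split or inert with exponents controlled by the split-place counting).

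Having made the parametrization, $I(\xi,f)$ factors as a product of local integrals which I would evaluate place by place. At $v\mid\N$ Lemma \ref{lemReN} gives $\vol(F_v^\times\bs E_v^\times)^2$. At archimedean $v$ Lemma \ref{lemRe3} gives $\vol(F_v^\times\bs E_v^\times)^2 P_{k_v,m_v}(\iota_v(\xi))$. At finite $v\nmid\N$ inert in $E$ with $n(\Omega_v)=0$, Lemma \ref{lemRe1} contributes the factor $\Omega_v(\varpi_{E_v}^{v_E(1-\xi)/2})$ together with the volume, with the $\tfrac12$ at $v(1-\xi)=v(\d_{E/F})$ in the ramified (odd residue characteristic) case. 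At split $v$, Lemma \ref{lemRe2} gives either $v(\xi)+1$ (when $v\nmid d$) or the cyclotomic-type sum $\Omega_v(\xi,1)\sum_l \Omega_v(\varpi_v^{2l},1)$ (when $v\mid n$). Gathering the volume constants globally produces the prefactor $4^{[F:\Q]}|\Delta_F|/|\Delta_E|$ after using the normalizations of Section on measures together with $\vol(F_v^\times\bs E_v^\times)=2$ at infinity and the standard product formula $\prod_v\vol(U_{F_v}\bs U_{E_v}) = |\Delta_F|^{1/2}|\Delta_E|^{-1/2}L(1,\eta)^{-1}$.

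The main obstacle is recognizing the remaining product of place-by-place $\Omega$-factors and integer multiplicities as the two combinatorial objects $|R_E(n\N^{-1})|$ and $\sigma(\d_{E/F},(n+d))\sum_{\mathfrak a\in R_E((n+d))}\Omega(\mathfrak D_{E/F}^{-1}\mathfrak a)$. The first identification is classical: at each finite $v\nmid\N$ the local multiplicity associated to the ideal $(n)\N^{-1}$ is exactly $v(n\N^{-1})+1$ at split $v$, $1$ or $0$ according to parity at inert $v$ (via the $\eta_v$ condition), and $1$ at ramified $v$, matching respectively Lemmas \ref{lemRe2}, \ref{lemRe1} and the split-even-characteristic hypothesis. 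The second identification comes from the contribution at places dividing $n+d$: the factor $\sigma(\d_{E/F},(n+d))$ encodes the half-integer $\tfrac12$ phenomenon from Lemma \ref{lemRe1} combined with the freedom in choosing $\mathfrak a\in R_E((n+d))$, while $\Omega(\mathfrak D_{E/F}^{-1}\mathfrak a)$ reassembles the $\Omega_v(\varpi_{E_v}^{\bullet})$ and $\Omega_v(\xi,1)$ factors into a single character value on an ideal of $E$, using that $\Omega|_{\A_F^\times}$ is trivial so the class of $\mathfrak a$ modulo norms is well-defined. Once these identifications are verified at each place separately, the product formula over $v$ assembles into the claimed global expression and completes the proof.
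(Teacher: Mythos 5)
Your plan follows exactly the route of the paper's proof: decompose $I(f) = I_{reg}(f) + I_{irreg}(f)$, read the first summand off Proposition \ref{propirreg} (using $\I(\1_\p)=1$, which one needs all three of Lemmas \ref{lemIrUn}, \ref{lemIrRa}, \ref{lemIrSp} for, since $\p$ may be inert, ramified or split), then use Lemma \ref{lemglobalreg} to support $I_{reg}(f)$ on the parametrized family $\xi = n/(n+d)$ with $n\in\N$, evaluate the local orbital integrals via Lemmas \ref{lemReN}, \ref{lemRe1}, \ref{lemRe2}, \ref{lemRe3}, and reassemble the place-by-place multiplicities and $\Omega$-values into $|R_E(n\N^{-1})|$, $\sigma(\d_{E/F},(n+d))$ and $\sum_{\mathfrak a}\Omega(\mathfrak D_{E/F}^{-1}\mathfrak a)$. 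This is precisely what the paper does.

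Two details in your sketch need fixing. First, the product formula you quote for the volume factors has a spurious $L(1,\eta)^{-1}$: from $\prod_{v<\infty}\vol(U_{F_v}) = |\Delta_F|^{-1/2}$ and the analogous statement for $E$ one gets $\prod_{v<\infty}\vol(U_{F_v}\bs U_{E_v}) = |\Delta_F|^{1/2}|\Delta_E|^{-1/2}$, with no $L(1,\eta)^{-1}$; combining with $\vol(F_v^\times\bs E_v^\times)^2 = 4$ at infinity this gives the stated prefactor $4^{[F:\Q]}|\Delta_F|/|\Delta_E|$, whereas your formula would produce an extra $L(1,\eta)^{-2}$ that does not appear. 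Second, your description of where $\sigma(\d_{E/F},(n+d))$ comes from is slightly off: the factor $\tfrac12$ in Lemma \ref{lemRe1} (the case $v(1-\xi)=v(\d_{E/F})>0$, i.e.\ $v(n+d)=0$) actually cancels the index $[E_v^\times : F_v^\times U_{E_v}]=2$ at that place, giving contribution $1$; it is the other case $v(1-\xi)\le 0$ (i.e.\ $v(n+d)\ge 1$) which produces an uncancelled factor of $2$ at each such ramified $v$, and $\sigma(\d_{E/F},(n+d)) = \#\{\c\subset\O_F : \c\mid \gcd(\d_{E/F},(n+d))\}$ records exactly those factors of $2$.
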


\begin{proof}
Let $\xi\in\eps NE^\times$ and let $v$ be a finite place of $F$. When $v$ splits in $E$ we have,
with the notation of Lemma \ref{lemRe2},
\begin{itemize}
  \item $v(1 - \xi) > 0 \Longrightarrow I(\xi, f_v) = 0$,
  \item $v(1 - \xi) = 0 \Longrightarrow I(\xi, f_v) = (1 + v(\xi)) \vol(U_{F_v})^2$,
  \item $v(1 - \xi) < 0 \Longrightarrow I(\xi, f_v) =  \sum_{i=0}^{|v(\xi)|} \Omega(\varpi_v^i,\varpi_v^{|v(\xi)|-i}) \vol(U_{F_v})^2$.
\end{itemize}
When $v \nmid \N$ is inert and unramified in $E$ we have $\Omega_v$ trivial and, by Lemma \ref{lemRe1},
\begin{itemize}
  \item $v(1 - \xi) > 0 \Longrightarrow I(\xi, f_v) = 0$,
  \item $v(1 - \xi) \text{ odd and } \leq 0 \Longrightarrow I(\xi, f_v) = 0$,
  \item $v(1 - \xi) \text{ even and } \leq 0 \Longrightarrow I(\xi, f_v) = \vol(U_{F_v}\bs U_{E_v})^2$.
\end{itemize}
When $v \mid \N$ we have $\Omega_v$ trivial and, by Lemma \ref{lemReN},
\begin{itemize}
  \item $v(\xi) \leq 0 \Longrightarrow I(\xi, f_v) = 0$,
  \item $v(\xi) \geq 1 \Longrightarrow I(\xi, f_v) = \vol(U_{F_v}\bs U_{E_v})^2$.
\end{itemize}
When $v$ is odd and ramified in $E$ we have $\Omega_v^2$ trivial and, by Lemma \ref{lemRe1},
\begin{itemize}
  \item $v(1 - \xi) > 1 \Longrightarrow I(\xi, f_v) = 0$,
  \item $v(1 - \xi) = 1 \Longrightarrow I(\xi, f_v) = \vol(U_{F_v}\bs U_{E_v})^2 \Omega(\varpi_{E_v})^{-1}$,
  \item $v(1 - \xi) \leq 0 \Longrightarrow I(\xi, f_v) = 2 \Omega(\varpi_{E_v})^{\frac{v_E(1-\xi)}{2}}\vol(U_{F_v}\bs U_{E_v})^2$.
\end{itemize}
And at $v\in\Sigma_\infty$ we have, by Lemma \ref{lemRe3},
$$
I(\xi, f_v) = \vol(F_v^\times\bs E^\times_v)^2 P_{m_v, k_v}(\iota_v(\xi)).
$$

As in the proof of Lemma \ref{lemglobalreg} we note that if $I(\xi, f) \neq 0$ then we have
$$
\xi = \frac{n}{n + d}
$$
with $n\in\N$. Furthermore such an element lies in $\eps NE^\times$ if and only if
\begin{itemize}
  \item $R_E(n\N^{-1})$ and $R_E((n+d))$ are non-empty,
  \item $\eta_v\left(1 + \frac dn\right) = 1$ for all $v \mid \d_{E/F}$, and
  \item $\iota_v(n)$ lies between $\iota_v(0)$ and $-\iota_v(d)$ for all $v\in\Sigma_\infty$.
\end{itemize}
Suppose now we fix such an $n$. We have
$$
I(\xi, f) = \prod_v I(\xi, f_v),
$$
and we now determine the contribution to this product from each place $v$ depending on its behavior
in the extension $E$.

For a finite place $v$ of $F$ and an ideal $\a \subset \O_F$ we define
$$
R_{E_v}(\a) = \left\{ \mathfrak b \subset \O_{E_v} : N_{E_v/F_v}\mathfrak b = \a \O_{F_v} \right\}.
$$
The contribution from the places $v \mid \N$ is,
$$
\prod_{v | \N} \vol(U_{F_v}\bs U_{E_v})^2 |R_{E_v}(n\N^{-1})|,
$$
and furthermore for such $v$ we have $R_{E_v}((n+d)) = \{\O_{E_v}\}$. For finite $v \nmid \N$ which
are inert in $E$ we get
$$
\prod_{v \nmid \N, inert} \vol(U_{F_v}\bs U_{E_v})^2 |R_{E_v}((n))| |R_{E_v}((n+d))|.
$$
For finite $v$ which split in $E$ we get
$$
\prod_{v < \infty, split} \vol(U_{F_v}\bs U_{E_v})^2 |R_{E_v}((n))| \sum_{\a\in R_{E_v}((n+d))}
\Omega_v(\a),
$$
since for such $v$ when $v(1-\xi) = 0$ we have $v(\xi) = v(n)$ and $v(n+d) = 0$, and when $v(1-\xi)
< 0$ we have $v(n) = 0$ and $|v(\xi)| = v(n+d)$. For $v$ ramified in $E$ the contribution to
$I(\xi, f)$ is
$$
\sigma(\d_{E/F}, (n+d)) \prod_{v | \d_{E/F}} \vol(U_{F_v}\bs U_{E_v})^2 |R_{E_v}((n))| \sum_{\a\in
R_{E_v}((n+d))} \Omega_v(\mathfrak D_{E/F}^{-1} \a),
$$
since for such $v$, $\Omega_v^2$ is trivial and $v(1-\xi) = v(\d_{E/F}) - v(n+d)$. Finally for
$v\in\Sigma_\infty$ we have $I(\xi, f_v) = 4 P_{k_v, m_v}(\iota_v(n/(n+d)))$.

Putting these local calculations together gives the sum in the statement of the Proposition and
combining it with the calculation of the irregular terms from Proposition \ref{propirreg} gives the
result.
\end{proof}

\section{A measure on the Hecke algebra}

The goal of this section is to find the measure $\mu$ such that  for $f_\p \in \mathcal H(G_\p,
Z_\p R_\p^\times)$,
\[
\I(f_\p)=\int_{-2}^{2}\hat f_\p(x)\mu,
\]
where $\I(f_\p)$ is defined in Section \ref{irregularcosets}. We begin by recalling standard facts
about distributions on Hecke algebras which come from the Plancherel formula; alternatively see
\cite[Section 2]{serre:1997}.

\subsection{An application of the Plancherel formula}

We assume throughout this subsection that $F$ is a non-archimedean local field of characteristic
zero. We let $q$ denote the order of the residue field of $F$. We let $G' = \PGL(2, F)$ and $K =
\PGL(2, \O_{F})$. We fix a Haar measure on $G'$ which gives $K$ volume one. For $n\geq 0$ we denote
by $f_n$ the characteristic function of
$$
K \bmx \varpi^n&0\\0&1 \emx K.
$$

For $s\in i\R$ let $\pi_s$ denote the unramified principal series representation of $G'$ unitarily
induced from
$$
\bmx a&0\\0&b \emx \mapsto \left| \frac{a}{b} \right|^s.
$$
For $f\in\H(G', K)$ and $x \in [-2, +2]$ we define
$$
\hat{f}(x) = \Tr \pi_s(f),
$$
where $x = q^{s} + q^{-s}$. We have (see \cite[Lemma 9]{ramakrishnan:2005}) $\hat{f}_0 \equiv 1$
and for $n > 0$,
\begin{equation}
\hat{f}_n(x) = q^{\frac n2} \left( q^{ns} + q^{-ns} + (1 - q^{-1})(q^{(n-2)s} + q^{(n-4)s} + \hdots
+ q^{-(n-2)s})\right). \label{fnhat}
\end{equation}

On the interval $[-2, +2]$ we take the Sato-Tate measure
$$
\mu_\infty = \frac{\sqrt{4-x^2}}{2\pi} \ dx
$$
and the spherical Plancherel measure on $\PGL(2, F)$,
$$
\mu_q = \frac{q + 1}{(q^{\frac 12} + q^{-\frac 12})^2 - x^2} \mu_\infty.
$$

By the Plancherel formula we have,

\begin{lemma}
For all $f\in \H(G', K)$ we have
$$
f \bmx 1&0\\0&1 \emx = \int_{-2}^{2} \hat{f}(x) \ \mu_q,
$$
and
$$
f \bmx \varpi^m&0\\0&1 \emx = \frac{1}{(1 + q^{-1}) q^{m}} \int_{-2}^{2} \hat{f}(x) \hat{f}_m(x) \
\mu_q,
$$
for $m > 0$.
\label{lemmeasures}
\end{lemma}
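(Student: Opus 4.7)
The plan is to derive both identities from the spherical Plancherel formula for $\PGL(2, F)$, which asserts that for $f \in \H(G', K)$ and $g \in G'$,
$$
f(g) = \int_{-2}^{2} \varphi_x(g) \hat{f}(x) \, \mu_q,
$$
where $\varphi_x$ is the zonal spherical function attached to $\pi_s$ (with $x = q^s + q^{-s}$) normalized by $\varphi_x(1) = 1$, and $\mu_q$ is the spherical Plancherel measure. This is classical (see Macdonald's book on spherical functions), and the normalization of $\mu_q$ given in the paper can be checked by evaluating at $f = f_0 = \1_K$, where $\hat{f}_0 \equiv 1$ and $f_0(1) = 1$, forcing $\int_{-2}^{2} \mu_q = 1$ (a direct computation via $x = 2\cos\theta$).

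Specializing the Plancherel formula to $g = 1$ immediately yields the first identity, so no further work is needed there.

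For the second identity I would avoid computing $\varphi_x(g_m)$ directly (where $g_m = \diag(\varpi^m, 1)$) by using a convolution trick. For any $f \in \H(G', K)$,
$$
(f * f_m)(1) = \int_{G'} f(h) f_m(h^{-1}) \, dh = \vol(K g_m^{-1} K)\, f(g_m^{-1}),
$$
since $f$ is $K$-biinvariant and $f_m$ is the characteristic function of $K g_m K$. In $\PGL(2, F)$ the Weyl element $w = \bigl(\begin{smallmatrix}0&1\\1&0\end{smallmatrix}\bigr)$ lies in $K$ and conjugates $g_m$ to $\diag(1, \varpi^m)$, which equals $g_m^{-1}$ in $\PGL_2$; hence $K g_m K = K g_m^{-1} K$ and $f(g_m) = f(g_m^{-1})$. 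The standard coset decomposition gives $\vol(K g_m K) = q^m + q^{m-1} = (1 + q^{-1}) q^m$ for $m \geq 1$. On the spectral side, since $\pi_s(f * f_m) = \pi_s(f) \pi_s(f_m)$ and both operators act by scalars on the one-dimensional space of $K$-fixed vectors, we have $\widehat{f * f_m}(x) = \hat{f}(x)\hat{f}_m(x)$.

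Applying the Plancherel formula to $f * f_m$ at the identity therefore gives
$$
(1 + q^{-1}) q^m f(g_m) = (f * f_m)(1) = \int_{-2}^{2} \hat{f}(x) \hat{f}_m(x) \, \mu_q,
$$
which is the desired second identity. The main technical ingredient is the Plancherel formula together with the identification of its measure with $\mu_q$; the rest is bookkeeping with volumes and a use of the Weyl element inside $K$.
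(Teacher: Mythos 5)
Your proof is correct and takes the same route the paper gestures at, since the paper offers no details beyond the single phrase ``By the Plancherel formula.'' Your derivation fills in exactly those details: the first identity is the Plancherel inversion formula at the identity, and your convolution trick for the second identity is a tidy way to avoid quoting Macdonald's explicit formula for the zonal spherical function at $\diag(\varpi^m,1)$. Concretely, one could equally well write $\hat{f}_m(x) = \vol(Kg_mK)\,\varphi_x(g_m) = (1+q^{-1})q^m\varphi_x(g_m)$ and then apply the inversion formula $f(g_m) = \int \hat{f}(x)\varphi_x(g_m)\,\mu_q$ at $g = g_m$; your approach replaces the evaluation of $\varphi_x(g_m)$ with the observation that $\pi_s$ sends convolution to operator product, which yields $\widehat{f*f_m} = \hat f\hat f_m$, and then uses the inversion formula only at the identity. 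Both are instances of the same Plancherel theorem. The volume computation $\vol(Kg_mK) = q^{m-1}(q+1)$ for $m\ge 1$, the symmetry $Kg_mK = Kg_m^{-1}K$ via the Weyl element in $K$, and the normalization check $\int_{-2}^2\mu_q = 1$ (which indeed follows from $\hat f_0\equiv 1$ and a contour/residue or $x=2\cos\theta$ computation) are all correct.
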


In particular we note the following corollary.

\begin{corollary}
We have
$$
f \bmx 1&0\\0&1 \emx + \delta f \bmx \varpi&0\\0&1 \emx = \frac{1}{q + 1} \int_{-2}^{2} \hat{f}(x)
(1 + q^{\frac 12}\delta x + q) \ \mu_q,
$$
and
$$
f \bmx 1&0\\0&1 \emx - f \bmx \varpi^2&0\\0&1 \emx = \int_{-2}^{2} \hat{f}(x) \ \mu_\infty.
$$
\label{cormeasures}
\end{corollary}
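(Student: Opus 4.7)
Both identities are direct consequences of Lemma \ref{lemmeasures} combined with the explicit formula \eqref{fnhat} for $\hat{f}_n(x)$, so the plan is purely computational: evaluate $\hat{f}_1$ and $\hat{f}_2$ explicitly, substitute, and simplify.

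For the first identity, I would set $n=1$ in \eqref{fnhat}. The inner sum is empty, so $\hat{f}_1(x) = q^{1/2}(q^s + q^{-s}) = q^{1/2} x$. Plugging this into the second formula of Lemma \ref{lemmeasures} gives
\[
f \bmx \varpi&0\\0&1 \emx = \frac{q^{1/2}}{q+1} \int_{-2}^{2} \hat{f}(x)\, x\, \mu_q,
\]
using $(1+q^{-1})q = q+1$. Adding $\delta$ times this to $f(\mathrm{id}) = \frac{1}{q+1}\int \hat{f}(x)(q+1)\,\mu_q$ yields the first identity, since $q+1 = 1 + q$.

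For the second identity, the key observation is that when $n=2$ the bracketed expression in \eqref{fnhat} becomes $q^{2s}+q^{-2s}+(1-q^{-1})$, and since $x = q^s + q^{-s}$ one has $q^{2s}+q^{-2s} = x^2 - 2$. Hence $\hat{f}_2(x) = q(x^2 - 1 - q^{-1}) = qx^2 - q - 1$. Substituting into Lemma \ref{lemmeasures} and combining over a common denominator,
\[
f(\mathrm{id}) - f \bmx \varpi^2&0\\0&1 \emx = \int_{-2}^{2} \hat{f}(x)\, \frac{(q+1)^2 - qx^2}{q(q+1)}\, \mu_q.
\]
The main (and only) step is to observe that this weight is precisely the reciprocal density converting $\mu_q$ back to $\mu_\infty$: by definition of $\mu_q$ and the identity $(q^{1/2}+q^{-1/2})^2 = (q+1)^2/q$, one has $\mu_q = \frac{q(q+1)}{(q+1)^2 - qx^2}\,\mu_\infty$, so the two factors cancel and the integral collapses to $\int \hat{f}(x)\,\mu_\infty$. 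There is no genuine obstacle; the only thing to watch for is the arithmetic identity $(q^{1/2}+q^{-1/2})^2 = (q+1)^2/q$, which makes the cancellation exact.
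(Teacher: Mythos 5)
Your proof is correct, and since the paper states this corollary without proof (it just notes it as a consequence of Lemma \ref{lemmeasures}), your direct computation is exactly the implicit argument: compute $\hat{f}_1(x) = q^{1/2}x$ and $\hat{f}_2(x) = qx^2 - q - 1$ from \eqref{fnhat}, substitute into Lemma \ref{lemmeasures}, and for the second identity observe that $(q^{1/2}+q^{-1/2})^2 = (q+1)^2/q$ so that $\mu_q = \frac{q(q+1)}{(q+1)^2 - qx^2}\mu_\infty$, which cancels the weight $\frac{(q+1)^2-qx^2}{q(q+1)}$ exactly. No gaps.
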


We define for $\delta\in\C$ a distribution
$$
\Lambda_\delta : f \mapsto \sum_{n\in\Z} \delta^n f \bmx \varpi^n&0\\0&1 \emx
$$
on $\H(G', K)$. Then we have the following.

\begin{lemma}
For $f\in\H(G', K)$ and $\delta$ with $|\delta| = 1$,
$$
\Lambda_\delta(f) = \int_{-2}^{2} \hat{f}(x) \frac{(1 - q^{-1})}{(1 - \delta q^{-\frac 12} x +
\delta^2 q^{- 1}) (1 - \delta^{-1} q^{-\frac 12} x + \delta^{-2} q^{- 1})} \ \mu_\infty.
$$
\label{lemmeasures1}
\end{lemma}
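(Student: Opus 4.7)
The plan is to combine the $\PGL(2)$-symmetry of the diagonal double cosets with Lemma \ref{lemmeasures}, then evaluate four geometric series explicitly.

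First I would observe that on $\PGL(2, F)$ the double cosets satisfy
$K \bmx \varpi^n&0\\0&1 \emx K = K \bmx \varpi^{-n}&0\\0&1 \emx K$
(conjugate by $\bmx 0&1\\1&0 \emx \in K$ and divide by the scalar $\varpi^n$). Thus for bi-$K$-invariant $f$ we have $f\bmx \varpi^n&0\\0&1\emx = f\bmx \varpi^{|n|}&0\\0&1\emx$, and
\[
\Lambda_\delta(f) = f\bmx 1&0\\0&1\emx + \sum_{n\geq 1}(\delta^n + \delta^{-n}) f\bmx \varpi^n&0\\0&1\emx.
\]
Applying Lemma \ref{lemmeasures} term-by-term (the convergence follows from the compact support of $f$, which forces only finitely many terms to be nonzero) produces
\[
\Lambda_\delta(f) = \int_{-2}^{2} \hat f(x)\, K(x)\,\mu_q, \qquad K(x) = 1 + \frac{1}{1+q^{-1}} \sum_{n\geq 1} \frac{\delta^n + \delta^{-n}}{q^n}\, \hat f_n(x).
\]

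Next, parametrize $x = \alpha + \alpha^{-1}$ with $|\alpha| = 1$ and substitute formula \eqref{fnhat}, rewriting the symmetric partial sum as
\[
\hat f_n(x) = q^{n/2}(\alpha^n + \alpha^{-n}) + q^{n/2}(1-q^{-1})\,\frac{\alpha^{n-1} - \alpha^{-(n-1)}}{\alpha - \alpha^{-1}}.
\]
The sum over $n \geq 1$ then splits into four geometric series with ratios $\delta^{\pm 1} q^{-1/2} \alpha^{\pm 1}$, each of modulus $q^{-1/2} < 1$, hence summable in closed form. Collecting terms over the common denominator
\[
\prod_{\epsilon_1, \epsilon_2 \in \{\pm 1\}} \bigl(1 - \delta^{\epsilon_1} q^{-1/2} \alpha^{\epsilon_2}\bigr)
\]
and applying the factorizations
\[
1 - \delta^{\pm 1} q^{-1/2} x + \delta^{\pm 2} q^{-1} = (1 - \delta^{\pm 1} q^{-1/2} \alpha)(1 - \delta^{\pm 1} q^{-1/2} \alpha^{-1}),
\]
\[
(q^{1/2} + q^{-1/2})^2 - x^2 = (q - \alpha^2)(1 - q^{-1} \alpha^{-2}),
\]
I expect the numerator to collapse to $(1-q^{-1})\bigl((q^{1/2}+q^{-1/2})^2 - x^2\bigr)/(q+1)$, so that
\[
K(x) = \frac{(1-q^{-1})\bigl((q^{1/2}+q^{-1/2})^2 - x^2\bigr)}{(q+1)\,(1 - \delta q^{-1/2} x + \delta^2 q^{-1})(1 - \delta^{-1} q^{-1/2} x + \delta^{-2} q^{-1})}.
\]
Finally, substituting $\mu_q = \frac{q+1}{(q^{1/2}+q^{-1/2})^2 - x^2}\,\mu_\infty$ cancels the first factor and yields the stated formula.

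The main obstacle is the algebraic manipulation in the simplification step: one must carefully bring the four geometric sums over a common denominator and verify that the numerator collapses to the claimed symmetric polynomial in $x$. One could short-circuit this by instead using Macdonald's closed formula for the spherical function $\omega_s$ on $\PGL(2, F)$, which gives $\hat f_n(x)$ as a symmetric rational function in $\alpha$ directly and reduces the series evaluation to a single geometric sum. With either approach, convergence is automatic from $|\delta| = 1$ and $|\alpha| = 1$.
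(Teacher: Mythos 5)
Your proposal is correct and follows essentially the same route as the paper's own proof, which simply cites the symmetry $f\bmx \varpi^m&0\\0&1 \emx = f\bmx \varpi^{-m}&0\\0&1 \emx$, Lemma \ref{lemmeasures}, and Formula \ref{fnhat}, and refers to the resulting geometric-series evaluation as a ``short calculation.'' Your write-up just carries out that calculation explicitly.
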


\begin{proof}
This follows by a short calculation using Lemma \ref{lemmeasures}, Formula \ref{fnhat} and that,
$$
f \bmx \varpi^m&\\&1 \emx = f \bmx \varpi^{-m}&\\&1 \emx.
$$
Alternatively, one can argue as in \cite[Section 6]{ramakrishnan:2005} in the case $\delta = 1$.
\end{proof}

\subsection{The distribution $\tilde{I}$}
\label{distributionI}

For $\alpha, \beta \in \C$ we let $\rho(\alpha, \beta)$ denote the unramified representation of
$\GL(2, F_\p)$ with Satake parameters $\{ \alpha, \beta\}$. For $x\in [-2, +2]$ we define
$\alpha_x\in\C$ to be such that $\alpha_x + \alpha_x^{-1} = x$; of course $\alpha_x$ is not well defined, however, all constructions below will depend only on $x$, and not on the choice of $\alpha_x$. We then define
$$
\mu_{\p, E, \Omega} = L(1/2, \rho(\alpha_x,\alpha_x^{-1})_{E_\p}\otimes\Omega_\p)  \frac{\sqrt{4 -
x^2}}{2\pi} \ dx,
$$
where $\rho(\alpha_x,\alpha_x^{-1})_{E_\p}$ denotes the base change of
$\rho(\alpha_x,\alpha_x^{-1})$ to $\GL(2, E_\p)$. We note that,
\begin{align*}
& L(1/2, \rho(\alpha_x,\alpha_x^{-1})_{E_\p}\otimes\Omega_\p) = 1, \text{ when $\Omega_\p$ is
ramified and},\\
& L(1/2, \rho(\alpha_x,\alpha_x^{-1})_{E_\p}\otimes\Omega_\p) =\\
& \left\{
  \begin{array}{ll}
    (1 - x\Omega(\varpi_{E_\p})q_\p^{-\frac 12} + \Omega(\varpi_{E_\p})^2 q_\p^{-1})^{-1} (1 - \frac{x}{\Omega(\varpi_{E_\p})} q_\p^{-\frac 12} + \frac{1}{\Omega(\varpi_{E_\p})^2} q_\p^{-1})^{-1}, & \hbox{for $\p$ split,} \\
    ((1 + q_\p^{-1})^2 - x^2 q_\p^{-1})^{-1}, & \hbox{for $\p$ inert,} \\
    (1 - x \Omega(\varpi_{E_\p}) q_\p^{-\frac 12} + q_\p^{-1})^{-1}, & \hbox{for $\p$ ramified,}
  \end{array}
\right.
\end{align*}
when $\Omega_\p$ is unramified, where $\varpi_{E_\p}$ is defined in Section \ref{geometric}. We
note that
$$
\int_{-2}^2 \mu_{\p, E, \Omega} =
\left\{
  \begin{array}{ll}
    L(1, \eta_\p), & \hbox{if $\Omega$ is unramified at $\p$;} \\
    1, & \hbox{otherwise.}
  \end{array}
\right.
$$
We note for future reference the relationship between $\mu_{\p, E, \Omega}$ and the spherical
Plancherel measure $\mu_\p$ on $\PGL(2, F_\p)$ is given by
$$
\mu_{\p, E, \Omega} = \frac{L(1/2, \rho(\alpha_x, \alpha_x^{-1})_{E_\p}\otimes\Omega_\p)}{L(1,
\rho(\alpha_x, \alpha_x^{-1}), Ad)} L(2, 1_{F_\p}) \mu_\p.
$$

We recall $\I(f_\p)$ is defined in Section \ref{irregularcosets}.

\begin{lemma}
For all $f_\p \in \H(G(F_\p), Z_\p R_\p^\times)$ we have
$$
\I(f_\p) = \frac{\int_{-2}^2 \hat{f}_\p(x) \ \mu_{\p, E, \Omega}}{L(1, \eta_\p)},
$$
if $\Omega$ is unramified at $\p$ and we have
$$
\I(f_\p) = \int_{-2}^2 \hat{f}_\p(x) \ \mu_{\p, E, \Omega},
$$
if $\Omega$ is ramified at $\p$.
\label{lemItilde}
\end{lemma}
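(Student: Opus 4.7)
The plan is to verify the formula case by case according to the splitting behavior of $\p$ in $E$ and the ramification of $\Omega_\p$, in each case using the explicit evaluation of $\I(f_\p)$ from Section~\ref{irregularcosets} and identifying the resulting combination of values $f_\p(\bmx \varpi^m&0\\0&1 \emx)$ with an integral against the spherical Plancherel measure via Lemma~\ref{lemmeasures}, Corollary~\ref{cormeasures} or Lemma~\ref{lemmeasures1}.

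First I would dispose of the case when $\Omega$ is ramified at $\p$. Here Lemmas~\ref{lemIrUn}, \ref{lemIrRa} and \ref{lemIrSp} all give the uniform answer $\I(f_\p)=f_\p(1)-f_\p(1+\tau_\p\varpi_\p^{n(\Omega_\p)-1})$. The element $1+\tau_\p\varpi_\p^{n(\Omega_\p)-1}$, viewed through the embedding $E_\p\hookrightarrow M(2,F_\p)$ of Section~\ref{geometric} and classified by Lemma~\ref{lemdoublecoset}, sits in the double coset $F_\p^\times K\bmx\varpi_\p^2&0\\0&1\emx K$; hence by the second identity of Corollary~\ref{cormeasures}, $\I(f_\p)=\int_{-2}^2\hat{f}_\p(x)\,\mu_\infty$. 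Since $\Omega_\p$ is ramified the local $L$-factor in the definition of $\mu_{\p,E,\Omega}$ is $1$, so $\mu_{\p,E,\Omega}=\mu_\infty$ and the formula follows.

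Next, for $\Omega$ unramified at $\p$ I would handle the three splitting behaviors of $\p$. If $\p$ is inert then $\I(f_\p)=f_\p(1)$, which by Lemma~\ref{lemmeasures} equals $\int\hat{f}_\p\,\mu_{q_\p}$; a short algebraic manipulation ($(q_\p^{1/2}+q_\p^{-1/2})^2-x^2=q_\p\bigl((1+q_\p^{-1})^2-x^2q_\p^{-1}\bigr)$) shows $\mu_{q_\p}=(1+q_\p^{-1})\mu_{\p,E,\Omega}$, which is exactly $\mu_{\p,E,\Omega}/L(1,\eta_\p)$ since $L(1,\eta_\p)=(1+q_\p^{-1})^{-1}$. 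If $\p$ is ramified then $\I(f_\p)=f_\p(1)+\Omega_\p(\tau_\p)f_\p(\tau_\p)$; one checks that $\tau_\p=\sqrt{\varpi_\p}$ maps under the embedding to a matrix lying in the double coset of $\bmx\varpi_\p&0\\0&1\emx$ (since $\bmx 0&1\\1&0\emx\in K$), and that $\delta:=\Omega_\p(\tau_\p)\in\{\pm1\}$ because $\Omega|_{F^\times}=1$ and $\tau_\p^2$ differs from a uniformizer of $F_\p$ by a unit. Corollary~\ref{cormeasures} then yields $\I(f_\p)=\frac{1}{q_\p+1}\int\hat{f}_\p(x)(1+q_\p^{1/2}\delta x+q_\p)\mu_{q_\p}$, and the identity
\[
(1-\delta x q_\p^{-1/2}+q_\p^{-1})(1+q_\p^{1/2}\delta x+q_\p)=(q_\p^{1/2}+q_\p^{-1/2})^2-x^2
\]
converts this into $\int\hat{f}_\p\,\mu_{\p,E,\Omega}$; noting $L(1,\eta_\p)=1$ at ramified $\p$ completes this subcase.

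Finally, for the split case with $\Omega$ unramified, $\I(f_\p)=\sum_{\alpha\in E_\p^\times/F_\p^\times\O_{E_\p}^\times}\Omega_\p(\alpha)f_\p(\alpha)$, and writing $\alpha=\varpi_{E_\p}^n$ identifies this with $\Lambda_\delta(f_\p)$ for $\delta=\Omega_\p(\varpi_{E_\p})$ (a point on the unit circle by unitarity). Lemma~\ref{lemmeasures1} then gives
\[
\I(f_\p)=\int_{-2}^2\hat{f}_\p(x)\,\frac{(1-q_\p^{-1})}{(1-\delta q_\p^{-1/2}x+\delta^2 q_\p^{-1})(1-\delta^{-1}q_\p^{-1/2}x+\delta^{-2}q_\p^{-1})}\,\mu_\infty,
\]
whose integrand is precisely $(1-q_\p^{-1})L(1/2,\rho(\alpha_x,\alpha_x^{-1})_{E_\p}\otimes\Omega_\p)$, equal to $\mu_{\p,E,\Omega}/L(1,\eta_\p)$ since $L(1,\eta_\p)=(1-q_\p^{-1})^{-1}$ at split $\p$. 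The main bookkeeping obstacle is the split-unramified case, where one must match an infinite geometric-series summation against a rational function; this is cleanly handled by Lemma~\ref{lemmeasures1}, so all other cases reduce to the verification of short polynomial identities.
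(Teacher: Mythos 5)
Your proof is correct and follows essentially the same case-by-case strategy as the paper's own proof of Lemma~\ref{lemItilde}: dispose of the ramified-$\Omega$ case uniformly via Corollary~\ref{cormeasures}, then treat the inert/ramified/split subcases with $\Omega$ unramified using Lemmas~\ref{lemIrUn}, \ref{lemIrRa}, \ref{lemIrSp}, the double-coset classification of Lemma~\ref{lemdoublecoset}, and the Plancherel identities of Lemma~\ref{lemmeasures}, Corollary~\ref{cormeasures} and Lemma~\ref{lemmeasures1}. You are actually a bit more careful than the published proof, which elides the $L(1,\eta_\p)^{-1}$ normalization in the displayed equations of the inert and split subcases (the bookkeeping you do with $\mu_{q_\p}=(1+q_\p^{-1})\mu_{\p,E,\Omega}$, $L(1,\eta_\p)=(1-q_\p^{-1})^{-1}$, etc., is exactly what makes the stated formula come out).
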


\begin{proof}
We fix an isomorphism $D_\p \iso M(2, F_\p)$ and an embedding $E_\p \hookrightarrow M(2, F_\p)$
such that $M(2, \O_{F_\p}) \cap E = \O_F + \varpi_\p^{n(\Omega_\p)} \O_{E_\p}$ as in the proof of
Lemma \ref{lemdoublecoset}. We now prove the lemma on a case by case basis.

First we assume that $n(\Omega_\p) > 0$. Then we have, by Lemmas \ref{lemIrUn}, \ref{lemIrRa}, \ref{lemIrSp} and Corollary
\ref{cormeasures},
$$
\tilde{I}(f_\p) = f_\p \bmx 1&0\\0&1 \emx - f_\p \bmx \varpi_\p^2&0\\0&1 \emx = \int_{-2}^{2}
\hat{f}_\p(x) \mu_\infty.
$$
On the other hand in this case we clearly have $\mu_{\p, E, \Omega} = \mu_\infty$.

Next we assume that $\p$ is unramified and inert in $E$ and $\Omega$ is unramified at $\p$. Then
from Lemmas \ref{lemIrUn} and \ref{lemmeasures} we have
\begin{align*}
\tilde{I}(f_\p) = \int_{-2}^{2} \hat{f}_\p(x) \mu_{q_\p} = \int_{-2}^{2} \hat{f}_\p(x) \mu_{\p, E,
\Omega}.
\end{align*}

Next we assume that $\p$ splits in $E$ and $\Omega$ is unramified above $\p$. We write $\Omega_\p =
(\chi, \chi^{-1})$. Then from Lemmas \ref{lemIrSp} and \ref{lemmeasures1} we have
\begin{align*}
\tilde{I}(f_\p) = \sum_{m\in\Z} \chi(\varpi_\p^m) f_\p \bmx \varpi_\p^m&0\\0&1 \emx = \int_{-2}^2
\hat{f}_\p(x) \mu_{\p, E, \Omega}.
\end{align*}

Finally we assume that $\p$ is ramified in $E$ and $\Omega$ is unramified at $\p$. Then
from Lemma \ref{lemIrRa} and Corollary \ref{cormeasures} we have
\begin{align*}
\tilde{I}(f_\p) = f \bmx 1&0\\0&1 \emx + \Omega(\tau) f\bmx \varpi_\p&0\\0&1 \emx = \int_{-2}^{2}
\hat{f}_\p(x) \mu_{\p, E, \Omega}.
\end{align*}
\end{proof}

\section{Main results}
\label{mainresults}

We now combine the calculations of Sections \ref{spectral} and \ref{geometric} to obtain the main results of this paper.

\subsection{Average $L$-values}

By Propositions \ref{propspectral}, \ref{propirreg} and Lemma \ref{lemglobalreg} we see that we have an exact formula for
\[
\sum_{\pi\in\F(\N, 2\k)}
\frac{L(1/2, \pi_E\otimes\Omega)}{L(1, \pi, Ad)} \hat{f}_\p(\pi_\p),
\]
in terms of orbital integrals. We now write down the formula precisely under certain further
assumptions for which we have computed all the necessary orbital integrals.

Combining Proposition \ref{propspectral}, Corollary \ref{corgeomNlarge} and Lemma \ref{lemItilde} we get the following.

\begin{theorem} \label{thmlargelevel}
Let $E$ be a CM extension of a totally real number field $F$. Let $\N \subset \O_F$ be an ideal such that each prime dividing $\N$ is unramified and inert in $E$ and that the number of primes dividing $\N$ has the same parity as $[F:\Q]$. Let $\Omega : \A_F^\times E^\times \bs \A_E^\times\to \C^\times$ be a character which is unramified outside of $\N$ and such that for $v\in\Sigma_\infty$ the weight $m_v$ of $\Omega_v$ is strictly less than $k_v$. Let $f_\p\in\H(G(F_\p), Z_\p R^\times_\p)$. Then for
$|\N| \geq d_{E/F} (c(\Omega) |\mathfrak I(f_\p)|)^{h_F}$,
$$
\frac{2^{[F:\Q]}}{|\N|} \binom{2\k - 2}{\k + \m - 1} \sum_{\pi\in\F(\N, 2\k)}
\frac{L(1/2, \pi_E\otimes\Omega)}{L(1, \pi, Ad)} \hat{f}_\p(\pi_\p),
$$
is equal to
\begin{multline*}
{4 {|\Delta_F|}^{\frac{3}{2}}L^{S(\Omega)\cup\{\p\}}(1, \eta)} \left( 1 + \delta(\Omega^2) \delta(\N)
\prod_{v\in\Sigma_\infty} (-1)^{k_v-1} \right) \int_{-2}^{2} \hat{f}_\p(x)
\mu_{\p, E, \Omega}\\
- 4 C(\k, \Omega, f_\p) L^{S(\Omega)}(1, \eta)^2 \frac{\sqrt{c(\Omega) |\Delta_E|}}{L(2,1_F)
\sqrt{|\Delta_F|}} \prod_{v \mid \N}\frac{1}{q_v-1}\prod_{v\in\Sigma_\infty} \frac{2 \pi}{2k_v-1},
\end{multline*}
where $\mathfrak I(f_\p)$ is defined before Lemma 4.21, $C(\k, \Omega, f_\p)$ is defined in Lemma \ref{lemcont} and the measure $\mu_{\p, E,
\Omega}$ is defined in Section \ref{distributionI}.
\end{theorem}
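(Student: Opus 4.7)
The plan is to equate the spectral and geometric expansions of $I(f)$ under the hypothesis $|\N| \geq d_{E/F}(c(\Omega)|\mathfrak I(f_\p)|)^{h_F}$ and then solve for the $L$-value sum.

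First, I would invoke Proposition \ref{propspectral}, which already gives $I(f)$ as the sum of two pieces: one piece proportional to
\[
\frac{1}{|\N|}\binom{2\k-2}{\k+\m-1}\sum_{\pi\in\F(\N, 2\k)}\frac{L(1/2,\pi_E\otimes\Omega)}{L(1,\pi,Ad)}\hat{f}_\p(\pi_\p),
\]
with an explicit constant involving $L_{S(\Omega)}(1,\eta)^2$, $\sqrt{c(\Omega)d_{E/F}}$, $|\Delta_F|$, and powers of $q_v-1$ for $v\mid\N$ and $(2k_v-1)$ for $v\in\Sigma_\infty$, and a second piece involving $C(\k,\Omega,f_\p)$ which encodes the residual contribution. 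Next I would appeal to Corollary \ref{corgeomNlarge}, which, under the level hypothesis, collapses the geometric side down to just the irregular contribution, producing
\[
I(f) = \frac{2^{[F:\Q]+1}L(1,\eta)L_{S(\Omega)}(1,\eta)\sqrt{|\Delta_F|}}{\sqrt{c(\Omega)|\Delta_E|}}\left(1+\delta(\Omega^2)\delta(\N)\prod_{v\in\Sigma_\infty}(-1)^{k_v-1}\right)\tilde I(f_\p).
\]

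Then I would apply Lemma \ref{lemItilde} to rewrite $\tilde I(f_\p)$ in terms of $\int_{-2}^2 \hat f_\p(x)\,\mu_{\p,E,\Omega}$. Here one has to be slightly careful: when $\Omega$ is unramified at $\p$, there is an extra factor of $L(1,\eta_\p)^{-1}$. This factor combines with $L^{S(\Omega)}(1,\eta)$ appearing on the geometric side to produce $L^{S(\Omega)\cup\{\p\}}(1,\eta)$ in both the ramified and unramified case, uniformly. Meanwhile $L(1,\eta)$ appearing from $\vol(\A_F^\times E^\times\bs \A_E^\times)=2L(1,\eta)$ combines similarly.

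Finally I would solve for the $L$-value sum: subtract the residual piece $C(\k,\Omega,f_\p)$-term of Proposition \ref{propspectral} from $I(f)$, divide through by the constant $\frac{L_{S(\Omega)}(1,\eta)^2}{2|\Delta_F|^2\sqrt{c(\Omega)d_{E/F}}}\cdot\frac{4^{[F:\Q]}}{|\N|}\binom{2\k-2}{\k+\m-1}$, and simplify. The main bookkeeping obstacle will be tracking all the constants: the powers of $|\Delta_F|$, $|\Delta_E|=|\Delta_F|^2 d_{E/F}$, $c(\Omega)$, the factor $L(2,1_F)$, and the archimedean factor $\prod_v 2\pi/(2k_v-1)$ must cancel correctly to yield the clean prefactor $4|\Delta_F|^{3/2}L^{S(\Omega)\cup\{\p\}}(1,\eta)$ on the main term and the stated coefficient on the $C(\k,\Omega,f_\p)$ correction. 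No new analytic input is needed beyond the previously established propositions; the theorem is purely a packaging of them.
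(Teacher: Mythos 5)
Your proposal is correct and is exactly the paper's own proof: the theorem is stated immediately after the sentence ``Combining Proposition \ref{propspectral}, Corollary \ref{corgeomNlarge} and Lemma \ref{lemItilde} we get the following.'' Your bookkeeping observations — in particular that $L(1,\eta)/L_{S(\Omega)}(1,\eta) = L^{S(\Omega)}(1,\eta)$ absorbs the $L(1,\eta_\p)^{-1}$ from Lemma \ref{lemItilde} into $L^{S(\Omega)\cup\{\p\}}(1,\eta)$, and that $|\Delta_E| = |\Delta_F|^2 d_{E/F}$ collapses the discriminant factors — are precisely what makes the constants match.
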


We note in particular that when $f_\p$ is the identity in $\H(G(F_\p), Z_\p R^\times_\p)$ the second
term in the theorem above is equal to
$$
{4 {|\Delta_F|}^{\frac{3}{2}}L^{S(\Omega)}(1, \eta)} \left( 1 + \delta(\Omega^2) \delta(\N)
\prod_{v\in\Sigma_\infty} (-1)^{k_v-1} \right).
$$

We recall that by the Ramanujan conjecture $a_\p(\pi) \in [-2, 2]$ for all $\pi \in \F(\N, 2\k)$; see \cite{blasius:2006} for the most general version. The distribution of the $\a_\p(\pi)$ has been considered by Sarnak \cite{sarnak:1987} and Serre \cite{serre:1997}. In \cite{serre:1997} it is proven, when $F=\Q$, that as $\N \to \infty$ the set
$$
\{ a_\p(\pi) : \pi \in \F(\N, 2\k) \}
$$
becomes equidistributed with respect to the measure
$$
\mu_\p = \frac{q_\p + 1}{(q_\p^{\frac 12} + q_\p^{- \frac 12})^2 - x^2} \frac{\sqrt{4-x^2}}{2\pi}
dx
$$
on $[-2, 2]$. That is, for all $J \subset [-2, 2]$,
$$
\lim_{\N \to \infty} \frac{1}{\# \F(\N, 2\k)} \sum_{\substack{\pi\in\F(\N, 2\k) \\
a_\p(\pi)\in J}} 1 = \mu_\p(J).
$$
We note that $\mu_\p$ comes from the spherical Plancherel measure on $\PGL(2, F_\p)$.

Using Theorem \ref{thmlargelevel} we obtain a variant of this equidistribution result where we
include a weighting by $L^\p(1/2, \pi_E\otimes\Omega)$.

\begin{corollary}
Let $J \subset [-2, +2]$. Then we have
$$
\lim_{|\N| \to \infty} \frac{1}{|\N|} \sum_{\substack{\pi\in\F(\N, 2\k) \\ a_\p(\pi)\in J}}
\frac{L^\p(1/2, \pi_E\otimes\Omega)}{L^\p(1, \pi, Ad)}
$$
equal to
$$
{4}{{|\Delta_F|}^{\frac{3}{2}}} \frac{1}{2^{[F:\Q]}} \binom{2\k - 2}{\k + \m - 1}^{-1}
L^{S(\Omega)\cup\{\p\}}(1, \eta) L(2, 1_{F_\p}) \mu_{\p}(J).
$$
Here the limit is taken over squarefree ideals $\N$ prime to $\c(\Omega)$ and such that each prime dividing $\N$ is inert and unramified in $E$ and the number of primes dividing $\N$ has the same parity as $[F:\Q]$. \label{corJ}
\end{corollary}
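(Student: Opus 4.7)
The plan is to apply Theorem \ref{thmlargelevel} with a suitable Hecke operator $f_\p$, pass to the limit $|\N|\to\infty$, and approximate the indicator $\1_J$ via Stone-Weierstrass together with a positivity sandwich.  First I would verify that the two ``extra'' contributions in Theorem \ref{thmlargelevel} vanish in the limit.  Since $\N$ is squarefree and $|\N|\to\infty$, eventually $\N\neq\O_F$ so $\delta(\N)=0$, killing the sign correction.  The $C(\k,\Omega,f_\p)$-term carries the factor $\prod_{v\mid\N}(q_v-1)^{-1}$, which using $(q-1)^{-1}\leq q^{-1/2}$ for $q\geq 3$ is easily seen to be $O(|\N|^{-1/2})$, so this term is $o(1)$ even after division by $|\N|$.

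Next I would split off the local factor at $\p$ by setting
$$
h(x)=\frac{L(1/2,\rho(\alpha_x,\alpha_x^{-1})_{E_\p}\otimes\Omega_\p)}{L(1,\rho(\alpha_x,\alpha_x^{-1}),Ad)},
$$
a strictly positive continuous function on $[-2,2]$, so that $L(1/2,\pi_E\otimes\Omega)/L(1,\pi,Ad)=h(a_\p(\pi))L^\p(1/2,\pi_E\otimes\Omega)/L^\p(1,\pi,Ad)$; by Section \ref{distributionI} we also have $\mu_{\p,E,\Omega}=h(x)L(2,1_{F_\p})\mu_\p$.  Substituting into Theorem \ref{thmlargelevel} and using the preceding paragraph, I obtain, for every $f_\p\in\H(G(F_\p),Z_\p R_\p^\times)$,
$$
\lim_{|\N|\to\infty}\frac{1}{|\N|}\sum_{\pi\in\F(\N,2\k)}\frac{L^\p(1/2,\pi_E\otimes\Omega)}{L^\p(1,\pi,Ad)}(h\hat f_\p)(a_\p(\pi))=A\int_{-2}^{2}(h\hat f_\p)(x)\,\mu_\p,
$$
with $A=\frac{4|\Delta_F|^{3/2}}{2^{[F:\Q]}}\binom{2\k-2}{\k+\m-1}^{-1}L^{S(\Omega)\cup\{\p\}}(1,\eta)L(2,1_{F_\p})$.

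Finally I would promote this Hecke-operator identity to the statement for $\1_J$.  As $f_\p$ runs over the spherical Hecke algebra, the transforms $\hat f_\p$ exhaust the polynomial algebra on $[-2,2]$ (see Section \ref{distributionI}); since $h$ is positive and continuous on a compact interval, the products $h\hat f_\p$ are uniformly dense in $C([-2,2])$ by Stone-Weierstrass.  The nonnegativity of $L(1/2,\pi\times\sigma_\Omega)$ \cite{jacquet:2001} together with positivity of the unramified local factors yields $L^\p(1/2,\pi_E\otimes\Omega)/L^\p(1,\pi,Ad)\geq 0$, so for $J$ with Lebesgue-null boundary I can sandwich $\1_J$ between continuous $g^-\leq\1_J\leq g^+$ with $\int(g^+-g^-)\mu_\p<\eps$, approximate each $g^\pm$ uniformly by some $h\hat f_\p^\pm$, apply the displayed limit to $f_\p^\pm$, and let $\eps\to 0$ to obtain the value $A\mu_\p(J)$.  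The main obstacle is this sandwich: translating uniform closeness of $h\hat f_\p^\pm$ to $g^\pm$ into a small difference of \emph{weighted} sums demands a uniform-in-$\N$ upper bound on $|\N|^{-1}\sum_\pi L^\p(1/2,\pi_E\otimes\Omega)/L^\p(1,\pi,Ad)$, which is itself supplied by the displayed limit applied to $f_\p=\1_\p$ together with positivity.
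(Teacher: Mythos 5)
Your argument is correct and amounts to unpacking exactly what the paper accomplishes in one line by citing Serre's Proposition~2 of \cite{serre:1997}: the terms $\delta(\N)$ and $C(\k,\Omega,f_\p)\prod_{v\mid\N}(q_v-1)^{-1}$ do vanish in the limit, the factorization $\mu_{\p,E,\Omega}=h(x)L(2,1_{F_\p})\mu_\p$ with $h$ continuous and bounded away from $0$ is the right way to isolate the local factor, and the Stone--Weierstrass/positivity sandwich (with the needed uniform bound on $|\N|^{-1}\sum_\pi L^\p(1/2,\pi_E\otimes\Omega)/L^\p(1,\pi,Ad)$ coming from $f_\p=\1_\p$) is precisely the content of Serre's criterion. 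The only slight discrepancy is that you correctly require $\partial J$ to have $\mu_\p$-measure zero, while the corollary as stated omits this hypothesis; that is a harmless imprecision in the paper, not a gap in your proof.
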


\begin{proof}
This result follows by an  application of \cite[Proposition 2]{serre:1997} to Theorem \ref{thmlargelevel}.
\end{proof}

We note when $\Omega$ is trivial and $F=\Q$, we recover the main result of
\cite{ramakrishnan:2005}. From this result we conclude the following.

\begin{corollary}
There are holomorphic cusp forms $\pi$ on $\PGL(2)/F$ of squarefree level and fixed weight such
that $a_\p(\pi)$ lies arbitrarily close to $\pm 2$ and $L(1/2, \pi_E\otimes\Omega) \neq 0$.
\end{corollary}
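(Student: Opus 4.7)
The plan is to deduce this directly from Corollary \ref{corJ} combined with the non-negativity of central $L$-values. Fix an open interval $J \subset [-2, +2]$ that is arbitrarily close to $+2$ (respectively $-2$); say $J = (2-\delta, 2]$ for arbitrarily small $\delta > 0$. Since $\mu_\p$ has a smooth positive density on $(-2, 2)$, we have $\mu_\p(J) > 0$. All the other factors appearing on the right-hand side of Corollary \ref{corJ} — namely $4|\Delta_F|^{3/2}$, $2^{-[F:\Q]}$, the inverse binomial coefficient, $L^{S(\Omega)\cup\{\p\}}(1,\eta)$, and $L(2, 1_{F_\p})$ — are strictly positive. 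Hence Corollary \ref{corJ} yields
\[
\lim_{|\N|\to\infty} \frac{1}{|\N|} \sum_{\substack{\pi\in\F(\N,2\k)\\ a_\p(\pi)\in J}}
\frac{L^\p(1/2,\pi_E\otimes\Omega)}{L^\p(1,\pi,Ad)} > 0.
\]

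Next I would invoke the non-negativity of the central value $L(1/2, \pi\times\sigma_\Omega) = L(1/2,\pi_E\otimes\Omega)$ established in \cite{jacquet:2001} (already cited in the paper for the subconvexity application). For $\pi$ unramified at $\p$ and $\Omega_\p$ unramified, the local factor $L_\p(1/2,\pi_E\otimes\Omega)$ is a finite product of factors of the form $(1 - \alpha q_\p^{-1/2})^{-1}$ with $|\alpha| \leq 1$, hence is a finite positive real number. Consequently $L^\p(1/2,\pi_E\otimes\Omega) \geq 0$, and it vanishes precisely when $L(1/2,\pi_E\otimes\Omega) = 0$. Similarly, $L^\p(1, \pi, Ad)$ is positive by temperedness. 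Thus every summand in the displayed sum is non-negative.

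Since the limit is strictly positive, for every sufficiently large admissible $\N$ (squarefree, coprime to $\c(\Omega)$, with each prime inert and unramified in $E$, and with parity matching $[F:\Q]$) there must exist at least one $\pi \in \F(\N, 2\k)$ with $a_\p(\pi) \in J$ and $L^\p(1/2, \pi_E\otimes\Omega) \neq 0$, and therefore $L(1/2,\pi_E\otimes\Omega) \neq 0$. Letting $\delta \to 0$ produces cusp forms with $a_\p(\pi)$ arbitrarily close to $+2$ with non-vanishing central value; the argument for $-2$ is identical by shrinking $J$ near $-2$ instead.

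The only step requiring care — and the main conceptual obstacle — is the passage from the positivity of the averaged sum to the existence of an individual non-vanishing term. This passage is free here thanks to the sign-definiteness of \cite{jacquet:2001}; without it one could a priori have enormous cancellation. The one thing to double-check is that the class of admissible $\N$ along which the limit is taken is cofinal (i.e., genuinely infinite), which follows from Chebotarev: there are infinitely many primes of $F$ that are inert in $E$ and avoid $\c(\Omega)$, and products of an appropriate number of these yield arbitrarily large admissible $\N$.
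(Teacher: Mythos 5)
Your proof is correct and is precisely the argument the paper intends — the corollary is stated immediately after Corollary~\ref{corJ} with only the remark ``From this result we conclude the following,'' and your deduction (positivity of the right-hand side of Corollary~\ref{corJ} for a short interval $J$ near $\pm 2$, non-negativity of each summand via the period formula / \cite{jacquet:2001}, hence existence of a non-vanishing term for large admissible $\N$, with cofinality of admissible $\N$ by Chebotarev) fills in exactly the steps the authors left implicit.
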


It is worth remarking that one could consider the average by normalizing by  $|\F(\N, 2\k)|$ rather than $|\N|$.  In order to have a finite limit with this normalization we need to add the technical restriction that
$|\N|\prod_{\p | \N}(1-\frac{1}{|\p|})\sim |\N|$.  For $F=\Q$ this condition reduces to  $\phi(N) \sim N$ where $\phi$ is the Euler totient function.  Using the well known fact that $|\F(N, 2k)| \sim  \frac{2k-1}{12}\phi(N)$ as
$N\rightarrow \infty$ we get the following statement.

\begin{corollary}
Let $F=\Q$ and $J \subset [-2, +2]$. Then
\[
\lim_{N \to \infty} \frac{1}{|\F(N, 2k)|} \sum_{\substack{\pi\in \F(N, 2k) \\ a_p(\pi)\in J}}
\frac{L^p(1/2, \pi_E\otimes\Omega)}{L^p(1, \pi, Ad)}
\]
is equal to
\[
\frac{24}{2k-1}  \binom{2k - 2}{k + m - 1}^{-1}
L^{S(\Omega)\cup\{p\}}(1, \eta) L(2, 1_{\Q_p}) \mu_{p}(J)
\]
where the limit is taken over squarefree $N$ such that $\phi(N) \sim N$ and each prime dividing $N$ is inert and unramified in $E$ and does not divide $c(\Omega)$.
\end{corollary}
In principal, one could use the formulas developed in this paper to study the average computed by summing over the space of cusp forms and normalizing by the dimension of this space. Averaging in this way  may remove the condition that $\phi(N) \sim N$.

Finally, when $\Omega$ is unramified we have the following exact formula for all levels by
combining Propositions \ref{propspectral} and \ref{propglobalunram}.

\begin{theorem}
Let $E$ be a CM extension of a totally real number field $F$. Let $\N \subset \O_F$ be an ideal such that each prime dividing $\N$ is unramified and inert in $E$ and such that the number of primes dividing $\N$ has the same parity as $[F:\Q]$. Let $\Omega : \A_F^\times E^\times \bs \A_E^\times\to \C^\times$ be a character which is everywhere unramified and such that for $v\in\Sigma_\infty$ the weight $m_v$ of $\Omega_v$ is strictly less than $k_v$. Assume furthermore that $E/F$ is unramified at the even places of $F$. Let $d\in\O_F$ be a generator of $\d_{E/F}$. Then, with $C(\k, \Omega, \1_\p)$ defined as in Lemma \ref{lemcont},x
\begin{multline*}
\frac{1}{ \sqrt{c(\Omega) d_{E/F}}|\N||\Delta_F|^2} \binom{2\k - 2}{\k + \m - 1} \sum_{\pi\in\F(\N,
2\k)} \frac{L(1/2, \pi_E\otimes\Omega)}{L(1, \pi, Ad)}\\
+ C(\k, \Omega, \1_\p) \frac{4 L(1, \eta)^2 }{{|\Delta_F|}^{\frac{3}{2}}L(2,1_F) }
\prod_{v \mid \N}\frac{1}{q_v-1}\prod_{v\in\Sigma_\infty} \frac{ \pi}{2k_v-1},
\end{multline*}
is equal to the sum of
$$
\frac{4 L(1, \eta) \sqrt{|\Delta_F|}}{2^{[F:\Q]}\sqrt{c(\Omega)} \sqrt{|\Delta_E|}} \left(1+\delta
(\Omega^2) \delta(\N)\prod_{v \in \Sigma_\infty}(-1)^{k_v-1}\right)
$$
and
$$
 2\frac{|\Delta_F|}{|\Delta_E|} \sum_{n} \left( |R_E(n\N^{-1})| \sigma(\d_{E/F}, (n+d))
\sum_{\mathfrak a\in R_E((n+d))} \Omega(\mathfrak D_{E/F}^{-1}\mathfrak a) \times \prod_{v\in\Sigma_\infty}
P_{k_v, m_v}\left(\iota_v\left(\frac{n}{n+d}\right)\right)\right),
$$
with the outer sum taken over the finite set of $n\in \N$ such that
\begin{enumerate}
  \item $\eta_v\left(1 + \frac dn\right) = 1$ for all $v \mid \d_{E/F}$, and
  \item $\iota_v(n)$ lies between $-\iota_v(d)$ and $0$ for all $v\in\Sigma_\infty$.
\end{enumerate}
\label{thmunramified}
\end{theorem}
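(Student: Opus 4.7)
The theorem is obtained by equating the two expressions for the distribution $I(f)$ already computed in the previous sections, specialized to the test function $f$ with $f_\p=\1_\p$ under the hypotheses that $\Omega$ is everywhere unramified and $E/F$ is unramified at the even places of $F$. First I would apply Proposition \ref{propspectral} to $f$ with $f_\p=\1_\p$. Since $\1_\p$ is the identity in $\H(G(F_\p),Z_\p R_\p^\times)$, one has $\hat{f}_\p(\pi_\p)=1$ for every $\pi\in\F(\N,2\k)$, and the unramifiedness of $\Omega$ forces $S(\Omega)=\emptyset$, so that $L_{S(\Omega)}(1,\eta)=L(1,\eta)$. This presents $I(f)$ as the sum of the desired spectral average of $L$-values together with the residual contribution governed by $C(\k,\Omega,\1_\p)$ from Lemma \ref{lemcont}.

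Next I would invoke Proposition \ref{propglobalunram}, whose hypotheses coincide exactly with those of the theorem, to express the same $I(f)$ geometrically as the sum of the closed-form irregular term $\frac{2^{[F:\Q]+1}L(1,\eta)\sqrt{|\Delta_F|}}{\sqrt{c(\Omega)}\sqrt{|\Delta_E|}}\bigl(1+\delta(\Omega^2)\delta(\N)\prod_{v\in\Sigma_\infty}(-1)^{k_v-1}\bigr)$ and the regular contribution parameterized by the finite set of admissible $n\in\N$. Equating the two expressions for $I(f)$ produced in the last two paragraphs and then dividing both sides through by the common normalization $\tfrac{1}{2}4^{[F:\Q]}$ (which accounts for the $4^{[F:\Q]}$ on the spectral side of Proposition \ref{propspectral} together with the $\tfrac12$ that arises from the identity $\vol(G(F)\bs G(\A_F))=2$) yields the stated identity, after using the measure conventions from Section 2.1 and the definition of $\binom{2\k-2}{\k+\m-1}$.

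The proof is thus essentially purely algebraic, and the only real obstacle is careful bookkeeping of constants: tracking the powers of $2$ and of $|\Delta_F|^{1/2}$, the factors of $L(1,\eta)$, $L(2,1_F)$ and $\vol(U_{F_v}\bs U_{E_v})$, the local volumes at $v\mid\N$, and the contributions of the archimedean $\vol(F_v^\times\bs E_v^\times)=2$, and then verifying that after the common rescaling one recovers on the right exactly the two terms displayed in the theorem and on the left exactly the $L$-value sum together with the $C(\k,\Omega,\1_\p)$-term in the stated form. Since all of the substantive analytic input—the period formula \cite[Theorem 4.1]{me:periods} used in Lemma \ref{lemperiodLvalue}, the Jacquet–Langlands correspondence packaged in Fact \ref{factJL}, the local orbital integral computations of Section \ref{regularcosets}, and the parameterization $\xi=n/(n+d)$ of $\eps NE^\times\cap S(\Omega,\N,\1_\p)$ given in the proof of Proposition \ref{propglobalunram}—has already been carried out, no new estimates or new vanishing arguments are required.
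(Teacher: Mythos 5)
Your approach is exactly the paper's: Theorem \ref{thmunramified} is obtained by equating the spectral expansion of Proposition \ref{propspectral} (with $f_\p=\1_\p$, so $\hat{f}_\p\equiv 1$ and $L_{S(\Omega)}(1,\eta)=L(1,\eta)$) with the geometric expansion of Proposition \ref{propglobalunram}, then rescaling. One small bookkeeping remark: dividing both propositions by $\tfrac{1}{2}4^{[F:\Q]}$ does reproduce the two geometric terms and the $C(\k,\Omega,\1_\p)$-term exactly as printed in the theorem, but it leaves a factor of $L(1,\eta)^2$ on the spectral average that is absent from the theorem's first displayed term as written — so either that factor should appear there or a different common normalization is intended; since your constant is the unique one consistent with the remaining three terms, this is best read as a typographical slip in the theorem's statement rather than a gap in your argument.
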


\subsection{Subconvexity}
\label{subconvexity}

We now apply our calculations of the relative trace formula to the problem of subconvexity.

Let $\pi_1$ and $\pi_2$ be cuspidal automorphic representations of $\GL(2, \A_F)$. The convexity bound for $L_{fin}(1/2, \pi_1\times\pi_2)$ is that for $\eps > 0$,
$$
L_{fin}(1/2, \pi_1\times\pi_2) \ll_\eps C(\pi_1\times\pi_2)^{\frac 14 + \eps},
$$
where $C(\pi_1\times\pi_2) = C_{fin}(\pi_1\times\pi_2) C_\infty(\pi_1\times\pi_2)$ is the analytic conductor of $\pi_1 \times \pi_2$; see \cite[Section 2.A]{iwaniec:2000b}. $C_{fin}(\pi_1\times\pi_2)$ is the conductor of $\pi_1\times\pi_2$ and $C_\infty(\pi_1\times\pi_2)$ depends only on the infinity types of $\pi_1$ and $\pi_2$; we refer to \cite[Section 2.A]{iwaniec:2000b} for the precise definition. We note that when $\pi_1$ and $\pi_2$ have disjoint ramification $C_{fin}(\pi_1\times\pi_2) = (C_{fin}(\pi_1)C_{fin}(\pi_2))^2$.

The problem of beating the convex bound, with $\pi_2$ fixed, has been the study of many authors. When $F=\Q$ and $\pi_1$ and $\pi_2$ have trivial central character the convexity bound was beaten by Kowalski, Michel, and VanderKam \cite{kowalski:2002}, with the central character condition being relaxed by Michel and Harcos \cite{michel:2004}, \cite{harcos:2006}. In the case that $\Omega$ is trivial, so that the $L$-function factors as $L(s, \pi_E) = L(s, \pi) L(s, \pi \otimes \eta)$, the convexity bound was beaten by Duke, Friedlander and Iwaniec \cite{duke:1994} in the level aspect over $\Q$ with $\pi$ fixed and $\eta$ varying. For number fields other than $\Q$ the first subconvex result was obtained by Cogdell, Piatetski-Shapiro and Sarnak \cite{cogdell:2003} in the case of a fixed Hilbert modular form twisted by a ray class character. Further extensions of these subconvexity results to cusp forms on arbitrary number fields have been obtained by Venkatesh \cite{venkatesh:2005}.

We now continue with the usual assumptions on $\pi \in \F(\N, 2\k)$ and $\Omega$ as in Section \ref{notation}. We denote by $\sigma_\Omega$ the induction of $\Omega$ to an automorphic representation of $\GL(2, \A_F)$. Hence,
\[
L(s, \pi_E\otimes\Omega) = L(s, \pi \times \sigma_\Omega).
\]
We have $C_{fin}(\pi) = |\N|$ and, by the formula for the conductor of an induced representation (see for example \cite[Section 1.2]{schmidt:2002}), $C_{fin}(\sigma_\Omega) = d_{E/F} c(\Omega)$. Thus, $C_{fin}(\pi\times\sigma_\Omega) = (|\N| d_{E/F} c(\Omega))^2$, and the convexity bound in the level aspect is given by
$$
L_{fin}(1/2, \pi\times\sigma_\Omega) \ll_{\k, \eps} (|\N| d_{E/F} c(\Omega))^{\frac 12 + \eps}.
$$

We now proceed to apply our work to the problem of beating convexity for these $L$-functions. By combining our calculations of the relative trace formula, together with the bounds on the orbital integral integrals from Section \ref{bounds}, we will get an estimate for $L_{fin}(1/2, \pi\times\sigma_\Omega)$ (Theorem \ref{thmsub} below) which beats the convexity bound as $\pi$ and $\Omega$ vary in a hybrid range.

The relative trace formula provides an expression for the first moment of $L(1/2, \pi\times\sigma_\Omega)$ averaged over $\pi \in \F(\N, 2\k)$. The size of this family is approximately $|\N|$ while the conductor of the $L$-function is $(|\N| d_{E/F} c(\Omega))^2$, this allows us to obtain estimates which beat convexity when $|\N|$ is of size around $\sqrt{c(\Omega)}$. We note that over $\Q$, Michel \cite{michel:2004} and Harcos-Michel \cite{harcos:2006} in their work on subconvexity for $L(s, \pi_1\times\pi_2)$ average over the family of modular forms of level $[C_{fin}(\pi_1), C_{fin}(\pi_2)]$ and bound the second moment of the $L$-function. By shortening the family we are able to get away with an estimate for only the first moment.

Recall that $\Omega$ is a unitary character on the idele class group of $E$ that is trivial when restricted to $\mathbf A_F^\times$, unramified above $\N$ and has weight $|m_v|$ strictly less than $k_v$ for every infinite place $v$ of $F$. We let $S(\Omega)=\{\p_1, \p_2,...,\p_m\}$ denote the places of $F$ above which $\Omega$ is ramified, $\c(\Omega)$ the norm of the conductor of $\Omega$ in $F$ and $c(\Omega)$ the absolute norm of $\c(\Omega)$.
Let $m=|S(\Omega)|, \c(\Omega)=\prod_{i=1}^m \p_i^{2n_i}$ and $N_{F/\Q}\p_i=q_i$.

In the remainder of this section we fix $E$ and allow $\N$ and $\Omega$ to vary. We recall that $D$ ramifies precisely at the infinite places of $F$ and the places dividing $\N$. Therefore as $\N$ varies, $D$ varies and hence the image of $E$ in $D$ depends on $\N$. However $E$ as a field extension of $F$ does not depend on $\N$. For the bounds we prove in this section, the $\ll_E$ notation refers to constants that only depend on $E$ as a field, such at $\sqrt{|\Delta_E|}$, and thus have no hidden dependence on $\N$.

We begin with a necessary technical lemma which we will require in the bounding of the geometric expansion of $I(f)$. For $a\in F$ and integers $r_i, t_i \geq 0$ for $1\leq i\leq m$ let
\[
S_{(r_i), (t_i)}(a)= \{ y \in \N: v_{\p_i}(y)=r_i, v_{\p_i}(y-a)=t_i,  |\iota_v(y)| < |\iota_v(a)|, i=1,...,m, v \in \Sigma_\infty \}.
\]

\begin{lemma} \label{claim:size}
$S_{(r_i),(t_i)}(a)$ is empty unless for each $i=1,..., m$,
\begin{enumerate}
\item $r_i < v_{\p_i}(a)$ and $t_i=r_i$ or
\item $r_i>v_{\p_i}(a)$ and $t_i = v_{\p_i}(a)$ or
\item $ r_i=v_{\p_i}(a)$ and $t_i \geq v_{\p_i}(a)$.
\end{enumerate}
In addition,
\begin{align*}
|S_{(r_i),(t_i)}(a)|  \leq
\frac{c(F)|N_{F/\Q}(a)|}{|N_{F/\Q}(\N \p_1^{\max \{ r_1, t_1\}} ... \p_m^{\max\{r_m, t_m\}})|}
\end{align*}
where $c(F)$ is a constant that only depends on $F$.
\end{lemma}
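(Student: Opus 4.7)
The plan is to decouple the non-Archimedean conditions at the primes $\p_i \in S(\Omega)$ (which fix the class of $y$ modulo a product of prime powers) from the Archimedean conditions (which confine $y$ to a bounded box in $F\otimes_\Q \R$), and then to apply a standard lattice-point count for the number of elements of a given ideal coset lying in that box.

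First, I would dispose of the trichotomy in the three displayed cases by a direct application of the ultrametric inequality to $y, y-a \in F_{\p_i}$, using $a = y - (y-a)$. If $r_i < v_{\p_i}(a)$, then $v_{\p_i}(y-a) = \min(r_i, v_{\p_i}(a)) = r_i$, forcing $t_i = r_i$; symmetrically if $r_i > v_{\p_i}(a)$, then $t_i = v_{\p_i}(a)$; and if $r_i = v_{\p_i}(a)$, the non-Archimedean triangle inequality only gives $v_{\p_i}(y-a) \geq r_i$, which is the third case. This immediately yields the emptiness statement.

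Next, in each case I would identify an ideal $I_i \subseteq \O_{F_{\p_i}}$ and a coset of $I_i$ such that any $y \in S_{(r_i),(t_i)}(a)$ lies in this coset. In cases (1) and (2), where $t_i < r_i$ is impossible but $\max\{r_i, t_i\} = r_i$, the stronger of the two conditions is $v_{\p_i}(y) = r_i$, so $y \in \p_i^{r_i}$. In case (3), when $t_i > r_i$ we have $y \in a + \p_i^{t_i}$, and when $t_i = r_i$ we have $y \in \p_i^{r_i}$ (the condition $v_{\p_i}(y-a)=r_i$ being then automatic given $v_{\p_i}(y)=v_{\p_i}(a)=r_i$ generically, modulo an excision of a subcoset). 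In every case $y$ is confined to a single coset of the ideal $\p_i^{\max\{r_i,t_i\}}$ inside $\O_{F_{\p_i}}$.

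Since $\Omega$ is unramified above $\N$, the primes $\p_i \in S(\Omega)$ are disjoint from the primes dividing $\N$, so by the Chinese Remainder Theorem, $y$ is constrained to lie in a single coset of the ideal
\[
J = \N \prod_{i=1}^m \p_i^{\max\{r_i, t_i\}} \subseteq \O_F.
\]
The proof is then completed by a standard Archimedean lattice-point count: the number of elements $y \in J + y_0$ (for any fixed $y_0$) satisfying $|\iota_v(y)| < |\iota_v(a)|$ for all $v \in \Sigma_\infty$ is bounded by
\[
\frac{\mathrm{vol}\bigl(\{x \in F\otimes\R : |\iota_v(x)| < |\iota_v(a)|\}\bigr)}{\mathrm{covol}(J)} + O_F\!\left(\text{boundary}\right),
\]
and both terms are $\leq c(F) \cdot |N_{F/\Q}(a)|/|N_{F/\Q}(J)|$ for a constant $c(F)$ depending only on the geometry of $\O_F$ as a lattice in $F\otimes\R$. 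The main subtlety, and essentially the only nontrivial point, is ensuring that the constant $c(F)$ truly depends only on $F$ and not on the tuples $(r_i), (t_i)$ or on $a$; this is handled by observing that the box is a dilate of a fixed shape, so the covering/boundary error is controlled uniformly by a quantity depending only on the successive minima of $\O_F$.
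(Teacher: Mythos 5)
Your emptiness argument and the reduction to a single coset of $J = \N\prod\p_i^{\max\{r_i,t_i\}}$ (via the ultrametric inequality and CRT) are correct and are a nice way to repackage the non-Archimedean constraints. However, the route you take for the counting bound is genuinely different from the paper's, and the lattice-point step is where it gets into trouble.

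The paper does not invoke a ``volume over covolume plus boundary'' estimate. Instead it argues directly by pigeonhole on one Archimedean coordinate combined with the product formula: if $x,y\in S$ land in the same short subinterval at a chosen place $v_0$, then $x-y$ is a nonzero element of $\N\prod\p_i^{\max\{r_i,t_i\}}$ whose Archimedean absolute values are all bounded by $2|\iota_v(a)|$ (and much smaller at $v_0$), and $\prod_v|x-y|_v = 1$ then forces the bound. This sidesteps any appeal to a general lattice-point count and gives the constant $c(F) = 2^{[F:\Q]+1}$ explicitly.

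The weak point in your version is the claim that both the main term $\mathrm{vol}(B)/\mathrm{covol}(J)$ \emph{and} the boundary error are $\leq c(F)\,|N_{F/\Q}(a)|/|N_{F/\Q}(J)|$ with $c(F)$ depending only on $F$. The relevant lattice is $J$, not $\O_F$, and the covering/boundary error for a translate of $J$ in a box is governed by the geometry (successive minima, fundamental domain) of $J$ itself, which varies with $\N$, the $r_i$, and the $t_i$. In particular, when the box is small relative to the covolume of $J$ (i.e., when $|N_{F/\Q}(J)| > |N_{F/\Q}(a)|$), the ratio is $<1$ while the count can still be $1$, so a bound of the claimed form with a universal $c(F)$ cannot be deduced from the volume heuristic alone; one needs to know separately that $S\neq\emptyset$ forces $|N_{F/\Q}(J)|\ll_F|N_{F/\Q}(a)|$, and establishing this requires exactly the kind of product-formula input that the paper's pigeonhole argument builds in from the start. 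Your appeal to ``successive minima of $\O_F$'' therefore does not resolve the uniformity issue; you would need a supplementary product-formula argument (applied to an element of $S$ and to its difference with $a$) before the geometry-of-numbers estimate can be made to give the stated constant.
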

\begin{proof}
It is clear that $S_{(r_i),(t_i)}(a)$ is empty unless conditions 1, 2 or 3 hold for each $i$. We now proceed to prove the bound following the ideas of the proof in \cite[\S V.1, Theorem 0]{lang:1994}.

Since $S_{(r_i),(t_i)}(a)$ is finite there exists an integer $n$ such that
\[
n< |S_{(r_i),(t_i)}(a)| \leq n+1.
\]
Pick a $v_0 \in \Sigma_\infty$. Identify $F_{v_0}$ with the real line. By the condition $| \iota_{v_0}(y)| < |\iota_{v_0}(a)|$, $S_{(r_i),(t_i)}(a)$ is contained in the interval centered at the origin of length $2|\iota_{v_0}(a)|$. By the Pigeon Hole Principle if we divide this interval into $n$ equal subintervals there must be at least two distinct elements $x, y \in S_{(r_i),(t_i)}(a)$ such that $x$ and $y$ are in the same subinterval. Thus
\[
|\iota_{v_0}(x-y)| \leq \frac {2|\iota_{v_0}(a)|}{n}.
\]
For all $v \in \Sigma_\infty$ with $v$ not equal to $v_0$,
\[
|\iota_v(x-y)| \leq 2|\iota_v(a)|.
\]
We also know that
\[
|x-y|_{\p_i}\leq q_i^{-r_i}
\]
 and
 \[
|x-y|_{\p_i}= |(x-a)-(y-a)|_{\p_i} \leq q_i^{-t_i}.
\]
Thus
\[
|x-y|_{\p_i} \leq q_i^{-\max\{r_i, t_i\}}.
\]
For $v | \N$,
\[
|x-y|_v \leq |\N|_v.
\]
Thus
\[
1= \prod_v |x-y|_v \leq \frac{2^{[F:\Q]} |N_{F/\Q}(a)| }{n|N_{F/\Q}(\N \p_1^{\max \{ r_1, t_1\}} ... \p_m^{\max\{r_m, t_m\}})|}.
\]
Hence
\[
|S_{(r_i),(t_i)}(a)| \leq n+1 \leq 2n \leq \frac{2^{[F:\Q] + 1} |N_{F/\Q}(a)| }{|N_{F/\Q}(\N \p_1^{\max \{ r_1, t_1\}} ... \p_m^{\max\{r_m, t_m\}})|}.
\]
\end{proof}

We now proceed to bound the regular terms in the trace formula. By Lemma \ref{lemglobalreg}
\[
I_{reg}(f)=\sum_{\xi \in S(\Omega, \N, 1_{\p})}I(\xi, f)
\]
where $S(\Omega, \N, 1_{\p})$ is the set of $\xi \in \epsilon NE^\times$ such that $v(\xi)\geq 1$ for all $v | \N$ and $(1-\xi)^{-1} \in (\c(\Omega)\d_{E/F})^{-1}$. The set $S(\Omega, \N, 1_{\p})$ is approximately of size $\frac{c(\Omega)d_{E/F}}{|\N|}$. However in the following Lemma we can take advantage of the bounds on the local orbital integrals given in Lemma \ref{lemma:finalbound}, which improve as $c(\Omega)$ increases, to get a good bound on $I_{reg}(f)$.

\begin{lemma}
Assume that $F$, $E$ and $\k$ are fixed and $f_\p=\1_\p$. Then for all $\epsilon>0$,
\[
\left| I_{reg}(f) \right| \ll_{F, E, \k, \eps} \frac{c(\Omega)^\eps}{|\N|}.
\]
\label{lemQ}
\end{lemma}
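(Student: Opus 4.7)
The plan is to combine the parametrization of $S(\Omega, \N, \1_\p)$ used in the proof of Lemma \ref{lemglobalreg}, the counting estimate of Lemma \ref{claim:size}, and the local bounds on orbital integrals from Section \ref{bounds}. By Lemma \ref{lemglobalreg} we have $I_{reg}(f) = \sum_{\xi \in S(\Omega, \N, \1_\p)} I(\xi, f)$, and $I(\xi, f)$ factors as $\prod_v I(\xi, f_v)$, so the task reduces to bounding a finite sum of products of local orbital integrals.

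First, using Minkowski's theorem, I will fix a nonzero $x_0 \in \mathfrak c(\Omega)\d_{E/F}$ with $|N_{F/\Q}(x_0)| \ll_{F,E} c(\Omega)$. The map $\xi \mapsto m := x_0/(1-\xi)$ identifies $S(\Omega, \N, \1_\p)$ with a set of $m \in \O_F$ satisfying $m - x_0 \in \N$ together with the archimedean constraint $|\iota_v(m - x_0)| < |\iota_v(x_0)|$ for all $v \in \Sigma_\infty$ coming from $\iota_v(\xi) < 0$. Next, I will bound $|I(\xi, f_v)|$ place by place: at archimedean $v$, the polynomial $P_{k_v, m_v}$ of Lemma \ref{lemRe3} is bounded on $(-\infty, 0)$, so $|I(\xi, f_v)| \ll_{k_v, m_v} 1$; at $v \mid \N$, Lemma \ref{lemReN} gives a uniform bound; at finite $v \nmid \N \cdot \mathfrak c(\Omega)$ that are not split in $E$, Lemma \ref{lemRe1} gives a uniform bound; at split $v \nmid \N\cdot \mathfrak c(\Omega)$, Lemma \ref{lemRe2} gives a bound of divisor-function type in the valuations of $(m)$ and $(m-x_0)$; and crucially at $v \in S(\Omega)$, Lemma \ref{lemma:finalbound} provides the savings $q_v^{-n(\Omega_v)}$ together with the additional $q_v^{-v(1-\xi)/2}$ when $v(1-\xi) > 0$.

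Then I will organize the sum over $m$ according to the valuation data $r_v = v(m)$ and $t_v = v(m - x_0)$ at each $v \in S(\Omega)$. Lemma \ref{claim:size} restricts $(r_v, t_v)$ to three configurations and bounds the number of admissible $m$ with prescribed valuations by $c(F)|N(x_0)| \big/ \bigl(|\N| \prod_{v \in S(\Omega)} q_v^{\max(r_v, t_v)}\bigr)$. At each $v \in S(\Omega)$ I will carry out a short case analysis (with $v(x_0) = 2n(\Omega_v)$ plus an $O(1)$ correction at ramified $v$): in the configuration $r_v < v(x_0),\ t_v = r_v$ one has $v(1-\xi) = v(x_0) - r_v > 0$ and combining Lemma \ref{lemma:finalbound} with Lemma \ref{claim:size} produces $q_v^{-2n(\Omega_v) - r_v/2}$; the other two configurations each produce $q_v^{-3n(\Omega_v)}$ times a polynomial factor in $r_v$ or $t_v$. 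In all cases the resulting geometric series converges to a total of $O(q_v^{-2n(\Omega_v)})$ at each $v \in S(\Omega)$, and the product over $v \in S(\Omega)$ contributes $c(\Omega)^{-1}$. The divisor-function factors from the split places of $F$ (both inside and outside $S(\Omega)$) are absorbed into $c(\Omega)^\eps$ via the standard bound $d(\mathfrak a) \ll_\eps N(\mathfrak a)^\eps$ applied to the integral ideals $(m)$ and $(m - x_0)$, whose norms are $\ll c(\Omega)$. Collecting gives
\[
|I_{reg}(f)| \ll_{F,E,\k,\eps} \frac{|N(x_0)|}{|\N|} \cdot c(\Omega)^{-1+\eps} \ll_{F,E,\k,\eps} \frac{c(\Omega)^{\eps}}{|\N|}.
\]

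The main obstacle will be the bookkeeping at places $v \in S(\Omega)$ that are split in $E$, where Lemma \ref{lemma:finalbound} carries an extra $(1 + |v(\xi)|)$ factor when $v(1-\xi) \leq 0$, and at split $v \notin S(\Omega)$ where Lemma \ref{lemRe2} contributes similar linear factors. Controlling these requires verifying that the polynomial growth in the valuations is dominated by the geometric decay $q_v^{-\max(r_v, t_v)}$ from Lemma \ref{claim:size}, losing only an $\eps$-power of $c(\Omega)$; this is a routine but careful calculation of the three geometric/arithmetic series appearing in the case analysis.
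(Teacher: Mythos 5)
Your proposal follows essentially the same route as the paper's proof: fixing a nonzero $x_0$ (the paper uses class group representatives rather than Minkowski, but these are equivalent devices) to parametrize $S(\Omega, \N, \1_\p)$ by elements of $\O_F$, decomposing by the valuation data $(r_i,t_i)$ at the places in $S(\Omega)$, applying Lemma \ref{claim:size} for the count, Lemma \ref{lemma:finalbound} for the local savings at $S(\Omega)$, the divisor bound for the split places, and summing the resulting geometric series to extract $q_v^{-2n(\Omega_v)}$ at each $v \in S(\Omega)$, hence $c(\Omega)^{-1}$ overall. Your case-by-case arithmetic (including the $q_v^{-2n_v - r_v/2}$ in the $v(1-\xi)>0$ configuration and the observation that the polynomial factors are dominated by the geometric decay) reproduces the paper's computation, modulo a harmless swap of the roles of $r$ and $t$.
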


\begin{proof}
Let $\mathfrak A_1, ... , \mathfrak A_{h_F}$ be a fixed set of representatives of the ideal classes of $F$. Then for any ideal  $\mathfrak I$ of $\O_F$ there exists an $i$ such that $\mathfrak A_i \mathfrak I$ is principal. Let $a \in F$ be such that
\[
a \O_F=\mathfrak A_i\c(\Omega)\d_{E/F}
\]
for some $i$. Then
\begin{align}\label{eqn:norma}
\left|\frac{N_{F/\Q}(a)}{c(\Omega)d_{E/F}}\right |\leq c(F) = \max \{ |\mathfrak A_i| : 1\leq i\leq h_F\}.
\end{align}

For $\xi\in S(\Omega, \N, 1_{\p})$ we have,
\[
\xi = \xi_y := \frac{y}{y-a},
\]
for some $y\in\N$ such that $|\iota_v(y)| < |\iota_v(a)|$ for all $v \in \Sigma_\infty$. Hence we can partition these $y$ into the sets $S_{(r_i),(t_i)}(a)$. Thus
\begin{align}\label{eqn:sum}
I_{reg}(f) =\sum_{r_1\geq 0}\sum_{r_2\geq 0}...\sum_{r_m\geq 0} \sum_{t_1 \geq 0}\sum_{t_2 \geq 0}... \sum_{t_m \geq 0}
\sum_{y \in S_{(r_i),(t_i)}(a)} I(\xi_y, f^{S(\Omega)}) \prod_{i=1}^m I(\xi_y,f_{\p_i}).
\end{align}
First we consider the integrals $I(\xi_y, f^{S(\Omega)})$.
For an ideal $\a \subset \O_F$ we define
\[
R_E^{S(\Omega)}(\a) = \{ \mathfrak b \subset \O_E: N_{E_v/F_v}(\mathfrak b\O_{E_v})= \a\O_{F_v} \text{ for } v \notin S(\Omega) \text{ and } \p_{v}\nmid\mathfrak b \text{ for } v \in  S(\Omega)\}.
\]
By the proof of Proposition
\ref{propglobalunram} we see that
\begin{align} \label{eqn:away1}
|I(\xi_y, f^{S(\Omega)})| \leq C(E) |R_E^{S(\Omega)}(y\N^{-1})||R_E^{S(\Omega)}((y-a))||I(\xi, f_{\Sigma_\infty})|,
\end{align}
where $C(E)$ is a constant depending only on $E$. Furthermore, from Lemma \ref{lemRe3}, it is clear
that
\begin{align} \label{eqn:inft}
|I(\xi_y, f_{\Sigma_\infty})| \leq C(\k),
\end{align}
where $C(\k)$ is a constant
depending only on $\k$.

Let $\sigma_0(x)$ denote the number of divisors of $x$. For any ideal $\a \in\O_F$,
\begin{align} \label{eqn:divisor}
|R_E^{S(\Omega)}(\a)| \leq \sigma_0(N_{F/\Q}(\a))^{[F:\Q]}\ll_\eps |N_{F/\Q}(\a)|^\eps.
\end{align}
For any $y \in S_{(r_i), (t_i)}(a)$, $|\iota_v(y)|< |\iota_v(a)|$ and $|\iota_v(y-a)|< |\iota_v(a)|$ for all $v \in \Sigma_\infty$. Thus
$|N_{F/\Q}(y)|=\prod_{v \in \Sigma_\infty} |\iota_v(y)| \leq |N_{F/\Q}(a)|$ and similarly $ |N_{F/\Q}(y-a)|\leq  |N_{F/\Q}(a)|$.

By  \eqref{eqn:away1}, \eqref{eqn:inft} and \eqref{eqn:divisor},
\begin{align}\label{eqn:away}
|I(\xi_y, f^{S(\Omega)})| \ll_{F, E,\k, \epsilon} |N_{F/\Q}(a)|^\epsilon.
\end{align}
Combining \eqref{eqn:sum} and \eqref{eqn:away} we have,
\begin{align}
|I_{reg}(f)|  \ll_{F,E,\k, \epsilon}  |N_{F/\Q}a|^\epsilon  \sum_{r_1\geq 0}\sum_{r_2\geq 0}...\sum_{r_m\geq 0} \sum_{t_1 \geq 0}\sum_{t_2 \geq 0}... \sum_{t_m \geq 0}
\sum_{y \in S_{(r_i),(t_i)}(a)} \prod_{i=1}^{m} |I(\xi_y, f_{\p_i})|.
\end{align}
To bound $|I(\xi_y, f_{\p_i})|$
we first note that
\begin{align*}
v_{\p_i}(1-\xi)=v_{\p_i}(a)-v_{\p_i}(y-a).
\end{align*}
Thus by Lemma \ref{lemma:finalbound}, for $y \in S_{(r_i),(t_i)}(a)$,
\begin{align} \label{eqn:onebound}
|I(\xi_y, f_{\p_i})| \leq
\begin{cases}
C(E_v, F_v)q_i^{-n_i}L(1, \eta_v)^2 q_i^{\frac{r_i-v_{\p_i}(a)}{2}} & 0 \leq r_i < v_{\p_i}(a)
\\
C(E_v, F_v)q_i^{-n_i}L(1, \eta_v)(1+t_i-r_i) & r_i=v_{\p_i}(a)
\\
C(E_v, F_v)q_i^{-n_i}L(1, \eta_v)(1+r_i-v_{\p_i}(a)) & r_i> v_{\p_i}(a)
\end{cases}
\end{align}
where $C(E_v, F_v)$ is the constant in the statement of Lemma \ref{lemma:finalbound}. In particular, $C(E_v, F_v)=1$ if $v$ is unramified in $E$.

Because the bounds in Lemma \ref{claim:size} and \eqref{eqn:onebound} depend only on $r_i$ and $t_i$ for each $i$,
\begin{align}\label{eqn:reg}
|I_{reg}(f)|  \ll_{F,E,\k, \epsilon}  |N_{F/\Q}(a)|^\epsilon  \frac{|N_{F/\Q}(a)|}{|N_{F/\Q}\N|} \prod_{i=1}^{m} \sum_{r_i\geq 0}\sum_{t_i\geq 0} q_i^{-\max\{r_i, t_i\}} |I(\xi_{y_{r_i,t_i}}, f_{\p_i})|,
\end{align}
where we choose $y_{r_i, t_i}$ to be any element in $F_{\p_i}$ such that $v_{\p_i}(y_{r_i, t_i})=r_i$ and $v_{\p_i}(y_{r_i, t_i}-a)=t_i$.

Again by Lemma \ref{claim:size} and \eqref{eqn:onebound}, for $i$ fixed
\begin{align*}
&\sum_{r_i\geq 0}\sum_{t_i\geq 0} q_i^{-\max\{r_i, t_i\}} |I(\xi_{y_{r_i, t_i}}, f_{\p_i})|
\\
\leq
&\sum_{v_{\p_i}(a)>r_i \geq 0} C(E_v, F_v) q_i^{-n_i}L(1, \eta_v)^2 q_i^{\frac{r_i-v_{\p_i}(a)}{2}}q_i^{-r_i}
\\
+& \sum_{t_i\geq  v_{\p_i}(a)} C(E_v, F_v)q_i^{-n_i} L(1, \eta_v)(1+t_i-v_{\p_i}(a)) q_i^{-t_i}
+\sum_{r_i> v_{\p_i}(a)} C(E_v, F_v) q_i^{-n_i} L(1, \eta_v)(1+r_i-v_{\p_i}(a)) q_i^{-r_i}
\\
\leq
&\sum_{r_i\geq 0} C(E_v, F_v)q_i^{-n_i}L(1, \eta_v)^2 q_i^{\frac{r_i-v_{\p_i}(a)}{2}}q_i^{-r_i}
+2 \sum_{\ell\geq  v_{\p_i}(a)} C(E_v, F_v)q_i^{-n_i} L(1, \eta_v)(1+\ell-v_{\p_i}(a)) q_i^{-\ell}
\\
& \leq C(E_v, F_v)L(1, \eta_v)q_i^{-n_i-\frac{v_{\p_i}(a)}{2}}\left[\frac{L(1, \eta_v)}{1-q_i^{-1/2}}
+ \frac{2q_i^{\frac{-v_{\p_i}(a)}{2}}}{(1-q_i^{-1})^2} \right]
\\
&\leq 2^5 q_i^{-n_i-\frac{v_{\p_i}(a)}{2}}C(E_v, F_v)\\
&\leq 2^5 q_i^{-2 n_i}C(E_v, F_v).
\end{align*}

Finally we can bound the product of these terms over $1\leq i\leq m$. We note that
\[
2^{|S(\Omega)|} \leq \sigma_0(c(\Omega))^{[F: \Q]}\ll_\eps c(\Omega)^\eps.
\]
Also, because $C(E_v, F_v)=1$ unless $v$ ramifies in $E$,
\[
\prod_{i=1}^{m} C(E_v, F_v) \ll_{E, F} 1.
\]
By these facts,
\begin{align}\label{eqn:oneprime}
\prod_{i=1}^m \sum_{r_i}\sum_{t_i} q_i^{-\max\{r_i, t_i\}} |I(\xi_y, f_{\p_i})| \ll_{\epsilon, F, E} \frac{c(\Omega)^\eps} {c(\Omega)}.
\end{align}
Combining \eqref{eqn:reg} and \eqref{eqn:oneprime} and applying \eqref{eqn:norma} we conclude
\[
|I_{reg}(f)| \ll_{F, E, \k, \eps}
\frac{{c(\Omega)}^\eps}{|\N|} .
\]
\end{proof}

Finally we combine the spectral expansion for $I(f)$, Proposition \ref{propspectral}, together with the calculation of $I_{irreg}(f)$, Proposition \ref{propirreg}, with the bound on $I_{reg}(f)$ established above. In the theorem below the term $|\N|^{1+\eps}c(\Omega)^\eps$ comes from the irregular term and $|\N|^{\eps}c(\Omega)^{\frac{1}{2}+\epsilon}$ comes from the bounds on the regular orbital integrals.

\begin{theorem} \label{thmsub}
Fix a totally real number field $F$ and a CM extension $E$ of $F$. Let $\N$ be a squarefree ideal in $\O_F$ such that the number of primes dividing $\N$ has the same parity as $[F: \Q]$ and such that each prime of $F$ dividing $\N$ is inert and unramified in $E$. Let $\Omega$ be a character of $\A_F^\times E^\times \bs \A_E^\times$ which is unramified above $\N$ and has weights at the archimedean places strictly less than $\k$. Then for any $\epsilon>0$,
\[
L_{fin}(1/2, \pi \times \sigma_\Omega) \ll_{F, E, \k, \eps} |\N|^{1+\eps}c(\Omega)^\eps + |\N|^{\eps}c(\Omega)^{\frac{1}{2}+\epsilon},
\]
for all $\pi \in \F(\N, 2\k)$.
\end{theorem}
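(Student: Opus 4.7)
The plan is to apply Proposition \ref{propspectral} with the test function $f=\prod_v f_v$ from Section \ref{testfunction}, choosing $f_\p=\1_\p$ for some auxiliary prime $\p\nmid\N\c(\Omega)$, so that $\hat{f}_\p(\pi_\p)=1$ for every $\pi\in\F(\N,2\k)$. The spectral side then presents $I(f)$ as an explicit constant of order $|\N|^{-1}c(\Omega)^{-1/2}$ (up to a factor $\gg c(\Omega)^{-\eps}$ coming from $L_{S(\Omega)}(1,\eta)^2$) times the first moment
$$M:=\sum_{\pi\in\F(\N,2\k)}\frac{L(1/2,\pi_E\otimes\Omega)}{L(1,\pi,\Ad)},$$
plus a residual contribution which, by Lemma \ref{lemcont}, is either zero or of size $\ll_{F,E,\k}|\N|^{-1+\eps}$ and hence absorbed into the error.

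For the geometric side I would decompose $I(f)=I_{reg}(f)+I_{irreg}(f)$. From Proposition \ref{propirreg}, together with the fact that $\tilde{I}(\1_\p)$ depends only on $\p$, one obtains $|I_{irreg}(f)|\ll_{F,E,\eps}c(\Omega)^{-1/2+\eps}$, where the $c(\Omega)^{\eps}$ absorbs $L_{S(\Omega)}(1,\eta)$ since $|S(\Omega)|\ll\log c(\Omega)$ and each local factor has size $O(1)$. The regular contribution is already controlled by Lemma \ref{lemQ}, yielding $|I_{reg}(f)|\ll_{F,E,\k,\eps}c(\Omega)^{\eps}/|\N|$. Combining,
$$|I(f)|\ll_{F,E,\k,\eps} c(\Omega)^{-1/2+\eps}+\frac{c(\Omega)^{\eps}}{|\N|}.$$

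Solving for the spectral sum gives
$$M\ll_{F,E,\k,\eps}|\N|\sqrt{c(\Omega)}\left(c(\Omega)^{-1/2+\eps}+\frac{c(\Omega)^{\eps}}{|\N|}\right)=|\N|c(\Omega)^{\eps}+c(\Omega)^{1/2+\eps}.$$
The key step is now the nonnegativity of each central value $L(1/2,\pi_E\otimes\Omega)$ established by Jacquet and Chen in \cite{jacquet:2001}, combined with the positivity $L(1,\pi,\Ad)>0$: this allows me to drop every term of the positive sum $M$ except the one corresponding to the particular $\pi\in\F(\N,2\k)$ of interest, giving
$$\frac{L(1/2,\pi_E\otimes\Omega)}{L(1,\pi,\Ad)}\leq M\ll_{F,E,\k,\eps}|\N|c(\Omega)^{\eps}+c(\Omega)^{1/2+\eps}.$$
Using the standard upper bound $L(1,\pi,\Ad)\ll_{F,\k,\eps}|\N|^{\eps}$ (which follows from the absolutely convergent Dirichlet series at $s=1+\eps$ together with convexity across the critical strip) and noting that the archimedean factors of $L(s,\pi_E\otimes\Omega)$ at $s=1/2$ are bounded purely in terms of $\k$, conversion to the finite $L$-function yields the stated bound
$$L_{fin}(1/2,\pi\times\sigma_\Omega)\ll_{F,E,\k,\eps}|\N|^{1+\eps}c(\Omega)^{\eps}+|\N|^{\eps}c(\Omega)^{1/2+\eps}.$$

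The hard part of this proof is the bound on $I_{reg}(f)$, already carried out in Lemma \ref{lemQ}: a priori the counting set $S(\Omega,\N,\1_\p)$ has size comparable to $c(\Omega)d_{E/F}/|\N|$, so the naive estimate on the regular terms would contribute at the prohibitive level $c(\Omega)^{1+\eps}/|\N|$, producing a bound worse than convexity. The saving comes from the delicate cancellation in the local orbital integrals at places $v\in S(\Omega)$ recorded in Lemma \ref{lemma:finalbound}, each contributing a power of $q_v^{-n(\Omega_v)}$, combined with the pigeonhole counting of Lemma \ref{claim:size}; together they push the total regular contribution down to the barely acceptable $c(\Omega)^\eps/|\N|$ that makes the hybrid subconvex estimate work.
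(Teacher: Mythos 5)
Your argument reproduces the paper's proof in all essential respects: you take $f_\p=\1_\p$, invoke Proposition \ref{propspectral} to relate $I(f)$ to the first moment $M$, bound $I_{irreg}(f)$ via Proposition \ref{propirreg}, bound $I_{reg}(f)$ via Lemma \ref{lemQ}, solve for $M$, and then deduce the pointwise bound from non-negativity of the central value and the standard estimate $L(1,\pi,\Ad)\ll_{\k,\eps}|\N|^\eps$.

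The one place you diverge, and it is minor, is in handling the residual spectral term: the paper observes that $C(\k,\Omega,\1_\p)\geq 0$, so the entire residual contribution is non-negative and one simply has the inequality $I(f)\geq\text{(cuspidal part)}$; you instead estimate the residual term by $\ll_{F,E,\k}|\N|^{-1+\eps}$ and absorb it into the error. Both work and give the same final bound, since upon multiplying through by $|\N|\sqrt{c(\Omega)}\cdot c(\Omega)^\eps$ the extra term produces $|\N|^\eps c(\Omega)^{1/2+\eps}$, which is already in the target estimate; the paper's use of positivity is just slightly cleaner. Your additional constraint $\p\nmid\c(\Omega)$ is also unnecessary (only $\p\nmid\N$ is required, and $|\I(\1_\p)|\leq 2$ regardless), but harmless. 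Everything else, including your identification that the real work lies in Lemma \ref{lemQ}'s use of the local savings $q_v^{-n(\Omega_v)}$ and the pigeonhole count in Lemma \ref{claim:size}, matches the paper precisely.
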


\begin{proof}
We note that with the weight $\k$ fixed there are only finitely many possibilities for the
archimedean type of $\Omega$, hence it suffices to prove the same bound for the completed
$L$-function.

We take $f_\p=\1_\p$ then from the spectral expansion for $I(f)$ we have, by Proposition
\ref{propspectral},
\begin{align} \label{equation:spec}
I(f) \geq \frac{ L_{S(\Omega)}(1, \eta)^2}{2 |\Delta_F|^2\sqrt{d_{E/F}c(\Omega)}} \frac{4^{[F:\Q]}}{|\N|}  \binom{2\k - 2}{\k + \m - 1}
\sum_{\pi\in \F(\N, 2\k)} \frac{L(1/2, \pi_E\otimes\Omega)}{L(1, \pi, Ad)}.
\end{align}

On the geometric side, we recall that we have written
\begin{align} \label{eqn:irregandreg}
I(f) = I_{irreg}(f) + I_{reg}(f)
\end{align}
in Section
\ref{globalcalculations}. By Proposition \ref{propirreg} we have,
\begin{align}\label{equation:geomirreg}
|I_{irreg}(f)| \ll_{F,E, \eps} \frac{c(\Omega)^\eps}{\sqrt{c(\Omega)}},
\end{align}
and by Lemma \ref{lemQ} we have, for all $\eps > 0$,
\begin{align}\label{equation:geomreg}
|I_{reg}(f)| \ll_{F, E,\k, \eps} \frac{c(\Omega)^\epsilon}{|\N|}.
\end{align}
Hence, combining  \eqref{equation:spec}, \eqref{eqn:irregandreg}, \eqref{equation:geomirreg} and \eqref{equation:geomreg}, and noting that
\[
\frac{1}{L_{S(\Omega)}(1, \eta)} \leq 2^{2|S(\Omega)|} \leq \sigma_0(c(\Omega))^{2[F:\Q]} \ll_\eps c(\Omega)^\eps,
\]
we get,
\begin{align}\label{equation:both}
\sum_{\pi\in\F(\N, 2\k)} \frac{L(1/2, \pi_E\otimes\Omega)}{L(1, \pi, Ad)} \ll_{F, E, \k, \eps} |\N|c(\Omega)^\eps+
c(\Omega)^{\frac{1}{2}+\epsilon}.
\end{align}

Now using positivity of $L(1/2, \pi_E\otimes\Omega)$, which is clear from the period formula, we have
\begin{align}\label{equation:pos}
\frac{L(1/2, \pi_E\otimes\Omega)}{L(1, \pi, Ad)} \leq \sum_{\pi\in\F(\N, 2\k)} \frac{L(1/2,
\pi_E\otimes\Omega)}{L(1, \pi, Ad)}
\end{align}
for any $\pi \in \F(\N, 2\k)$. We now use that for all $\eps>0$,
\begin{align}\label{equation:ad}
L(1, \pi, Ad) \ll_{\k, \eps}  |\N|^{\eps};
\end{align}
cf \cite[Theorem 5.41]{iwaniec:2004}.
Hence by \eqref{equation:both}, \eqref{equation:pos} and \eqref{equation:ad} we have, for all
$\eps>0$,
\[
L(1/2, \pi_E\otimes\Omega) \ll_{\k,\eps}  |\N|^{\eps} \frac{L(1/2, \pi_E\otimes\Omega)}{L(1, \pi,
Ad)}\ll_{F, E, \k, \eps}  |\N|^{1+\eps}c(\Omega)^\eps+|\N|^\eps c(\Omega)^{\frac{1}{2}+\epsilon}.
\]
\end{proof}

Finally we explicate how this Theorem gives a subconvex bound as $\N$ and $\Omega$ vary in certain ranges.

\begin{corollary}
For $0 \leq t < \frac 16$ and $\eps > 0$,
\[
L_{fin}(1/2, \pi \times \sigma_\Omega)\ll_{F, E, \k, \eps} (c(\Omega)|\N|)^{\frac{1}{2}-t},
\]
for $\pi\in\F(\N, 2\k)$; with $\N$ and $\Omega$ satisfying the conditions of Theorem \ref{thmsub} and
\[
c(\Omega)^{\frac{2 t + \eps}{1-(2 t + \eps)}}\leq |\N|\leq c(\Omega)^{\frac{1-(2t+\eps)}{1+2 t + \eps}}.
\]
\label{corsub}
\end{corollary}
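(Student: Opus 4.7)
The proof is a direct optimization of the hybrid bound in Theorem \ref{thmsub}. First I would invoke that theorem with the parameter $\eps/2$ in place of its $\eps$, producing the estimate
\[
L_{fin}(1/2, \pi\times\sigma_\Omega) \ll_{F,E,\k,\eps} |\N|^{1+\eps/2}c(\Omega)^{\eps/2} + |\N|^{\eps/2}c(\Omega)^{1/2+\eps/2},
\]
and then I would determine precisely for which $(|\N|, c(\Omega))$ each of the two summands is dominated by $(c(\Omega)|\N|)^{1/2-t}$.

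For the irregular contribution, the requirement $|\N|^{1+\eps/2}c(\Omega)^{\eps/2} \leq (c(\Omega)|\N|)^{1/2-t}$ is equivalent, after rearranging exponents, to $|\N|^{1/2+t+\eps/2} \leq c(\Omega)^{1/2-t-\eps/2}$, that is
\[
|\N| \leq c(\Omega)^{\frac{1-(2t+\eps)}{1+(2t+\eps)}},
\]
which is exactly the upper bound in the statement. For the regular contribution, $|\N|^{\eps/2}c(\Omega)^{1/2+\eps/2} \leq (c(\Omega)|\N|)^{1/2-t}$ rearranges to $c(\Omega)^{t+\eps/2} \leq |\N|^{1/2-t-\eps/2}$, equivalently
\[
|\N| \geq c(\Omega)^{\frac{2t+\eps}{1-(2t+\eps)}},
\]
which is the stated lower bound. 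Combining both inequalities then yields $L_{fin}(1/2,\pi\times\sigma_\Omega) \ll_{F,E,\k,\eps} (c(\Omega)|\N|)^{1/2-t}$ on the allowed range.

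The only remaining point is that the interval for $|\N|$ is non-empty, which requires $\frac{2t+\eps}{1-(2t+\eps)} \leq \frac{1-(2t+\eps)}{1+(2t+\eps)}$. Setting $u = 2t+\eps$ and clearing denominators, this reduces to $u(1+u) \leq (1-u)^2$, i.e.\ $u \leq 1/3$. Since $t < 1/6$ one may always shrink $\eps$ so that $2t+\eps \leq 1/3$, making the range non-empty; shrinking $\eps$ only improves the implied constant, so there is no loss. There is no serious obstacle in this argument: the corollary is just the result of matching the exponents in the two-term hybrid bound of Theorem \ref{thmsub} against the target $(c(\Omega)|\N|)^{1/2-t}$, with $1/6$ appearing as the threshold at which the two regions of validity (upper and lower bound on $|\N|$) first fail to overlap.
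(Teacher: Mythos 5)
Your proposal is correct and follows essentially the same route as the paper: invoke Theorem \ref{thmsub}, require each of the two summands to be bounded by $(c(\Omega)|\N|)^{1/2-t}$, solve for the admissible range of $|\N|$, and check that the range is non-empty exactly when $t < \tfrac{1}{6}$. Your bookkeeping is in fact slightly cleaner than the paper's: by applying Theorem \ref{thmsub} with $\eps/2$ you land exactly on the exponents $\frac{2t+\eps}{1-(2t+\eps)}$ and $\frac{1-(2t+\eps)}{1+(2t+\eps)}$ stated in the corollary, whereas the paper plugs in $\eps$ directly and ends up with $2\eps$ where the statement has $\eps$ (a harmless relabeling). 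Your explicit derivation of the non-emptiness threshold $2t+\eps \leq \tfrac{1}{3}$, and the remark that one may always shrink $\eps$, makes precise what the paper only asserts.
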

\begin{proof}
To beat convexity we need,
\[
|\N|^{1+\eps}c(\Omega)^\eps \leq |\N|^{\frac 12 - t}  c(\Omega)^{\frac 12 - t}
\]
and
\[
|\N|^{\eps}c(\Omega)^{\frac 12 + \eps } \leq |\N|^{\frac 12 - t} c(\Omega)^{\frac 12 - t}.
\]
Thus we need
\[
c(\Omega)^{\frac{2\eps +2t}{1 -2t -2\eps}} \leq |\N| \leq c(\Omega)^{\frac{1-2t-2\eps}{1+2t+2\eps}}.
\]
We note that the range for $|\N|$ is
non-empty provided $\frac 16>t\geq 0$.
\end{proof}

\subsection{Classical reformulation}
\label{classical}

To finish we work out Theorem \ref{thmlargelevel} in the case $F=\Q$ classically. We fix an imaginary quadratic field $E =
\Q(\sqrt{-d})$ of discriminant $-d$ and let $\chi_{-d} = \left( \frac{-d}{\cdot} \right)$ denote the
associated quadratic Dirichlet character.

Let $N$ be a squarefree integer which is the product of an odd number of primes $p$ satisfying
$\chi_{-d}(p) = - 1$. For a positive integer $k$ we denote by $\F(N, 2k)$ the finite set of normalized
newforms of level $N$, weight $2k$, trivial nebentypus and which are eigenforms for all the Hecke
operators. On $\F(N, 2k)$ we take the Petersson inner product defined by
$$
(f, f) = \int_{\Gamma_0(N)\bs\H} |f(x + iy)|^2 \ y^{2k} \ \frac{dx \ dy}{y^2}.
$$
We note that,
$$
L(1, \pi_f, Ad) = \frac{2^{2k}}{N} (f, f)
$$
where $\pi_f$ denotes the automorphic representation of $\GL(2, \A_\Q)$ generated by $f$. We denote
by $L(s, f)$ the completed $L$-function of $f$, which has a functional equation relating the value
at $s$ to $2k - s$.

We now fix a character $\Omega : E^\times \bs \A_E^\times \to \C$ whose restriction to
$\A_\Q^\times$ is trivial. At infinity we have
$$
\Omega_\infty : z \mapsto \left( \frac{z}{\overline{z}} \right)^m,
$$
with $m\in\Z$. We recall that when $\Omega$ does not factor through the norm map $N : \A_E^\times
\to \A_\Q^\times$ there is a modular form $g_\Omega$ of level $d c(\Omega)$, weight $2|m| + 1$ and
nebentypus $\chi_{-d}$ such that
$$
L(s, g_\Omega) = L(s, \Omega).
$$
For $f \in \F(N, 2k)$ we let $L(s, f \times g_\Omega)$ denote the completed Rankin-Selberg
$L$-function which satisfies a functional equation relating the value at $s$ to $2k + 2|m| + 1 - s$.

We recall the well known facts that for the completed $L$-functions, $L(2, 1_\Q) = \frac{\pi}{6}$ and
$$
L(1, \chi_{-d}) = \frac{h_{-d}}{u_{-d} \sqrt{d}},
$$
where $h_{-d}$ denotes the class number of $E$ and $u_{-d} = \# \O_E^\times/\{\pm1\}$.

Taking into account that the gamma factor for  $L(k, f)$ is $2 (2\pi)^{-k}\Gamma(k)$ and the gamma factor for $L(k+|m|+\frac 12, f\times g_\Omega)$ is $4(2\pi)^{-2k}\Gamma(k+m)\Gamma(k-m)$, we now apply Theorem \ref{thmlargelevel} to get the following following averages for the finite parts of the $L$-functions.

\begin{theorem}
Let $N$ be a squarefree integer as above with $N > d$. Then we have
\[
\frac{u_{-d}  \sqrt{d}}{8 \pi^2} \sum_{f\in\F(N, 2)} \frac{L_{fin}(1, f) L_{fin}(1, f \otimes \chi_{-d})}{(f,
f)} = h_{-d} \left( 1-\frac{12h_{-d}}{u_{-d}  \phi(N)} \right),
\]
where $\phi$ denotes the Euler totient function. When $k > 1$ we have
\[
\frac{(2k-2)! u_{-d}\sqrt{d} }{2\pi(4\pi)^{2k-1}} \sum_{f\in\F(N, 2k)} \frac{L_{fin}(k, f) L_{fin}(k,
f\otimes\chi_{-d})}{(f, f)} = h_{-d}.
\]
For a character $\Omega$ as above which does not factor through the norm we get for $N > d c(\Omega)$,
\[
\frac{(2k-2)!u_{-d} \sqrt{d}L_{S(\Omega)}(1, \chi_{-d})}{2\pi (4\pi)^{2k-1}}  \sum_{f\in\F(N, 2k)}
\frac{L_{fin}(k + |m| + \frac 12, f \times g_\Omega)}{(f, f)} = h_{-d}.
\]
\label{thmclassical}
\end{theorem}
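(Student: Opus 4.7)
The plan is to apply Theorem \ref{thmlargelevel} with $F = \Q$, $E = \Q(\sqrt{-d})$, and the trivial Hecke test function $f_\p = \1_\p$ at an auxiliary prime $\p$ not dividing $N c(\Omega)$, then translate each adelic quantity into its classical counterpart. With these choices $[F:\Q] = 1$, $|\Delta_F| = 1$, $|\Delta_E| = d_{E/F} = d$, $\eta = \chi_{-d}$, $L(2, 1_\Q) = \pi/6$, and $\hat{f}_\p(\pi_\p) = 1$ for every unramified $\pi_\p$. Because $\Omega$ is unramified at $\p$, Section \ref{distributionI} gives $\int_{-2}^{2} \mu_{\p, E, \Omega} = L(1, \eta_\p)$, so that $L^{S(\Omega)\cup\{\p\}}(1, \eta) \cdot L(1, \eta_\p) = L^{S(\Omega)}(1, \eta)$, eliminating the dependence on the auxiliary prime on the spectral side. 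The dictionary between adelic and classical $L$-values rests on three inputs: the relation $L(1, \pi_f, \Ad) = 2^{2k}(f, f)/N$ converting the adjoint $L$-value into the Petersson norm; the gamma-factor identities $L(s, f) = 2(2\pi)^{-s}\Gamma(s) L_{fin}(s, f)$ and the formula for the Rankin-Selberg gamma factor recorded in the excerpt; and the class number formula $L(1, \chi_{-d}) = h_{-d}/(u_{-d}\sqrt d)$ for the completed Dirichlet $L$-value.

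I will treat the three identities as separate cases. For the second and third formulas (weight $2k > 2$ with $\Omega$ trivial, and $\Omega$ not factoring through the norm) Lemma \ref{lemcont} forces $C(\k, \Omega, \1_\p) = 0$, so the residual term vanishes and Theorem \ref{thmlargelevel} reduces to the simplified formula of Theorem \ref{thmintro1}. After substituting the gamma factors and using the factorization $L(s, \pi_E\otimes\Omega) = L(s, \pi \times \sigma_\Omega)$ (which for trivial $\Omega$ splits further as $L(s, \pi) L(s, \pi\otimes\chi_{-d})$), the collapse of the binomial coefficient against the $\Gamma$-factors via $\binom{2k-2}{k+m-1}\Gamma(k+m)\Gamma(k-m) = (2k-2)!$ produces the desired classical form. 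The partial $L$-factor $L^{S(\Omega)}(1, \chi_{-d})$ appearing adelically is rewritten using $L(1, \chi_{-d}) = L^{S(\Omega)}(1, \chi_{-d}) \cdot L_{S(\Omega)}(1, \chi_{-d})$ to produce the subscripted factor $L_{S(\Omega)}(1, \chi_{-d})$ appearing in the denominator of the third classical formula.

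The first identity (weight 2, $\Omega$ trivial) is the only case requiring the residual contribution, since $\Omega = 1$ trivially factors through the norm via the trivial unramified quadratic character $\delta = 1$ of $F^\times\bs\A_F^\times$, and $k = 1$ satisfies the weight hypothesis of Lemma \ref{lemcont}. As $E = \Q(\sqrt{-d})$ is always ramified at infinity, the ``otherwise'' clause of that lemma applies and gives $C(\k, \Omega, \1_\p) = \hat{\1}_\p(1) = 1$. The subtracted term of Theorem \ref{thmlargelevel} then evaluates, using $\prod_{p \mid N}(p-1)^{-1} = 1/\phi(N)$ for squarefree $N$, the class number formula, and $L(2, 1_\Q) = \pi/6$, to $48 h_{-d}^2 / (u_{-d}^2 \sqrt d\, \phi(N))$, which after combining with the main term $4 L(1, \chi_{-d}) = 4 h_{-d}/(u_{-d}\sqrt d)$ produces the claimed correction factor $1 - 12 h_{-d}/(u_{-d}\phi(N))$.

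The main difficulty is entirely arithmetic bookkeeping, tracking volume factors, gamma factors, and the auxiliary prime $\p$ through the dictionary. The conceptual inputs from Theorem \ref{thmlargelevel} and Lemma \ref{lemcont} are already in hand, and the hypothesis $N > d c(\Omega)$ (respectively $N > d$ when $\Omega$ is unramified) is precisely the range in which Lemma \ref{lemglobalreg} guarantees that the regular orbital integrals vanish, making the resulting identity exact rather than asymptotic.
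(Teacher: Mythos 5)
Your proposal is correct and follows the approach the paper intends (the paper itself provides no detailed proof, stating only that one should apply Theorem \ref{thmlargelevel} together with the listed facts about gamma factors, $L(1,\pi_f,\Ad)$, and the class number formula). Your careful unpacking reproduces exactly that dictionary, including the key observations that $L^{S(\Omega)\cup\{\p\}}(1,\eta)\,L(1,\eta_\p) = L^{S(\Omega)}(1,\eta)$ cancels the auxiliary prime, that $C(\k,\Omega,\1_\p)$ vanishes in the second and third identities and equals $1$ in the first via the ``otherwise'' branch of Lemma \ref{lemcont} (since $E/\Q$ is ramified), that the binomial coefficient collapses against $\Gamma(k+m)\Gamma(k-m)$ to give $(2k-2)!$, and that $\delta(\N) = 0$ because $N > 1$. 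A minor remark worth noting: Lemma \ref{lemglobalreg} guarantees the vanishing of the regular terms under $|\N| \geq d_{E/F}\,c(\Omega)$ (not strictly greater than), so the hypothesis $N > d c(\Omega)$ in the classical statement is slightly more restrictive than strictly necessary; this is a cosmetic point and does not affect the argument.
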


We note that when the level is prime the first part of this Theorem agrees with Duke's asymptotic result \cite[Proposition 2]{duke:1995} and the first and second parts agree with Michel and Ramakrishnan's exact formula \cite{ramakrishnan:exact}. One can see \cite[p.5]{ramakrishnan:exact} for explicit examples verifying that this formula agrees with known data.

\providecommand{\bysame}{\leavevmode\hbox to3em{\hrulefill}\thinspace}
\providecommand{\MR}{\relax\ifhmode\unskip\space\fi MR }
% \MRhref is called by the amsart/book/proc definition of \MR.
\providecommand{\MRhref}[2]{%
  \href{http://www.ams.org/mathscinet-getitem?mr=#1}{#2}
}
\providecommand{\href}[2]{#2}

\end{document}